\numberwithin{equation}{section}
\newtheorem{theorem}{Theorem}[section]
\newtheorem{corollary}[theorem]{Corollary}
\newtheorem{lemma}[theorem]{Lemma}
\newtheorem{proposition}[theorem]{Proposition}
\theoremstyle{definition}
\newtheorem{definition}[theorem]{Definition}
\newtheorem{example}[theorem]{Example}
\theoremstyle{remark}
\DeclareMathOperator{\diam}{diam}
\DeclareMathOperator{\dist}{dist}
\newcommand{\N}{\mathbb{N}}
\newcommand{\R}{\mathbb{R}}
\def\dist{{\mathop\mathrm{\,dist\,}}}
\def\bint{{\ifinner\rlap{\bf\kern.35em--}
\int\else\rlap{\bf\kern.45em--}\int\fi}\ignorespaces}
\def\bbint{{\ifinner\rlap{\bf\kern.35em--}
\hspace{0.078cm}\int\else\rlap{\bf\kern.45em--}\int\fi}\ignorespaces}
\def\diam{{\mathop\mathrm{\,diam\,}}}
\def\bint{{\ifinner\rlap{\bf\kern.35em--}
\int\else\rlap{\bf\kern.45em--}\int\fi}\ignorespaces}
\begin{document}

\title[Strong $BV$-extension and $W^{1,1}$-extension domains]{Strong $BV$-extension and $W^{1,1}$-extension domains}

\author{Miguel Garc\'ia-Bravo}
\author{Tapio Rajala}

\address{University of Jyvaskyla \\
         Department of Mathematics and Statistics \\
         P.O. Box 35 (MaD) \\
         FI-40014 University of Jyvaskyla \\
         Finland}
         
\email{miguel.m.garcia-bravo@jyu.fi}         
\email{tapio.m.rajala@jyu.fi}

\thanks{The authors acknowledge the support from the Academy of Finland, grant no. 314789.}
\subjclass[2000]{Primary 46E35.}
\keywords{Sobolev extension, BV-extension}
\date{\today}


\begin{abstract}
We show that a bounded domain in a Euclidean space is a $W^{1,1}$-extension domain if and only if it is a strong $BV$-extension domain. In the planar case, bounded and strong $BV$-extension domains are shown to be exactly those $BV$-extension domains for which the set $\partial\Omega \setminus \bigcup_{i} \overline{\Omega}_i$ is purely $1$-unrectifiable, where $\Omega_i$ are the open connected components of $\R^2\setminus\overline{\Omega}$.
\end{abstract}


\maketitle

\tableofcontents

\section{Introduction}

Let $\Omega\subset\R^n$ be a domain for some $n\geq 2$. For every $1\leq p\leq \infty$, we define the Sobolev space $W^{1,p}(\Omega)$ to be
$$W^{1,p}(\Omega)=\{u\in L^p(\Omega):\, \nabla u\in L^p(\Omega;\R^n)\}, $$
where $\nabla u$ denotes the distributional gradient of $u$. We equip this space with the non-homogeneous norm
$$\|u\|_{W^{1,p}(\Omega)}= \|u\|_{L^p(\Omega)}+\|\nabla u\|_{L^p(\Omega)}. $$
We say that $\Omega$ is a $W^{1,p}$-extension domain if there exists an  operator $T\colon W^{1,p}(\Omega)\to W^{1,p}(\R^n)$ and a constant $C>0$ so that 
$$ \|Tu\|_{W^{1,p}(R^n)}\leq C \|u\|_{W^{1,p}(\Omega)}$$
and $Tu|_\Omega = u$ for every $u\in W^{1,p}(\Omega)$. 
We denote the minimal constant $C$ above by $\|T\|$.
We point out that by the results from \cite{HKT2008,S2006}, for $p>1$ one can always assume the operator $T$ to be linear, and also for the case of bounded simply connected planar domains if $p=1$ by \cite{KRZ}. It is not yet known if this is the case for general domains when $p=1$.

It is well-known from the works of Calder\'on and Stein \cite{cal1961,stein} that Lipschitz domains are $W^{1,p}$-extension domains for every $p\geq 1$. Moreover, Jones showed in \cite{jo1981} that
every uniform domain $\Omega\subset \R^n$ is a $W^{1,p}$-extension domain for all $p\geq 1$. However, these conditions are not necessary for a domain to be a Sobolev extension domain. For bounded simply connected planar domains a geometric characterization of Sobolev extension domains by means of a curve condition has been given in the works \cite{sh2010,KRZ2015,KRZ}. Namely, for the $W^{1,1}$ case we have the following: A bounded planar simply connected domain $\Omega$ is a $W^{1,1}$-extension domain if and only if for every $x,y\in \Omega^c$ there exists a curve  $ \gamma \subset \Omega^c$ connecting $x$ and $y$ with 
\begin{equation}\label{eq:curvep1}
\ell(\gamma) \le C|x-y|, \text{ and } \mathcal H^1(\gamma \cap \partial \Omega) = 0.
\end{equation}

A typical example of a simply connected planar domain $\Omega$ which is not a $W^{1,p}$-extension domain for any $p\geq 1$ is the slit disk $D=\{(x,y)\in\R^2:\, x^2+y^2<1\}\setminus ([0,1)\times \{0\})\}$. However, by the results of \cite{KMS2010}, knowing that the complement is quasiconvex is enough to ensure that $D$ is a $BV$-extension domain.

Recall that
$$BV(\Omega)=\{u\in L^1(\Omega):\, \| D u\|(\Omega)<\infty\}$$
is the space of functions of bounded variation where
$$\Vert D u\Vert (\Omega)=\sup\left\{\int_{\Omega}u \,\text{div} (v)\, dx:\, v\in C^{\infty}_{0}(\Omega;\R^n) ,\, |v|\leq 1\right\}$$
denotes the total variation of $u$ on $\Omega$.
We endow this space with the norm $ \Vert u\Vert_{BV(\Omega)}=\Vert u\Vert_{L^1(\Omega)}+\Vert D u\Vert (\Omega).$ Note that $\|Du\|$ is a Radon measure on $\Omega$ that is defined for every set $F\subset\Omega$ as \[
\|Du\|(F)=\inf\{\|Du\|(U):\, F\subset U\subset\Omega,\;U\;\text{open}\}.
\]

We say that $\Omega$ is a $BV$-extension domain if there exists a constant $C>0$ and a (not necessarily linear) extension operator $T\colon BV(\Omega)\to BV(\R^n)$ so that $Tu|_{\Omega}=u$ and 
$$\|Tu\|_{BV(\R^n)}\leq C\|u\|_{BV(\Omega)} $$
for all $u\in BV(\Omega)$ and where $C>0$ is an absolute constant, independent of $u$.
Let us point out that $\Omega$ being a $W^{1,1}$-extension domain always implies that it is also a $BV$-extension domain (see \cite[Lemma 2.4]{KMS2010}).

Our first main result is the characterization of bounded $W^{1,1}$-extension domains in terms of strong extendability of $BV$-functions, or equivalently, in terms of strong extendability of sets of finite perimeter.
The equivalence between strong extendability of $BV$-functions and strong extendability of sets of finite perimeter is inspired by the work of Mazy'a and Burago \cite{BM1967} (see also \cite[Section 9.3]{mazya}). They showed that for all $u\in L^{1}_{\text{loc}}(\Omega)$ with finite total variation we may find an extension $Tu\in L^{1}_{\text{loc}}(\R^n)$ with $\|D(Tu) \|(\R^n) \leq C \|Du\|(\Omega)$, for some constant $C>0 $, if and only if any set $E\subset\Omega$ of finite perimeter in $\Omega$ admits an extension $\widetilde E\subset\R^n$ satisfying $\widetilde E\cap\Omega=E$ and $P(\widetilde E,\R^n)\leq C P(E,\Omega)$ where $C>0$ is some constant. Recall that a Lebesgue measurable subset $E\subset \R^n$ has finite perimeter in $\Omega$ if $\chi_E\in BV(\Omega)$, where $\chi_E$ denotes the characteristic function of the set $E$. We set $P(E,\Omega)=\|D \chi_E\|(\Omega)$ and call it the perimeter of $E$ in $\Omega$. If a set $E$ does not have finite perimeter in $\Omega$ we set $P(E,\Omega)=\infty$.

Before stating our characterization, we introduce the terminology of strong extendability, following  \cite{HKLL2016} and  \cite{L2015}.

\begin{definition}[Strong $BV$-extension domain]\label{def:strongBV}
A domain $\Omega \subset \mathbb R^n$ is called a \emph{strong $BV$-extension domain} if there exists a constant $C>0$ so that for any
$u \in BV(\Omega)$ there exists $Tu \in BV(\mathbb R^n)$ with $Tu|_\Omega = u$,
$\|Tu\|_{BV(\mathbb R^n)} \le C\|u\|_{BV(\Omega)}$, and $\|D(Tu)\|(\partial\Omega) = 0$.
\end{definition}

In the spirit of Definition \ref{def:strongBV}, we define the analogous concept for sets of finite perimeter.

\begin{definition}[Strong extension property for sets of finite perimeter]\label{def:strongperi}
A domain $\Omega \subset \mathbb R^n$ is said to have the \emph{strong extension property for sets of finite perimeter} if
there exists a constant $C>0$ so that for any set $E\subset \Omega$ of finite perimeter in $\Omega$ there exists a set  $\widetilde E\subset \R^n$ such that
\begin{enumerate}
    \item[(PE1)] $\widetilde E \cap \Omega = E$  modulo measure zero sets,
    \item[(PE2)] $P(\widetilde E,\R^n)\leq C P(E,\Omega) $, and
    \item[(PE3)] $\mathcal{H}^{n-1}(\partial^M \widetilde E \cap \partial \Omega)=0$.
\end{enumerate}
\end{definition}

With the above definitions we can state our first main result.

\begin{theorem}\label{thm:mainRn}
Let $\Omega\subset\R^n$ be a bounded domain. Then the following are equivalent:
\begin{enumerate}
    \item $\Omega$ is a $W^{1,1}$-extension domain.
    \item $\Omega$ is a strong $BV$-extension domain.
    \item $\Omega$ has the strong extension property for sets of finite perimeter.
\end{enumerate}
\end{theorem}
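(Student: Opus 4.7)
My plan is to prove $(2) \Leftrightarrow (3)$ as a Maz'ya-Burago-type correspondence based on the coarea formula and the layer-cake decomposition, then establish $(1) \Leftrightarrow (2)$ by Meyers-Serrin approximation in the direction $(1)\Rightarrow(2)$ and by a Whitney mollification of a strong $BV$-extension outside $\Omega$ in the direction $(2)\Rightarrow(1)$. The implication $(2)\Rightarrow(1)$ is the real core of the theorem; the coarea-based correspondence $(2)\Leftrightarrow(3)$ provides flexibility to pass between functions and sets, and in particular allows me to replace strong $BV$-extensions by perimeter extensions and vice versa.

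For $(3)\Rightarrow(2)$, given $u\in BV(\Omega)$ I extend every superlevel set $E_t = \{u>t\}$ to $\widetilde{E_t}\subset\R^n$ satisfying (PE1)-(PE3), and define
\[
Tu(x) \;=\; \int_0^\infty \chi_{\widetilde{E_t}}(x)\,dt \;-\; \int_{-\infty}^0 \bigl(1-\chi_{\widetilde{E_t}}(x)\bigr)\,dt.
\]
Fubini yields $Tu|_\Omega = u$ a.e.; the superposition inequality $\|D(Tu)\|(A)\le \int \|D\chi_{\widetilde{E_t}}\|(A)\,dt$, valid for any Borel $A\subset\R^n$, combined with (PE2) gives the norm bound, while choosing $A=\partial\Omega$ together with Federer's identity $\|D\chi_{\widetilde{E_t}}\|(\partial\Omega) = \mathcal H^{n-1}(\partial^*\widetilde{E_t}\cap\partial\Omega)$ and the inclusion $\partial^*\widetilde{E_t} \subseteq \partial^M\widetilde{E_t}$ (mod $\mathcal H^{n-1}$-null) together with (PE3) give $\|D(Tu)\|(\partial\Omega) = 0$. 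The converse $(2)\Rightarrow(3)$ is an analogous truncation argument: extend $\chi_E$ via strong $BV$, truncate to $[0,1]$, and use coarea to pick $t\in(0,1)$ with $P(\{v>t\},\R^n)\lesssim \|Dv\|(\R^n)$ and $\mathcal H^{n-1}(\partial^M\{v>t\}\cap\partial\Omega)=0$; the $L^1$ contribution $|E|$ in $\|\chi_E\|_{BV(\Omega)}$ is absorbed into $P(E,\Omega)$ via the relative isoperimetric inequality on bounded $BV$-extension domains, after replacing $E$ by $\Omega\setminus E$ if necessary.

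For $(1)\Rightarrow(2)$, I approximate $u\in BV(\Omega)$ by Meyers-Serrin smooth functions $u_k\in W^{1,1}(\Omega)$ with $u_k\to u$ in $L^1(\Omega)$ and $\int|\nabla u_k|\,dx\to\|Du\|(\Omega)$, extend via the Sobolev extension to $v_k = Tu_k\in W^{1,1}(\R^n)$, and extract a weak-$*$ limit $v\in BV(\R^n)$ with $v|_\Omega = u$ and the norm bound from lower semicontinuity. Because $|\partial\Omega| = 0$ for $W^{1,1}$-extension domains, each $|Dv_k|(\partial\Omega) = 0$. To pass this to the limit I analyze $|Dv|$ on open neighborhoods $U_\delta = \{\dist(\cdot,\partial\Omega)<\delta\}$: lower semicontinuity gives $|Dv|(U_\delta) \le \liminf_k|Dv_k|(U_\delta)$, and the strict $BV$-convergence of $u_k$ inside $\Omega$, combined with a tightness estimate for $\{|Dv_k|\}$ outside $\overline\Omega$ coming from the boundedness of $T$ and the specific Meyers-Serrin construction, forces $\liminf_k|Dv_k|(U_\delta)\to 0$ as $\delta\to 0$, so $|Dv|(\partial\Omega)=0$.

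The core implication $(2)\Rightarrow(1)$ is as follows. Given $u\in W^{1,1}(\Omega)$, use the strong $BV$-extension to obtain $v\in BV(\R^n)$ with $v|_\Omega = u$ and $\|Dv\|(\partial\Omega)=0$. On $\R^n\setminus\overline\Omega$ the measure $Dv$ may still have a singular part, which I eliminate by a Whitney mollification: decompose $\R^n\setminus\overline\Omega$ into cubes $\{Q_i\}$ with $\ell(Q_i)\simeq\dist(Q_i,\partial\Omega)$, mollify $v$ at a scale $\varepsilon_i\ll\ell(Q_i)$ inside each $Q_i$, and patch with a partition of unity subordinate to mildly enlarged cubes $Q_i^*$. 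The result $w$ is smooth on $\R^n\setminus\overline\Omega$ with $\|\nabla w\|_{L^1}\le C\|Dv\|(\R^n\setminus\overline\Omega)$, and because the mollification scales vanish at $\partial\Omega$, $w$ inherits the outer boundary trace of $v$, which by $\|Dv\|(\partial\Omega)=0$ matches the inner trace of $u$. The glued function $\widetilde Tu := u\chi_\Omega + w\chi_{\R^n\setminus\overline\Omega}$ therefore lies in $W^{1,1}(\R^n)$ with the desired norm bound. The hardest step is precisely this trace-matching: one must verify that the Whitney regularization introduces no new jump mass on $\partial\Omega$, despite $\partial\Omega$ possibly being $\mathcal H^{n-1}$-positive and geometrically irregular. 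This reduces to a local estimate controlling both $\|\nabla w\|_{L^1(Q_i)}$ and any trace defect of $w-v$ by $|Dv|(Q_i^*)$, and it is precisely the strong assumption $\|Dv\|(\partial\Omega)=0$ that closes the argument.
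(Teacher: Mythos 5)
Your core implication $(2)\Rightarrow(1)$ is essentially the paper's argument: a Whitney-type regularization of the strong $BV$-extension on $\R^n\setminus\overline\Omega$, with the decisive point being that the regularization creates no new variation on $\partial\Omega$ (a local estimate of the defect $w-v$ in terms of $\|Dv\|$ on enlarged Whitney cubes, combined with $\|Dv\|(\partial\Omega)=0$). That outline is correct and you have identified the right difficulty. Two of your other implications, however, have genuine gaps.

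First, in $(3)\Rightarrow(2)$ the layer-cake formula $Tu(x)=\int_0^\infty\chi_{\widetilde{E_t}}(x)\,dt-\int_{-\infty}^0\bigl(1-\chi_{\widetilde{E_t}}(x)\bigr)\,dt$ and the superposition inequality $\|D(Tu)\|(A)\le\int\|D\chi_{\widetilde{E_t}}\|(A)\,dt$ both presuppose that $(t,x)\mapsto\chi_{\widetilde{E_t}}(x)$ is jointly measurable. The sets $\widetilde{E_t}$ are obtained by an arbitrary selection, one for each $t$, from the family of admissible strong perimeter extensions of $E_t$, and nothing guarantees this selection is measurable in $t$; without that, $Tu$ need not even be a measurable function and Fubini does not apply. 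This is not a formality: the paper explicitly flags this obstruction and spends the bulk of that implication circumventing it, building $Tu$ as an $L^1$-limit of simple functions $u_m=\sum_j 2^{-l_m}\chi_{\widetilde E_{t_j^m}}$ whose finitely many levels $t_j^m$ are chosen (via the sets $I_k^m$ and $J^m$) so that simultaneously the perimeters are controlled by averages of $P(E_s,\Omega)$ and the mass of $P(\widetilde E_{t_j^m},\cdot)$ near $\partial\Omega$ is uniformly small, the latter being needed to get $\|Dv\|(\partial\Omega)=0$ in the limit. You would have to either reproduce such an approximation or supply a measurable selection argument.

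Second, in $(1)\Rightarrow(2)$ the claimed ``tightness estimate for $\{|Dv_k|\}$ outside $\overline\Omega$ coming from the boundedness of $T$'' does not exist: boundedness of $T$ controls only the total mass $\|Dv_k\|(\R^n)$, not where it sits, so the gradient measures of $v_k=Tu_k$ may concentrate on $\partial\Omega$ from outside and the weak-$*$ limit $v$ may charge $\partial\Omega$. The Meyers--Serrin construction lives inside $\Omega$ and gives no leverage on this. Since your cycle has no other route from $(1)$ to $(2)$ or $(3)$, the necessity direction of the theorem is lost. The paper instead proves $(1)\Rightarrow(3)$: it uses that a bounded $W^{1,1}$-extension domain is an $L^{1,1}$-extension domain, applies the Whitney smoothing operator to $\chi_E$ \emph{inside} $\Omega$, extends the resulting Sobolev function, and selects a good superlevel set; condition (PE3) then comes from $|\partial\Omega|=0$ (measure density) together with the smoothing operator's property $\|D(S\chi_E-\chi_E)\|(\partial\Omega)=0$, not from any compactness of an approximating sequence. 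Adopting that route (your smoothing operator from $(2)\Rightarrow(1)$ already does the work) is the natural repair.
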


Our main motivation behind this theorem is to understand better the geometry of $W^{1,1}$-extension domains. From Theorem \ref{thm:mainRn} we see that for a bounded $W^{1,1}$-extension domain, except for a purely $(n-1)$-unrectifiable set, the boundary consists of points where the domain has density at most $1/2$. See Section \ref{sec:corollaries} for the proof of this.
In the same section we give an example showing that the above density bound is not sufficient to imply that a bounded $BV$-extension domain is a $W^{1,1}$-extension domain, even in the plane. 
Another corollary of Theorem \ref{thm:mainRn} is that 
for a bounded $W^{1,1}$-extension domain, again up to a 
purely $(n-1)$-unrectifiable set,
the boundary consists of points that are boundary points also for some component of the interior of the complement of the domain. 
In Section \ref{sec:corollaries} we provide also an example showing that in $\mathbb R^3$ this property does not characterize $W^{1,1}$-extension domains among bounded $BV$-extension domains.
However, our second main result states that in the planar case this is true.

\begin{theorem}\label{thm:planar}
Let $\Omega \subset \mathbb R^2$ be a bounded $BV$-extension domain. Then $\Omega$ is a $W^{1,1}$-extension domain if and only if the set
\[
 \partial \Omega \setminus \bigcup_{i \in I} \overline{\Omega_i}
\]
is purely $1$-unrectifiable, where $\{\Omega_i\}_{i\in I}$ are the connected components of $\mathbb R^2 \setminus \overline{\Omega}$.
\end{theorem}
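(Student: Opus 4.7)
The plan is to use Theorem \ref{thm:mainRn} to reformulate both directions as statements about the strong extension property for sets of finite perimeter (Definition \ref{def:strongperi}).

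For the forward direction, the corollary of Theorem \ref{thm:mainRn} highlighted in the paragraph following that theorem asserts that for a bounded $W^{1,1}$-extension domain in $\R^n$, up to a purely $(n-1)$-unrectifiable set, the boundary consists of points that are boundary points of some component of the interior of the complement. In the planar case $n=2$ this is exactly the statement that $F := \partial \Omega \setminus \bigcup_i \overline{\Omega_i}$ is purely $1$-unrectifiable, so the forward implication is a direct consequence of that corollary and requires no further planar input.

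For the reverse direction, I assume $\Omega$ is a bounded $BV$-extension domain and $F$ is purely $1$-unrectifiable, and I aim to establish the strong extension property for sets of finite perimeter. Fix $E \subset \Omega$ of finite perimeter in $\Omega$ and let $E'$ be its $BV$-extension, so $E' \cap \Omega = E$ modulo measure zero and $P(E',\R^2) \le C\,P(E,\Omega)$. By De Giorgi's structure theorem $\partial^M E'$ is countably $1$-rectifiable, hence $\mathcal H^1(\partial^M E' \cap F) = 0$ is automatic. The remaining task is to modify $E'$ inside each component $\Omega_i$ of $\R^2 \setminus \overline{\Omega}$ to produce $\widetilde E$ satisfying $\widetilde E \cap \Omega = E$, $\mathcal H^1(\partial^M \widetilde E \cap \partial \Omega_i) = 0$ for every $i$, with controlled perimeter. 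The crucial planar-topological input is that each $\Omega_i$ is simply connected when viewed in $\S^2$, being a complementary domain of the continuum $\overline{\Omega}$; this ensures that $\partial \Omega_i \subset \partial \Omega$ carries enough structure to be accessed from a Whitney decomposition of $\Omega_i$.

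The modification inside a single $\Omega_i$ is planned via such a Whitney decomposition: each Whitney cube is assigned the value $0$ or $1$ according to the $\Omega$-side density of $E$ at a nearest point of $\partial \Omega_i$, noting that this density is well-defined and takes values in $\{0,1\}$ at $\mathcal H^1$-a.e.\ point of $\partial \Omega_i$ outside the rectifiable, $\mathcal H^1$-finite set $\partial^* E'$. This choice forces the density of $\widetilde E$ from the $\Omega_i$-side to match the density of $E$ from the $\Omega$-side at $\mathcal H^1$-a.e.\ point of $\partial \Omega_i$, removing $\partial \Omega_i$ from $\partial^M \widetilde E$ up to an $\mathcal H^1$-negligible set and hence yielding (PE3). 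The perimeter added inside $\Omega_i$ is controlled by a standard Whitney perimeter estimate against the $\Omega$-side jump set of $E$ along $\partial \Omega_i$, which is itself absorbed into $P(E',\R^2)$.

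The main obstacle I anticipate is the global perimeter bookkeeping when summed over the countably many components $\Omega_i$, producing an estimate $P(\widetilde E,\R^2) \le C\,P(E,\Omega)$ with $C$ depending only on the $BV$-extension constant. A further subtlety is that a point of $F$ can accumulate infinitely many small $\Omega_j$'s without belonging to the closure of any single one; near such points the contributions from many nearby components must be aggregated and controlled, and this is precisely where the purely $1$-unrectifiable hypothesis on $F$, coupled with the rectifiability of $\partial^M E'$, is essential for keeping the global sum $\mathcal H^1$-negligible.
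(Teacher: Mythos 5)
Your forward direction coincides with the paper's: it is exactly Corollary \ref{cor:unrectifiabilitynecessity}, and your observation that $\mathcal H^1\bigl(\partial^M E' \cap (\partial\Omega\setminus\bigcup_i\overline{\Omega_i})\bigr)=0$ follows from De Giorgi rectifiability of $\partial^M E'$ together with the pure $1$-unrectifiability hypothesis is also precisely how the paper disposes of that part of the boundary in the converse. Your converse construction, however, departs from the paper's and, as stated, has genuine gaps. The paper decomposes $\partial^M E'$ into rectifiable Jordan curves (Theorem \ref{thm:planardecomposition}), proves that each $\overline{\Omega_i}$ is quasiconvex with a uniform constant \emph{because} $\Omega$ is a $BV$-extension domain (Lemma \ref{lem:quasi.comp.domains}), and then replaces each arc of $\partial E'$ meeting $\partial\Omega_i$ in substantial measure by a short curve pushed into the open set $\Omega_i$ (Lemmas \ref{lem:quasi.int.Omega_i} and \ref{lma:planarpushing}); the length bound $\ell(\gamma)\le C|z-w|$ coming from quasiconvexity is what yields (PE2).

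Your Whitney-cube construction bypasses all of this, and I see three problems. First, assigning to a Whitney cube $Q\subset\Omega_i$ the value $0$ or $1$ according to the density of $E'$ at a \emph{single} nearest boundary point does not force $D(\widetilde E,x)=D(E',x)$ at $\mathcal H^1$-a.e.\ $x\in\partial\Omega_i$: one boundary point $y$ with $D(E',y)=0$ determines the value of the entire stack of Whitney cubes above it, and that stack occupies a fixed positive proportion of $B(x,C|x-y|)$; if such points $y$ accumulate at $\mathcal H^1$-a.e.\ point of $\{D(E',\cdot)=1\}\cap\partial\Omega_i$ (which an $\mathcal H^1$-null set can do), then $\overline D(\mathbb R^2\setminus\widetilde E,x)>0$ there and (PE3) fails. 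Second, the perimeter bound is unjustified: two adjacent cubes receiving different values only tells you that two boundary points at distance $\approx\ell(Q)$ have densities $0$ and $1$, which does not produce $\gtrsim\ell(Q)$ of $P(E',\cdot)$ nearby --- the relative isoperimetric inequality only yields perimeter at the (unknown, possibly much smaller) scale at which the density ratio passes through $1/2$. Third, you never invoke the quasiconvexity of the components $\overline{\Omega_i}$, which is the geometric consequence of the $BV$-extension hypothesis that the paper needs for (PE2); some input of this kind seems unavoidable. A repair would at minimum require replacing the pointwise nearest-point rule by averages of the trace over boundary portions comparable to the cube, and even then the needed regularity of $\mathcal H^1\mres\partial\Omega_i$ is not free.
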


Let us mention that Theorem \ref{thm:planar} recovers partly the theorems in \cite{KRZ}. Namely, it immediately follows that Jordan $BV$-extension domains are $W^{1,1}$-extension domains since the set required in Theorem \ref{thm:planar} to be purely unrectifiable, is indeed empty.
The curve characterization \eqref{eq:curvep1} also follows quite easily from Theorem \ref{thm:planar} using a small observation recorded in \cite{KRZ}.
Let us briefly sketch this. Since a $W^{1,1}$-extension domain is known to be a $BV$-extension domain, its complement is quasiconvex. Then, a quasiconvex curve
between two points in the complement can be modified to intersect the boundaries of each $\Omega_i$ at most twice (see Lemma \ref{lem:quasi.int.Omega_i}). Theorem \ref{thm:planar} now says that the rest of the curve intersects $\partial \Omega$ in a $\mathcal H^1$-measure zero set, giving condition \eqref{eq:curvep1}.
Conversely, \eqref{eq:curvep1} implies quasiconvexity, and hence that $\Omega$ is a $BV$-extension domain. For a simply connected $\Omega$, we can connect every pair of components $\Omega_i$ and $\Omega_j$ with a curve satisfying \eqref{eq:curvep1}. Since the set
\[
 \partial \Omega \setminus \bigcup_{i \in I} \overline{\Omega_i}.
\]
is contained in countably many of such curves by \cite[Lemma 4.6]{KRZ},
we see that it is purely $1$-unrectifiable.

Let us point out, however, that the extension operator that we construct in Theorem \ref{thm:planar}, is not always linear. One of the main points of \cite{KRZ} was to construct a linear extension operator. At the moment we
do not see how our construction could be modified to give a linear extension operator. Still, the general smoothing operator we use for proving Theorem \ref{thm:mainRn} (and Theorem \ref{thm:planar}) immediately gives the following.

\begin{corollary}
Suppose $\Omega \subset \mathbb R^n$ is a bounded strong $BV$-extension domains where the extension operator is linear. Then there exists a linear $W^{1,1}$-extension operator from $W^{1,1}(\Omega)$ to $W^{1,1}(\mathbb R^n)$.
\end{corollary}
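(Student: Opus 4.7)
The plan is to realise the desired linear $W^{1,1}$-extension operator as a composition $S\circ T$, where $T\colon BV(\Omega)\to BV(\R^n)$ is the given linear strong $BV$-extension operator and $S$ is the linear smoothing operator that is built (implicitly) in the proof of the implication $(2)\Rightarrow(1)$ of Theorem~\ref{thm:mainRn}. The key observation is that this smoothing procedure takes as input an arbitrary $v\in BV(\R^n)$ with $\|Dv\|(\partial\Omega)=0$ and $v|_\Omega\in W^{1,1}(\Omega)$, and outputs some $Sv\in W^{1,1}(\R^n)$ with $Sv|_\Omega=v|_\Omega$ and $\|Sv\|_{W^{1,1}(\R^n)}\lesssim \|v\|_{BV(\R^n)}$; because all auxiliary data in the construction depend only on $\Omega$, the map $v\mapsto Sv$ is linear.

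Concretely, for $u\in W^{1,1}(\Omega)\hookrightarrow BV(\Omega)$ I would first apply $T$ to obtain $Tu\in BV(\R^n)$ satisfying $Tu|_\Omega = u$, $\|Tu\|_{BV(\R^n)}\le\|T\|\,\|u\|_{W^{1,1}(\Omega)}$, and $\|D(Tu)\|(\partial\Omega)=0$. The last of these rules out any singular part of the extended gradient concentrated on $\partial\Omega$, so the only remaining obstruction to $Tu$ already lying in $W^{1,1}(\R^n)$ is the possible singular part of $D(Tu)$ inside $\R^n\setminus\overline{\Omega}$, which is exactly what $S$ will erase.

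The operator $S$ would be built from a Whitney decomposition $\{Q_j\}$ of $\R^n\setminus\overline{\Omega}$ with $\ell(Q_j)\asymp\dist(Q_j,\partial\Omega)$, a subordinate Lipschitz partition of unity $\{\varphi_j\}$ with $\|\nabla\varphi_j\|_\infty\lesssim\ell(Q_j)^{-1}$, and a fixed standard mollifier $\rho_\epsilon$. Setting $Sv(x)=v(x)$ for $x\in\Omega$ and
\[
Sv(x)=\sum_j\varphi_j(x)\,(\rho_{c\ell(Q_j)}\ast v)(x)\qquad\text{for }x\in\R^n\setminus\overline{\Omega},
\]
with $c>0$ small and fixed, makes $S$ visibly linear in $v$, since the data $\{Q_j,\varphi_j,\rho,c\}$ depend only on $\Omega$.

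It remains to check that $S\circ T\colon W^{1,1}(\Omega)\to W^{1,1}(\R^n)$ is bounded, which is precisely the content of the constructive direction in Theorem~\ref{thm:mainRn}. Inside $\Omega$ one has $Sv=u$; on each Whitney cube $Sv$ is smooth and the standard bookkeeping bounds $\int_{Q_j^*}|\nabla Sv|$ by $\|Dv\|$ on a finite-overlap enlargement of $Q_j^*$, summing to $\|Dv\|(\R^n\setminus\overline{\Omega})$; and the hypothesis $\|D(Tu)\|(\partial\Omega)=0$ together with the collapse of the mollification radius as $x\to\partial\Omega$ forces $D(Sv)$ to have no singular part on $\partial\Omega$, so $Sv\in W^{1,1}(\R^n)$. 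The only real obstacle here is organisational rather than mathematical: one must verify that the smoothing written down in the proof of Theorem~\ref{thm:mainRn} is indeed phrased with all auxiliary choices fixed independently of the input $v$, so that linearity is genuinely preserved upon composing with $T$.
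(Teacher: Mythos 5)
Your proposal is correct and matches the paper's intended argument: the corollary follows by composing the linear strong $BV$-extension operator $T$ with the linear Whitney smoothing operator $S_{\R^n,\,\R^n\setminus\overline{\Omega}}$ of Theorem \ref{thm:smoothing}, exactly as in the proof of the implication $(2)\Rightarrow(1)$, where linearity, boundedness, and the vanishing of $\|D(S_{\R^n,\,\R^n\setminus\overline{\Omega}}Tu-Tu)\|(\partial\Omega)$ are already established. The only cosmetic difference is that the paper's smoothing uses cube averages $u_{Q_j}\psi_j$ rather than mollifications $\varphi_j(\rho_{c\ell(Q_j)}\ast v)$, which changes nothing essential.
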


Although not strictly used in our proofs, we include the following result for future use: Every $BV$-extension domain $\Omega\subset\R^n$ satisfies the measure density condition, that is, there exists a constant $c>0$ so that for every $x\in\overline{\Omega}$ and $r\in (0,1]$ we have
$|B(x,r)\cap \Omega|\geq c r^n .$ One may find this result in Section 2.2. The same conclusion for $W^{1,p}$
-extension domains with $1\leq p<\infty$ is also true and was already shown in \cite{HKT2008}.

\section{Preliminaries}

When making estimates, we often write the constants as positive
real numbers $C$ which may vary between
appearances, even within a chain of inequalities. These constants normally only depend on the dimension of the underlying space $\R^n$ unless otherwise stated.

 For any point $x \in \mathbb R^n$ and radius $r>0$ we denote the open ball by
 \[
 B(x,r) = \{y \in \mathbb R^n\,:\, |x-y| < r\}.
 \]
 More generally, for a set $A \subset \mathbb R^n$ we define the open $r$-neighbourhood as
 \[
 B(A,r) = \bigcup_{x \in A} B(x,r).
 \]
 
 We denote by $|E|$ the $n$-dimensional outer Lebesgue measure of a set $E \subset \mathbb R^n$.
 For any Lebesgue measurable subset $E \subset \mathbb R^n$ and any point $x \in \mathbb R^n$ we then define the upper density of $E$ at $x$ as
 \[
 \overline{D}(E,x) = \limsup_{r\searrow 0}\frac{|E\cap B(x,r)|}{|B(x,r)|},
 \]
 and the lower density of $E$ at $x$ as
 \[
 \underline{D}(E,x) = \liminf_{r\searrow 0}\frac{|E\cap B(x,r)|}{|B(x,r)|}.
 \]
 If  $\overline{D}(E,x) = \underline{D}(E,x)$, we call the common value the density of $E$ at $x$ and denote it by $D(E,x)$.
 The essential interior of $E$ is then defined as
 \[
 \mathring{E}^M = \{x \in \mathbb R^n \,:\, D(E,x)=1 \},
 \]
 the essential closure of $E$ as
 \[
 \overline{E}^M = \{x \in \mathbb R^n \,:\, \overline{D}(E,x) > 0\},
 \]
 and the essential boundary of $E$ as
 \[
 \partial^M E = \{x \in \mathbb R^n\,:\, \overline{D}(E,x) > 0\text{ and } \overline{D}(\mathbb R^n \setminus E,x) > 0\}.
 \]
As usual,  $\mathcal{H}^s(A)$ will stand for the $s$-dimensional Hausdorff measure of a set $A \subset \R^n$ obtained as the limit
\[
\mathcal H^s(A) = \lim_{\delta \searrow 0}\mathcal H_\delta^s(A),
\]
where $\mathcal H_\delta^s(A)$ is the $s$-dimensional Hausdorff $\delta$-content of $A$ defined as
\[
\mathcal H_\delta^s(A) = \inf\left\{\sum_{i=1}^\infty \diam(U_i)^s \,:\, A \subset \bigcup_{i=1}^\infty U_i, \diam(U_i) \le \delta\right\}.
\]

We say that a set $H\subset\R^n$ is $m$-rectifiable, for some $m<n$, if there exist countably many Lipschitz maps $f_j\colon\R^m\to \R^n$ so that $\mathcal{H}^m(H\setminus \bigcup_{j}f_j(\R^m))=0$. A set $H$ will be called  purely $m$-unrectifiable if for every Lipschitz map $f\colon \R^m\to\R^n$ we have 
$$\mathcal{H}^m(H\cap f(\R^m))=0.$$ 
Observe that by Rademacher's theorem one can deduce that if $f \colon \R^m\to\R^n$
is Lipschitz, then there are countably many sets $E_i\subset\R^m$ on which $f$ is bi-Lipschitz and such that
$\mathcal{H}^m(f(\R^m \setminus\bigcup_i E_i)) = 0$. 

Moreover, it easily follows that if $H \subset \mathbb R^n$ is not $m$-purely unrectifiable, then there exists a Lipschitz map $f \colon \mathbb R^m \to \mathbb R^{n-m}$ so that up to a rotation, the set
\[
H \cap \textrm{Graph}(f)
\]
has positive $\mathcal H^{m}$-measure, where $\textrm{Graph}(f) = \{(x,f(x))\,:\, x \in \mathbb R^m\}$.

 By a dyadic cube we refer to $Q = [0,2^{-k}]^n + \mathtt {j} \subset \R^n$ for some $k \in \mathbb Z$ and $\mathtt{j} \in 2^{-k}{\mathbb Z}^n$. We denote the side-length of such dyadic cube $Q$ by $\ell(Q) := 2^{-k}$.
 
 \subsection{$BV$-functions and sets of finite perimeter}
 
 Let us recall some basic results related to $BV$-functions and sets of finite perimeter.
For a more detailed account, we refer to the books \cite{AFP2000,EG2015,F1969}.
  
Differently to this paper, Mazy'a and Burago'  (see \cite{BM1967} and also \cite[Section 9.3]{mazya}) considered the space
$$BV_l(\Omega)=\{u\in L^{1}_{loc}(\Omega):\, \|Du\|(\Omega)<\infty\} $$
equipped with the seminorm $\|Du\|(\Omega)$. This way they defined $BV_l$-extension domains to be those $\Omega\subset\R^n$ for which just the total variation of the extension is controlled, that is, whenever $\|D(Tu)\|(\R^n)\leq C\|Du\|(\Omega)$. As we already explained in the introduction, they proved that being a $BV_l$-extension domain was equivalent to the fact that any set $E\subset\Omega$ of finite perimeter in $\Omega$ admits an extension $\widetilde E\subset\R^n$ satisfying only (PE1) and (PE2) from Definition \ref{def:strongperi}.  Note however, that thanks to \cite[Lemma 2.1]{KMS2010} $BV_l$-extension domains are equivalent to $BV$ extension domains if $\Omega$ is bounded.  

When working with $BV$ functions we will make use of the well-known $(1,1)$-Poincar\'e inequality that we now state (see for instance \cite[Theorem 3.44]{AFP2000} for the proof).
\begin{theorem}\label{thm:Poincare}
Let $\Omega\subset\R^n$ be an open bounded set with Lipschitz boundary. Then there exists a constant $C>0$ depending only on $n$ and $\Omega$ so that for every $u\in BV(\Omega)$ we have
$$\int_\Omega |u(y)-u_\Omega|\,dy\leq C \|Du\|(\Omega) .$$
In particular, there exists a constant $C>0$ only depending on $n$ so that if $Q,Q'\subset\R^n$ are two dyadic cubes  with $\frac{1}{4}\ell(Q')\leq \ell(Q)\leq 4\ell(Q')$ and  $\Omega=\text{int}(Q\cup Q')$ connected, then for every $u\in BV(\Omega)$,
\begin{equation}\label{Poincare}
\int_{\Omega} |u(y)-u_\Omega|\,dy\leq C \ell(Q)\|Du\|(\Omega) .
\end{equation}
\end{theorem}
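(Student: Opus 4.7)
The plan is to take the first inequality as granted by the cited reference \cite[Theorem 3.44]{AFP2000} and focus on deducing the second, scale-invariant inequality from it. The classical proof of the first part is a standard compactness-contradiction argument: if the inequality failed one would obtain a sequence $u_k\in BV(\Omega)$ with $(u_k)_\Omega=0$, $\|u_k\|_{L^1(\Omega)}=1$ and $\|Du_k\|(\Omega)\to 0$; the compact embedding $BV(\Omega)\hookrightarrow L^1(\Omega)$, available because $\Omega$ has Lipschitz boundary, would give an $L^1$-limit $u$ with $u_\Omega=0$ and $\|Du\|(\Omega)=0$, hence $u\equiv 0$, contradicting $\|u\|_{L^1(\Omega)}=1$.

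For the ``in particular'' clause, the real content is that the constant depends only on $n$ even though it involves two dyadic cubes that can live at any scale. The strategy is a standard \emph{reduction to finitely many reference configurations}. First I would verify that whenever two dyadic cubes $Q,Q'$ satisfy $\tfrac14\ell(Q')\le\ell(Q)\le 4\ell(Q')$ and $\Omega=\mathrm{int}(Q\cup Q')$ is connected, the cubes must share a face of positive $(n-1)$-dimensional measure, which forces $\Omega$ to be a bounded Lipschitz domain. After translating so that a corner of $Q$ sits at the origin and dilating by $\ell(Q)^{-1}$, one may assume $Q=[0,1]^n$ and $\ell(Q')\in\{1/4,1/2,1,2,4\}$; there are then only finitely many admissible relative positions of $Q'$, a number depending only on $n$.

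Applying the first inequality to each of these finitely many reference domains produces finitely many constants, whose maximum is the desired $C=C(n)$. To transfer the result back to the original $\Omega$, I would use the scaling $\tilde u(x)=u(\ell(Q)\,x)$ on $\tilde\Omega=\Omega/\ell(Q)$. A direct change of variables gives $\tilde u_{\tilde\Omega}=u_\Omega$, $\|\tilde u-\tilde u_{\tilde\Omega}\|_{L^1(\tilde\Omega)}=\ell(Q)^{-n}\|u-u_\Omega\|_{L^1(\Omega)}$, and $\|D\tilde u\|(\tilde\Omega)=\ell(Q)^{1-n}\|Du\|(\Omega)$. Feeding these into the dimensional inequality on $\tilde\Omega$ and multiplying through by $\ell(Q)^n$ yields exactly \eqref{Poincare}.

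The only step that is not entirely automatic is checking that connectedness of $\mathrm{int}(Q\cup Q')$ truly forces the two dyadic cubes to share a full $(n-1)$-face (rather than merely an edge or vertex), so that one genuinely has a uniformly Lipschitz finite family of reference domains to which the first part applies with a bounded constant. Once that geometric observation is in place, the enumeration of configurations and the scaling computation are routine.
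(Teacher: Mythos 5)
Your proposal is correct and follows essentially the route the paper intends: the paper offers no proof of its own beyond citing \cite[Theorem 3.44]{AFP2000} for the Poincar\'e inequality on bounded Lipschitz domains, and the ``in particular'' clause is exactly the rescaling-plus-finitely-many-reference-configurations argument you describe. Your one remaining worry is easily dispatched: for non-nested dyadic cubes the intersection $Q\cap Q'$ lies in $\partial Q\cap\partial Q'$, so if it had dimension at most $n-2$ then $\mathrm{int}(Q\cup Q')$ would be the disjoint union $\mathrm{int}(Q)\cup\mathrm{int}(Q')$ and hence disconnected; thus connectedness forces a shared face piece with nonempty relative interior, which by dyadic alignment and the comparability $\tfrac14\ell(Q')\le\ell(Q)\le 4\ell(Q')$ is the full face of the smaller cube (and in the nested case $\mathrm{int}(Q\cup Q')$ is simply the larger cube), so your finite family of uniformly Lipschitz reference domains and the scaling computation give \eqref{Poincare} with $C=C(n)$.
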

We are using here the notation of the mean value integral of a function $u$ on the set $\Omega$ as
$$u_\Omega=\frac{1}{|\Omega|}\int_\Omega u(y)\,dy .$$

 Let us record as well the coarea formula for $BV$ functions. See for example \cite[Section 5.5]{EG2015}.
 
 \begin{theorem}\label{thm:coarea}
 Given a function $u\in BV(\Omega)$, the superlevel sets $u_t=\{x\in\Omega:\, u(x)>t\}$ have finite perimeter in $\Omega$ for almost every $t\in\R$ and
 $$\|Du\|(F)=\int^{\infty}_{-\infty} P(u_t,F)\,dt $$
 for every Borel set $F\subset \Omega$.
 Conversely, if $u\in L^1(\Omega)$ and $\int^{\infty}_{-\infty} P(u_t,\Omega)\,dt<\infty $ then $u\in BV(\Omega)$.
\end{theorem}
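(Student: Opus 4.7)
The plan is to establish the identity in two steps, proving each inequality separately on open sets $F \subset \Omega$ and then promoting the equality to arbitrary Borel sets by Radon measure regularity. The strategy mirrors the classical proof of the BV coarea formula: one direction drops out of the definition of total variation combined with the layer-cake decomposition and Fubini, while the other relies on smooth approximation followed by the Federer coarea formula for Lipschitz functions.

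First I would show $\|Du\|(F) \le \int_{-\infty}^{\infty} P(u_t, F)\,dt$ for open $F \subset \Omega$. Given a test field $v \in C^\infty_0(F;\R^n)$ with $|v| \le 1$, the layer-cake representation $u(x) = \int_0^\infty \chi_{u_t}(x)\,dt - \int_{-\infty}^0 (1 - \chi_{u_t}(x))\,dt$, together with Fubini and the vanishing of $\int \text{div}(v)\,dx$ (since $v$ is compactly supported in $F$), rewrites the pairing as
\[
\int_\Omega u\,\text{div}(v)\,dx = \int_{-\infty}^{\infty} \int_\Omega \chi_{u_t}(x)\,\text{div}(v)(x)\,dx\,dt.
\]
Bounding the inner integral by $P(u_t, F)$ and taking the supremum over admissible $v$ gives the inequality. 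This argument also supplies the converse part of the theorem: if $u \in L^1(\Omega)$ with $\int_{-\infty}^{\infty} P(u_t, \Omega)\,dt < \infty$, then $\|Du\|(\Omega) < \infty$ and hence $u \in BV(\Omega)$.

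For the opposite inequality, I would select smooth approximants $u_k \in C^\infty(\Omega) \cap W^{1,1}(\Omega)$ with $u_k \to u$ in $L^1(\Omega)$ and $|\nabla u_k|\,dx$ converging weakly-$*$ to $\|Du\|$ as Radon measures on $\Omega$ (the standard strict-convergence approximation built by mollification together with a partition of unity adapted near $\partial\Omega$). For each $u_k$, the classical smooth coarea formula, combined with Sard's theorem which identifies $P(\{u_k > t\}, F) = \mathcal{H}^{n-1}(F \cap \{u_k = t\})$ for a.e.\ $t$, yields
\[
\int_F |\nabla u_k|\,dx = \int_{-\infty}^{\infty} P(\{u_k > t\}, F)\,dt.
\]
Since $u_k \to u$ in $L^1(\Omega)$ implies $\chi_{\{u_k > t\}} \to \chi_{u_t}$ in $L^1(\Omega)$ for every $t$ outside the (at most countable) set of atoms of $t \mapsto |\{u > t\}|$, the lower semicontinuity of perimeter gives $P(u_t, F) \le \liminf_{k} P(\{u_k > t\}, F)$ for a.e.\ $t$. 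Fatou's lemma followed by the weak-$*$ convergence on the open set $F$ then produces $\int_{-\infty}^{\infty} P(u_t, F)\,dt \le \liminf_k \int_F |\nabla u_k|\,dx \le \|Du\|(F)$.

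To promote the identity from open to arbitrary Borel $F \subset \Omega$, I would check that $F \mapsto \int_{-\infty}^{\infty} P(u_t, F)\,dt$ defines a finite Borel measure on $\Omega$: joint measurability of $(t,F) \mapsto P(u_t, F)$ follows from a monotone class argument starting from open cylinders, and countable additivity from monotone convergence. Uniqueness of Radon measures agreeing on open sets then delivers equality on all Borel sets. The main obstacle I expect is the smooth approximation step: one needs not merely $\int |\nabla u_k|\,dx \to \|Du\|(\Omega)$ but genuine weak-$*$ convergence of the gradient measures, so that the Fatou bound on the right-hand side controls $\|Du\|(F)$ for each open $F$ and not only $\|Du\|(\Omega)$. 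Once this is arranged, the remaining work is essentially measure-theoretic bookkeeping.
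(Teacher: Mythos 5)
The paper does not prove this theorem; it is cited directly from Evans--Gariepy, so there is no ``paper's proof'' to compare against, and I am evaluating your reconstruction on its own merits. Your easy direction (layer-cake plus Fubini to get $\|Du\|(F)\le \int P(u_t,F)\,dt$ for open $F$, which also handles the converse assertion) is correct. The gap is in the hard direction. You want $\int_{-\infty}^{\infty} P(u_t,F)\,dt \le \|Du\|(F)$ for \emph{open} $F$, and close the argument with ``Fatou's lemma followed by the weak-$*$ convergence on the open set $F$ then produces $\ldots \le \liminf_k \int_F |\nabla u_k|\,dx \le \|Du\|(F)$.'' But weak-$*$ convergence of Radon measures gives the \emph{opposite} inequality on open sets: by the Portmanteau theorem, $\|Du\|(F)\le \liminf_k \int_F|\nabla u_k|\,dx$ when $F$ is open, not $\liminf_k\int_F|\nabla u_k|\,dx \le \|Du\|(F)$. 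The upper bound $\limsup_k\int_K|\nabla u_k|\,dx\le\|Du\|(K)$ holds only for compact $K$, but on compact sets the lower semicontinuity of perimeter (which you also invoke) is not available, since $E\mapsto P(E,A)$ is $L^1_{\mathrm{loc}}$-lower semicontinuous only for open $A$. Strict convergence gives $\int_\Omega|\nabla u_k|\,dx\to\|Du\|(\Omega)$, but this does not localize to subsets $F$ whose boundary may be charged by $\|Du\|$.

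The standard way out is to run the hard direction only at $F=\Omega$, where strict convergence and lower semicontinuity are simultaneously in force; this yields $\int P(u_t,\Omega)\,dt\le \|Du\|(\Omega)$ and hence equality there. Then set $\nu(F):=\int_{-\infty}^{\infty}P(u_t,F)\,dt$. Your easy direction plus outer regularity of both Radon measures gives $\|Du\|(A)\le\nu(A)$ for all Borel $A\subset\Omega$, and since $\|Du\|(\Omega)=\nu(\Omega)<\infty$, additivity applied to the pair $A$ and $\Omega\setminus A$ forces $\|Du\|=\nu$ on all Borel sets. So your final ``measure-theoretic bookkeeping'' step is the right idea, but it must come after proving equality at $\Omega$ alone, not after attempting both inequalities on every open set. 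One further small imprecision: $u_k\to u$ in $L^1$ only gives a.e.\ convergence of $\chi_{\{u_k>t\}}$ to $\chi_{u_t}$ for a.e.\ $t$ along a subsequence (via Cavalieri applied to $|u_k-u|$ and Fatou); this is enough for your $\liminf$ argument, but the dependence on a subsequence should be acknowledged.
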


An important result due to Federer \cite[Section 4..5.11]{F1969} tells us that a set $E$ has finite perimeter in $\Omega$ if and only if $ \mathcal{H}^{n-1}(\partial ^M E\cap\Omega)<\infty$.  Moreover, thanks to De Giorgi's pioneering work \cite{DG1955} we can understand the structure of the boundary of sets of finite perimeter even better. Namely, if $E$ has finite perimeter in $\Omega$ then for every subset $A\subset \Omega$,
 $$\|D\chi_E\|(A)=P(E,A)= \mathcal{H}^{n-1} ( \partial^M E\cap A)$$
 and if $E$ has finite perimeter in $\R^n$ then
 $$\partial^M E=F\cup \bigcup_{n\in\N} K_n  $$ 
 where $\mathcal{H}^{n-1}(F)=0$ and $K_n$ are compact subsets of $C^1$ hypersurfaces.  
Furthermore, for any set $E$ with finite perimeter we have 
$$\overline{D}(E,x)=\underline{D}(E,x)\in \{0,1/2,1\} $$
for $\mathcal{H}^{n-1}$-almost every $x\in\R^n$.
Moreover, $\mathcal{H}^{n-1}(\partial ^M E\setminus \{x:\, D(E,x)=1/2   \})=0$, and hence for $\mathcal{H}^{n-1}$-almost every $x\in\partial^M E$ we have 
$$D(E,x) = \frac12.$$

Let us finally recall some terminology and results from \cite{ACMM2001}.
 A Lebesgue measurable set $E \subset \mathbb R^n$ with $|E|>0$ is called decomposable if there exist two Lebesgue measurable sets $F,G \subset \mathbb R^n$ so that $|F|,|G|>0$, $E = F \cup G$, $F\cap G = \emptyset$, and
 \[
 P(E,\mathbb R^n) = P(F,\mathbb R^n) + P(G,\mathbb R^n).
 \]
 A set is called indecomposable if it is not decomposable. For example, any connected open set $E\subset\R^n$ with $\mathcal{H}^{n-1}(\partial ^M E)<\infty$ is indecomposable.

For any set $E\subset\R^n$ of finite perimeter  we can always find a unique countable family of disjoint indecomposable subsets $E_i\subset E$ so that $|E_i|>0$, $P(E,\R^n)=\sum_{i} P(E_i,\R^n)$ and, moreover,
$$ \mathcal{H}^{n-1}\left(  \mathring{E}^M\setminus \bigcup_{i}   \mathring{E}_i^M  \right)=0.$$
For a proof of this result we refer to \cite[Theorem 1]{ACMM2001}.

In the particular case of $\R^2$, thanks to \cite[Corollary 1]{ACMM2001} one can find a decomposition of sets of finite perimeter into indecomposable sets whose  boundaries are  rectifiable Jordan curves, except for a set of $\mathcal{H}^1$-measure zero. We will state this useful result in the last Section 5, in Theorem \ref{thm:planardecomposition}.

\subsection{Measure density condition for $BV$-extension domains}

Nowadays it is a well-known fact that all $W^{1,p}$-extension domains for $1\leq p<\infty$ satisfy the measure density condition (see \cite{HKT2008}). 
Although we do not need this in our proofs, we record here the fact that the same property holds for $BV$-extension domains. Let us remark that a measure density condition for planar $BV_l$-extension domains was proven in \cite[Lemma 2.10]{KMS2010}. However, the proof does not seem to extend to domains in $\R^n$. The method of proof we employ here follows the same lines as \cite{HKT2008} and can be adapted for $BV_l$-extension domains as well.

\begin{proposition}\label{meas.dens:BV}
Let $\Omega\subset\R^n$ be a $BV$-extension or a $W^{1,1}$-extension domain, then there exists a constant $c>0$, depending only on $n$ and on the operator norm, so that for every $x\in\overline{\Omega}$ and $r\in (0,1]$ we have
$$|B(x,r)\cap \Omega|\geq c r^n .$$
\end{proposition}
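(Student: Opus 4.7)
The plan is to adapt the Haj{\l}asz--Koskela--Tuominen approach from \cite{HKT2008}: test the extension operator against a standard Lipschitz cutoff around $x$ at scale $r$, push it through the extension, and exploit the Sobolev embedding on $\R^n$ to derive a self-improving estimate on the density $\rho(s):=|B(x,s)\cap\Omega|/s^n$. Since every $W^{1,1}$-extension domain is a $BV$-extension domain by \cite[Lemma 2.4]{KMS2010}, it suffices to treat the $BV$ case.

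First I would reduce to $x\in\Omega$. For a boundary point $x\in\partial\Omega$ I approximate by interior points $x_k\in\Omega$ with $x_k\to x$; once $|x-x_k|<r/2$ one has $B(x_k,r/2)\subset B(x,r)$, so a uniform density lower bound at $(x_k,r/2)$ transfers (with a loss of $2^{-n}$) to $(x,r)$.

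For $x\in\Omega$ and $r\in(0,1]$, I would take the standard Lipschitz cutoff $u$ equal to $1$ on $B(x,r/2)$, vanishing outside $B(x,r)$, and with Lipschitz constant at most $2/r$. Its restriction to $\Omega$ satisfies
\[
\|u\|_{BV(\Omega)} \le C\, r^{-1}\,|B(x,r)\cap\Omega|
\]
(using $r\le 1$), and the $BV$-extension $Tu\in BV(\R^n)$ inherits the same bound up to the factor $\|T\|$. Applying the Gagliardo--Nirenberg--Sobolev inequality for $BV$ functions on $\R^n$, $\|Tu\|_{L^{n/(n-1)}(\R^n)}\le C_n\|D(Tu)\|(\R^n)$, and using that $Tu\equiv 1$ on $B(x,r/2)\cap\Omega$, yields
\[
|B(x,r/2)\cap\Omega|^{(n-1)/n} \le C'\, r^{-1}\,|B(x,r)\cap\Omega|.
\]
Normalising by $s^n$ rearranges this into a self-improving inequality
\[
\rho(r/2)\le C^{*}\,\rho(r)^{n/(n-1)},
\]
with $C^{*}$ depending only on $n$ and $\|T\|$.

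The finishing argument is a standard iteration by contradiction: set $\epsilon_0:=(2C^{*})^{-(n-1)}$. If $\rho(r)\le\epsilon_0$ for some $r\in(0,1]$, the inequality above gives $\rho(r/2)\le\rho(r)/2$, whence $\rho(2^{-k}r)\le 2^{-k}\epsilon_0\to 0$. Since $x\in\Omega$ is an interior point, $\rho(s)=1$ for all sufficiently small $s$, a contradiction. Hence $\rho(r)>\epsilon_0$ for every $x\in\Omega$ and $r\in(0,1]$, and the boundary reduction completes the proof with $c=c_n\epsilon_0/2^n$. The delicate point is the direction of the self-improvement: because the exponent $n/(n-1)$ exceeds $1$, the Sobolev step does \emph{not} by itself produce a lower density bound--it only converts smallness into faster smallness--so the reduction to interior points, where the a priori anchor $\rho(s)=1$ at small scales is available to close the iteration, is essential.
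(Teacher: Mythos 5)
Your proof is correct, and it is a recognizable variant of the same Haj\l asz--Koskela--Tuominen mechanism the paper uses (Lipschitz cutoff, push through $T$, Gagliardo--Nirenberg--Sobolev for $BV$), but the iteration is organized differently. The paper fixes the \emph{measures} and lets the radii adapt: it defines $r_i$ by $|\Omega\cap B(x,r_i)|=2^{-i}|\Omega\cap B(x,r_0)|$, tests one cutoff per annulus, and obtains $r_{i-1}-r_i\le C\|T\|\,2^{-i/n}|\Omega\cap B(x,r)|^{1/n}$, so that summing the telescoping series gives $r\le C|\Omega\cap B(x,r)|^{1/n}$ directly, for every $x\in\overline\Omega$, with no case distinction between interior and boundary points. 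You instead fix the dyadic radii and track the density $\rho$, turning the Sobolev step into the self-improving inequality $\rho(r/2)\le C^{*}\rho(r)^{n/(n-1)}$ and closing by contradiction against the interior anchor $\rho(s)\equiv\omega_n$ (not $1$, with your normalization $\rho(s)=|B(x,s)\cap\Omega|/s^{n}$ --- a harmless slip) for small $s$; the boundary case then needs the separate limiting argument $x_k\to x$, which you supply correctly. The trade-off is that the paper's summation yields the quantitative bound in one pass and uniformly on $\overline\Omega$, whereas your version isolates more transparently \emph{why} the estimate holds (smallness of density is unstable under the Sobolev bootstrap), at the cost of the extra interior-reduction step --- which, as you rightly flag, is not optional in your scheme since the exponent $n/(n-1)>1$ only converts smallness into faster smallness. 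Both arguments are sound and give a constant depending only on $n$ and $\|T\|$.
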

\begin{proof}
We will only make the proof for $BV$-extension domains. For $W^{1,1}$-extension domains one can use the results from \cite{HKT2008}, or the fact that $W^{1,1}$-extension domains are $BV$-extension domains. A proof of this fact can be found in \cite[Lemma 2.4]{KMS2010}. The reader will notice that the key point will be to apply the Sobolev embedding theorem, which is both valid for $W^{1,1}$ and $BV$ functions.

Let us denote $r_0 = r$.
By induction, we define for every $i \in \N$ the radius $r_i \in (0,r_{i-1})$ by the equality
\[
|\Omega\cap B(x,r_i)| = \frac12|\Omega\cap B(x,r_{i-1})| = 2^{-i}|\Omega\cap B(x,r_0)|.
\]
Since $x \in \overline\Omega$, we have that $r_i \searrow 0$ as $i\to \infty$.

For each $i \in\N$, consider the function $f_i:\Omega \to \mathbb{R}$
\[
 f_i(y) = \begin{cases}1, & \text{for }y \in B(x,r_i) \cap \Omega,\\
 \frac{r_{i-1}-|x-y|}{r_{i-1}-r_i}, & \text{for }y \in (B(x,r_{i-1})\setminus B(x,r_i)) \cap \Omega,\\
 0, & \text{otherwise}.
 \end{cases}
\]
Note that these functions belong to the class $W^{1,1}(\Omega)$, in particular they are $BV$ functions. We can estimate their $BV$-norms by
\begin{align*}
\Vert f_i\Vert_{BV(\Omega)} &=\Vert f_i\Vert_{L^1(\Omega)} +\|D\, f_i\|(\Omega)  =\int_{\Omega} |f|+  \int_{\Omega} |\nabla f_i|  \\
& \le  |B(x,r_{i-1})\cap \Omega| + |r_i - r_{i-1}|^{-1} |(B(x,r_{i-1})\setminus B(x,r_i))\cap \Omega)| \\
&\leq C|r_i - r_{i-1}|^{-1} 2^{-i}|\Omega\cap B(x,r_0)|.
\end{align*}
Call $1^*=\frac{n}{n-1}$ and denote by $T\colon BV(\Omega)\to BV(\R^n)$ the extension operator. By the Sobolev inequality for BV functions (see \cite[Theorem 5.10]{EG2015}) we know that
$$\Vert Tf_i\Vert_{L^{1^*}(\R^n)}\leq C \Vert D(Tf_i)\Vert(\R^n),$$
where $C>0$ depends only on the dimension $n$.
Hence we have the following chain of inequalities
$$\|f_i\|_{L^{1^*}(\Omega)}\leq \|Tf_i\|_{L^{1*}(\mathbb{R}^n)}\leq C\|D(Tf_i)\|(\mathbb{R}^n)\leq C\|T\|\,\|f_i\|_{BV(\Omega)}. $$
We also have
\begin{align*}
\int_{\Omega} \vert f_i(y)\vert^{1^*}\,dy &\geq |B(x,r_{i})\cap \Omega|=2^{-i}|B(x,r_0)\cap \Omega|,
\end{align*}
and therefore
\begin{align*}
2^{-i}|B(x,r_0)\cap \Omega|&\leq    C\|T\|^{1^*}\left(|r_i - r_{i-1}|^{-1} 2^{-i}|\Omega\cap B(x,r_0)|\right)^{1^*} .
\end{align*}
Consequently, 
\begin{align*}
r_{i-1}-r_i & \leq C\|T\|2^{i(1/1^{*}-1)}|\Omega\cap B(x,r_0)|^{1-1/1^{*}} \\
&= C\|T\|2^{-i/n}|\Omega\cap B(x,r_0)|^{1/n}.
\end{align*}
By summing up all these quantities we conclude that
$$ r =r_0= \sum_{i=1}^\infty(r_{i-1} - r_{i}) \le C\|T\|\sum_{i=1}^\infty 2^{-i/n}|\Omega\cap B(x,r)|^{1/n} = \frac{C\|T\|}{2^{1/n}-1}|\Omega\cap B(x,r)|^{1/n}. $$
This gives the claimed inequality.
 \end{proof}

\section{Equivalence of $W^{1,1}$-extension and strong $BV$-extension domains}

This section is devoted to the proof of Theorem \ref{thm:mainRn}. The idea in going from a strong $BV$-extension to a $W^{1,1}$-extension is to first extend the $W^{1,1}$-function from the domain as a $BV$-function to the whole space and then mollify it in the exterior of the domain. In the mollification process it is important to check that we do not change the function too much near the boundary.

 \subsection{Whitney smoothing operator}

In this subsection we prove existence of a suitable smoothing operator from 
BV to $W^{1,1}$.
For similar constructions we recommend to the reader to have a look at  \cite{BHS2002,HK1998,LLW2020}.

\begin{theorem}\label{thm:smoothing}
 Let $A \subset B \subset \mathbb R^n$ be open subsets. There exist a constant $C$ depending only on the dimension $n$ and a linear operator
\[
S_{B,A} \colon BV(B) \to \left\{u \in BV(B) \,:\, u|_{A}\in W^{1,1}(A) \right\} 
\]
so that for any $u \in BV(B)$ we have $S_{B,A}u|_{B\setminus A}=u$, 
\begin{equation}\label{eq:normineq}
\|S_{B,A}u\|_{BV(B)} \leq C \|u\|_{BV(B)},
\end{equation}
and
\begin{equation}\label{eq:boundaryzero}
\|D(S_{B,A}u-u)\|(\partial A)=0,
\end{equation}
where $S_{B,A}u-u$ is understood to be defined in the whole $\mathbb R^n$ via a zero-extension. Moreover,
 the operator $S_{B,A}$ is also bounded when acting from the space $BV_l(A)$ into the homogeneous Sobolev space $L^{1,1}(A)$.
\end{theorem}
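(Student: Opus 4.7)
The plan is to construct $S_{B,A}$ by a classical Whitney-type smoothing on $A$, leaving $u$ untouched on $B\setminus A$. I would take a Whitney decomposition $\{Q_j\}_j$ of $A$ by closed dyadic cubes with $\ell(Q_j)\sim\dist(Q_j,\partial A)$, a mild enlargement $\{Q_j^*\}_j$ with $Q_j^*\subset A$ and bounded overlap, and a smooth partition of unity $\{\psi_j\}_j$ subordinate to $\{Q_j^*\}_j$ with $\|\nabla\psi_j\|_\infty\le C/\ell(Q_j)$, and then define
\[
 S_{B,A}u(x)\;:=\;\begin{cases}\sum_j\psi_j(x)\,u_{Q_j}, & x\in A,\\ u(x), & x\in B\setminus A.\end{cases}
\]
The operator is linear, and local finiteness of the sum gives $S_{B,A}u\in C^\infty(A)$.

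The norm estimate~\eqref{eq:normineq} would follow from a standard Whitney/Poincar\'e argument. For $x\in Q_{j_0}$, the identity $\sum_j\nabla\psi_j\equiv 0$ lets one write $\nabla S_{B,A}u(x)=\sum_j\nabla\psi_j(x)(u_{Q_j}-u_{Q_{j_0}})$; for each adjacent pair $(Q_j,Q_{j_0})$, Theorem~\ref{thm:Poincare} applied on their connected union yields $|u_{Q_j}-u_{Q_{j_0}}|\lesssim\ell(Q_{j_0})\,\|Du\|(Q_j\cup Q_{j_0})/|Q_{j_0}|$. Integrating over $Q_{j_0}$ and summing with bounded overlap gives $\|D(S_{B,A}u)\|(A)\le C\|Du\|(A)$, and the $L^1$-piece is analogous; together with $S_{B,A}u=u$ on $B\setminus A$, this proves~\eqref{eq:normineq}. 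Since only the seminorm $\|Du\|(A)$ enters the gradient bound, the same $A$-local computation also delivers the claimed $BV_l(A)\to L^{1,1}(A)$ boundedness.

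The main difficulty is the zero-boundary condition~\eqref{eq:boundaryzero}. I would set $w:=(S_{B,A}u-u)\chi_B$ extended by zero to $\R^n$, so $w\equiv 0$ on $\R^n\setminus A$. The crucial point is that the construction is engineered so that $w$ has vanishing inner trace on $\partial A$: at any $y_0\in\partial A$ where the $BV$-function $u$ admits an approximate limit $\tilde u(y_0)$---which holds at $\mathcal H^{n-1}$-a.e.\ $y_0$---each average $u_{Q_j}$ with $Q_j$ close to $y_0$ tends to $\tilde u(y_0)$, hence so does the convex combination $S_{B,A}u(x)$ as $x\to y_0$ through $A$, matching the $A$-side Lebesgue limit of $u$. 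Thus $w(x)\to 0$ from the $A$-side at $\mathcal H^{n-1}$-a.e.\ $y_0\in\partial A$, and trivially from outside $A$. Consequently $\partial A$ is disjoint modulo $\mathcal H^{n-1}$-null sets from both the jump and Cantor sets of $w$: $|D^j w|(\partial A)=0$ since the jump part is absolutely continuous with respect to $\mathcal H^{n-1}|_{J_w}$, and $|D^c w|(\partial A)=0$ since the Cantor part vanishes on sets of finite $\mathcal H^{n-1}$-measure. The absolutely continuous part vanishes too, since $w\equiv 0$ on $\R^n\setminus A\supset\partial A$ forces $\nabla w=0$ $\mathcal L^n$-a.e.\ on $\partial A$ by a Stampacchia-type argument. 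The step I expect to be the main obstacle is precisely this Cantor-part vanishing: it rests on the $\mathcal H^{n-1}$-a.e.\ trace-zero statement (a purely volumetric Whitney estimate on $A$-side neighbourhoods of $\partial A$ is insufficient), and one has to track several layers of exceptional $\mathcal H^{n-1}$-null sets to close the argument cleanly.
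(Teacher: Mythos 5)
Your construction of the operator and your proof of the norm bound \eqref{eq:normineq} (and of the $BV_l\to L^{1,1}$ boundedness) coincide with the paper's, and that part is fine. The gap is in your treatment of \eqref{eq:boundaryzero}, in two places.

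First, the claim that $u$ admits an approximate limit at $\mathcal H^{n-1}$-a.e.\ point of $\partial A$ is false: for $u\in BV(B)$ the approximate limit fails precisely on $S_u$, and $\mathcal H^{n-1}(S_u)$ need not vanish --- $J_u$ can meet $\partial A$ in a set of positive (even infinite) $\mathcal H^{n-1}$-measure. This is not a borderline case but the typical one in the intended application, where $u$ is (an extension of) a characteristic function $\chi_E$ and $\partial^M E$ may cling to $\partial A$. At such points your argument ``$u_{Q_j}\to\tilde u(y_0)$, hence $S_{B,A}u$ matches the $A$-side limit of $u$'' has no starting point. What is actually needed, and what the paper proves (its Lemma on the boundary behaviour of $S_{\mathcal W}$), is the weaker averaged statement $\frac{1}{|B(x,r)|}\int_{B(x,r)\cap A}|S_{\mathcal W}u-u|\,dy\to 0$ for $\mathcal H^{n-1}$-a.e.\ $x\in\partial A$, established \emph{without} assuming any limit of $u$ exists: one argues by contradiction with a $5r$-covering of the exceptional set $F\subset\partial A$, bounds each ball's contribution by the Whitney--Poincar\'e estimate, and uses that $\|Du\|\bigl(B(F,C\varepsilon)\cap A\bigr)\to 0$ as $\varepsilon\to 0$ because $F$ is disjoint from the open set $A$. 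Your pointwise-trace route cannot be repaired by passing to one-sided traces either, since for a general open $A$ the inner trace on $\partial A$ need not exist $\mathcal H^{n-1}$-a.e.

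Second, even granting the averaged trace-zero statement, your passage to $\|Dw\|(\partial A)=0$ via the decomposition $Dw=D^aw+D^jw+D^cw$ is incomplete at exactly the point you flag: the Cantor part vanishes on sets that are $\sigma$-finite with respect to $\mathcal H^{n-1}$, but the topological boundary of a general open set $A$ need not be $\sigma$-finite (it can even have positive Lebesgue measure), so that property does not apply to $\partial A$. One could try to close this with the further fact that $|D^cw|\bigl(\tilde w^{-1}(N)\bigr)=0$ for $\mathcal L^1$-null $N$, using $\tilde w=0$ at $\mathcal H^{n-1}$-a.e.\ point of $\partial A$, but you have not done so. The paper avoids the fine decomposition of $Dw$ altogether: it applies the coarea formula to $w=S_{B,A}u-u$ (zero-extended) and shows, directly from the averaged estimate, that every superlevel set $E_t$ with $t>0$ has density $0$, and every $E_t$ with $t<0$ has density $1$, at $\mathcal H^{n-1}$-a.e.\ point of $\partial A$; hence $\mathcal H^{n-1}(\partial^M E_t\cap\partial A)=0$ for $t\ne 0$ and $\|Dw\|(\partial A)=\int_{-\infty}^{\infty}\mathcal H^{n-1}(\partial^M E_t\cap\partial A)\,dt=0$. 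You should replace both steps accordingly.
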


Recall that $L^{1,1}(A)=\{u\in L^{1}_{loc}(A):\, \nabla u\in L^{1}(A)\}$ stands for the homogeneous Sobolev space endowed with the seminorm $\|u\|_{L^{1,1}(A)}=\|\nabla u\|_{L^{1}(A)}$.

Let us briefly explain how the operator $S_{B,A}$ is constructed. We first take a Whitney decomposition of the open set $A$ and a partition of unity based on it. The operator on a $BV$-function $u$ is then defined as the sum of $u$ restricted to the complement of $A$ and the average values of $u$ in each Whitney cube of $A$ times the associated partition function.
This way, we immediately have that the function $S_{B,A}$ is left unchanged in the complement of $A$, and that in $A$ it is smooth. The inequality 
\eqref{eq:normineq} will follow in a standard way from the Poincar\'e inequality for $BV$-functions, whereas for showing \eqref{eq:boundaryzero} we will show that the average difference between $u$ and $S_{B,A}u$ near $\mathcal H^{n-1}$-almost every boundary point of $A$ tends to zero as we get closer to the point.
\medskip

Let us now give the definition of the operator doing the smoothing part.
Suppose $A\subset \mathbb R^n$ is an open set, not equal to the entire space $\R^n$. Let $\mathcal W = \{Q_i\}_{i=1}^\infty$ be the standard \emph{Whitney decomposition} of $A$, by which we mean that it satisfies the following properties:
\begin{itemize}
    \item[(W1)] Each $Q_i$ is a dyadic cube inside $A$.
    \item[(W2)] $A=\bigcup_i Q_i$ and for every $i \ne j$ we have $\text{int}(Q_i) \cap \text{int}(Q_j) = \emptyset$.
    \item[(W3)] For every $i$ we have $\ell(Q_i) \le \dist(Q_i,\partial A)\le 4\sqrt{n} \ell(Q_i)$,
    \item[(W4)]  If $Q_i\cap Q_j \ne \emptyset$, we have $\frac{1}{4}\ell(Q_i) \le  \ell(Q_j)\le 4\ell(Q_i)$.
\end{itemize}
The reader can find a proof of the existence of such a dyadic decomposition of the set $A$ in \cite[Chapter VI]{stein}.

For a given set $A$ and its Whitney decomposition $\mathcal W$ we take a partition of unity $\{\psi_i\}_{i=1}^\infty$ so that for every $i$ we have $\psi_i \in C^{\infty}(\mathbb R^n)$, {$\text{spt}(\psi_i)=\{x\in\R^n:\,\psi_i(x)\neq 0\} \subset B(Q_i,\frac18\ell(Q_i))$,} $\psi_i \ge 0$, $|\nabla \psi_i| \le C \ell(Q_i)^{-1}$ with a constant $C$ depending only on $n$, and 
\[
\sum_{i=1}^\infty \psi_i = \chi_A.
\]
With the partition of unity we then define for any $u \in BV(A)$ a function
\begin{equation}\label{eq:SWdef}
 S_{\mathcal W}u = \sum_{i=1}^\infty u_{Q_i}\psi_i.
\end{equation}

Let us start by showing that $S_{\mathcal W}$ maps to $W^{1,1}(A)$ boundedly. Even though we could obviously equivalently use the BV norm also on the target, we prefer to write it as the $W^{1,1}$-norm in order to underline the spaces where the operator will be used.

\begin{lemma}\label{lma:BVtoSobo}
 Let $S_{\mathcal W}$ be the operator defined in \eqref{eq:SWdef}. Then for any $u \in BV(A)$ we have $S_{\mathcal W}u \in C^\infty(A)$ and $\|S_{\mathcal W}u\|_{W^{1,1}(A)} \le C\|u\|_{BV(A)}$ with a constant $C$ depending only on $n$.
\end{lemma}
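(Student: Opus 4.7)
The plan is to exploit the bounded-overlap property of the Whitney decomposition: on any fixed cube $Q_j$ only a uniformly bounded number of the functions $\psi_i$ are nonzero, and the corresponding cubes $Q_i$ all have side length comparable to $\ell(Q_j)$. Hence a Whitney cube plus its "neighbors" gives a natural chunk on which I can run both the $L^1$ and Poincaré estimates, and then I sum over $j$ using the bounded overlap to recover $\|u\|_{BV(A)}$ on the right-hand side.

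Smoothness is essentially free: on every point $x\in A$ the sum $\sum_i u_{Q_i}\psi_i(x)$ is locally a finite sum of $C^\infty$ functions, so $S_{\mathcal W}u\in C^\infty(A)$. For the $L^1$-bound on $Q_j$ I would estimate $|S_{\mathcal W}u|\le \sum_{i\,:\,\text{spt}(\psi_i)\cap Q_j\ne\emptyset}|u_{Q_i}|$, integrate over $Q_j$, use $|u_{Q_i}|\le C|Q_j|^{-1}\int_{Q_i}|u|$ (because $\ell(Q_i)\asymp \ell(Q_j)$), and then sum in $j$. Each $Q_i$ appears in boundedly many neighbor families by (W4), so $\sum_j\int_{Q_j}|S_{\mathcal W}u|\le C\|u\|_{L^1(A)}$.

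The gradient estimate is the heart of the argument and uses the standard "$\sum\nabla\psi_i=0$" trick. For $x\in Q_j$,
\[
\nabla S_{\mathcal W}u(x)=\sum_i u_{Q_i}\nabla\psi_i(x)=\sum_i\bigl(u_{Q_i}-u_{Q_j}\bigr)\nabla\psi_i(x),
\]
because $\sum_i\psi_i\equiv 1$ on $A$ gives $\sum_i\nabla\psi_i\equiv 0$. Using $|\nabla\psi_i|\le C\ell(Q_i)^{-1}\asymp C\ell(Q_j)^{-1}$ and the bounded cardinality of the index set, it then suffices to bound $|u_{Q_i}-u_{Q_j}|$ by $C\ell(Q_j)^{1-n}\|Du\|(U_j)$ for a bounded neighborhood $U_j\supset Q_j$.

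The main obstacle is precisely this last inequality: $\psi_i$ may be nonzero on $Q_j$ while $Q_i$ and $Q_j$ do not share a face (they may not even touch), so the Poincaré inequality \eqref{Poincare} does not apply directly to $\text{int}(Q_i\cup Q_j)$. I would handle it by chaining: for any relevant pair $(Q_i,Q_j)$ produce a chain $Q_i=Q_{k_0},Q_{k_1},\dots,Q_{k_m}=Q_j$ of Whitney cubes of length $m\le m_0(n)$ whose consecutive members share an $(n-1)$-dimensional face (so $\text{int}(Q_{k_\ell}\cup Q_{k_{\ell+1}})$ is a connected Lipschitz domain with side lengths comparable to $\ell(Q_j)$); the existence of such a chain with a universal bound on $m$ follows from (W3)--(W4) and the fact that $\text{spt}(\psi_i)\subset B(Q_i,\tfrac18\ell(Q_i))$. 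Applying \eqref{Poincare} to each link and telescoping gives $|u_{Q_i}-u_{Q_j}|\le C\ell(Q_j)^{1-n}\|Du\|(U_j)$, so integrating over $Q_j$ yields $\int_{Q_j}|\nabla S_{\mathcal W}u|\le C\|Du\|(U_j)$. Summing in $j$ and using the bounded overlap of the enlarged families $\{U_j\}$ delivers $\|\nabla S_{\mathcal W}u\|_{L^1(A)}\le C\|Du\|(A)$, which combined with the $L^1$ bound proves the lemma.
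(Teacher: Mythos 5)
Your proposal is correct and follows essentially the same route as the paper: the partition-of-unity identity $\sum_i\nabla\psi_i=0$ together with $|\nabla\psi_i|\le C\ell(Q_i)^{-1}$, the $(1,1)$-Poincar\'e inequality on unions of neighbouring Whitney cubes, and bounded overlap to sum up. The only divergence is your chaining through face-adjacent cubes: note first that the support condition $\mathrm{spt}(\psi_i)\subset B(Q_i,\tfrac18\ell(Q_i))$ already forces the relevant cubes to touch (so "they may not even touch" does not arise), but your chain is still a legitimate--indeed more careful--way to justify the Poincar\'e step, since the paper applies \eqref{Poincare} directly to $\mathrm{int}(Q_i\cup Q_j)$ for any pair of touching cubes, which strictly requires them to share a face for the union's interior to be connected.
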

\begin{proof}
 By (W2) and the fact that $\text{spt}(\psi_i) \subset B(Q_i,\frac18\ell(Q_i))$ for every $i$, we know that $\text{spt}(\psi_i) \cap \text{spt}(\psi_j) \ne \emptyset$ implies
 that $Q_i \cap Q_j \ne \emptyset$. Therefore, any point in $A$ has a neighbourhood where $S_{\mathcal W}u$ is defined as a sum of finitely many $C^\infty$-functions. Consequently, $S_{\mathcal W}u \in C^\infty(A)$.
  For the $L^1$-norm of the function we can estimate
  \[
  \|S_{\mathcal W}u\|_{L^1(A)} \le \sum_{i=1}^\infty \|u_{Q_i}\psi_i\|_{L^1(A)}
  = \sum_{i=1}^\infty |u_{Q_i}|\|\psi_i\|_{L^1(A)} \le \sum_{i=1}^\infty |u_{Q_i}|2^n\ell(Q_i)^n = 2^n \|u\|_{L^1(A)}.
  \]
  For the estimate on the $L^1$-norm of the gradient we start with an estimate via the $(1,1)$-Poincar\'e inequality  \eqref{Poincare}
  \begin{align*}
   \|\nabla(S_{\mathcal W}u)\|_{L^1(Q_i)} & \le \sum_{Q_j \cap Q_i \ne \emptyset}|u_{Q_i}-u_{Q_j}|\|\nabla \psi_j\|_{L^1(A)}\\
   &\le \sum_{Q_j \cap Q_i \ne \emptyset}|u_{Q_i}-u_{Q_j}|C\ell(Q_j)^{n-1}\\
   & \le C\sum_{Q_j \cap Q_i \ne \emptyset}\ell(Q_j)^{-1}\int_{Q_i \cup Q_j}|u_{Q_i}-u(y)| + |u(y)-u_{Q_j}|\,dy\\
    & \le C\sum_{Q_j \cap Q_i \ne \emptyset}\ell(Q_j)^{-1}\left(2\int_{Q_i \cup Q_j}|u_{Q_i\cup Q_j}-u(y)| +  2\int_{Q_i \cup Q_j}|u_{Q_i\cup Q_j}-u_{Q_j}|\,dy\right)\\ 
   & \le C\sum_{Q_j \cap Q_i \ne \emptyset}(\|Du\|(Q_i\cup Q_j)),
  \end{align*}
  which then gives, by summing over all $i$, and noticing that in the final double sum the sets $Q_i\cup Q_j$ have finite overlap with a constant depending only on $n$,
  \begin{equation}\label{bound.tot.var.}
  \begin{split}
  \|\nabla(S_{\mathcal W}u)\|_{L^1(A)} & = \sum_{i=1}^\infty \|\nabla(S_{\mathcal W}u)\|_{L^1(Q_i)} \\
 &  \le C \sum_{i=1}^\infty\sum_{Q_j \cap Q_i \ne \emptyset}(\|Du\|(Q_i\cup Q_j)) \\
 &\leq C \|Du\|(A).
  \end{split}
\end{equation}
This concludes the proof of the lemma.
\end{proof}

The next lemma gives the crucial boundary behaviour that will imply \eqref{eq:boundaryzero}.

\begin{lemma}
 For the operator $S_{\mathcal W}$ defined in \eqref{eq:SWdef} and for any $u \in BV(A)$ we have
\begin{equation}\label{eq:boundaryagain}
\lim_{r\searrow 0}\frac{1}{|B(x,r)|}\int_{B(x,r)\cap A}|S_{\mathcal W}u(y) - u(y)|\,d y =0
\end{equation}
for $\mathcal H^{n-1}$-almost every point $x \in \partial A$.
\end{lemma}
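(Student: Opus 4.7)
The plan is to first reduce the pointwise estimate of $|S_{\mathcal W}u-u|$ on a Whitney cube to the total variation on a neighbourhood, then sum these estimates over the cubes meeting $B(x,r)$, and finally use a Vitali-type covering argument to see that the resulting bound tends to zero at $\mathcal H^{n-1}$-almost every boundary point.

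First, fix $y \in Q_i$. Using that $\sum_j \psi_j(y) = 1$ in $A$ and that $\psi_j(y) \ne 0$ forces $Q_j \cap Q_i \ne \emptyset$,
\[
|S_{\mathcal W}u(y)-u(y)| = \Bigl|\sum_{Q_j \cap Q_i \neq \emptyset}(u_{Q_j}-u(y))\psi_j(y)\Bigr| \leq \sum_{Q_j \cap Q_i \neq \emptyset}|u_{Q_j}-u(y)|.
\]
Integrating over $Q_i$, adding and subtracting $u_{Q_i\cup Q_j}$, and applying the Poincar\'e inequality \eqref{Poincare} on the connected union $Q_i \cup Q_j$ (whose sidelengths are comparable by (W4)), I would derive
\[
\int_{Q_i}|S_{\mathcal W}u(y)-u(y)|\,dy \leq C \ell(Q_i)\sum_{Q_j \cap Q_i \neq \emptyset}\|Du\|(Q_i \cup Q_j).
\]

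Next, fix $x\in\partial A$ and $r>0$. For any Whitney cube $Q_i$ meeting $B(x,r)$, property (W3) gives $\ell(Q_i)\leq \dist(Q_i,\partial A)\leq r$, so $Q_i\subset B(x,Cr)$ and likewise every neighbour $Q_j$ of $Q_i$ lies in $B(x,Cr)$ for some dimensional constant $C$. Summing the previous estimate over all $Q_i$ intersecting $B(x,r)$, and using that the unions $Q_i\cup Q_j$ have bounded overlap, I obtain
\[
\int_{B(x,r)\cap A}|S_{\mathcal W}u-u|\,dy \leq C r\,\|Du\|(B(x,Cr)),
\]
and therefore
\[
\frac{1}{|B(x,r)|}\int_{B(x,r)\cap A}|S_{\mathcal W}u-u|\,dy \leq \frac{C\,\|Du\|(B(x,Cr))}{r^{n-1}}.
\]

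The main step is then to verify that the right-hand side tends to $0$ as $r\searrow 0$ for $\mathcal H^{n-1}$-a.e.\ $x\in\partial A$. Regard $\mu:=\|Du\|$ as a finite Radon measure on $\mathbb R^n$ (extended by $0$ outside $A$), so that $\mu(\partial A)=0$. For $t>0$ consider
\[
F_t = \Bigl\{x\in\partial A \,:\, \limsup_{r\searrow 0}\frac{\mu(B(x,Cr))}{r^{n-1}}>t\Bigr\}.
\]
Given $\varepsilon>0$, outer regularity of $\mu$ together with $\mu(\partial A)=0$ produces an open set $U\supset \partial A$ with $\mu(U)<\varepsilon$. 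For each $x\in F_t$ and each $\delta>0$ there exists $r_x\in(0,\delta)$ with $B(x,Cr_x)\subset U$ and $\mu(B(x,Cr_x))>tr_x^{n-1}$. The $5r$-covering lemma applied to the fine cover $\{B(x,Cr_x)\}_{x\in F_t}$ yields a disjoint subfamily $\{B(x_k,Cr_k)\}$ with $F_t\subset\bigcup_k B(x_k,5Cr_k)$, whence
\[
\mathcal H^{n-1}_{10C\delta}(F_t) \leq \sum_k (10Cr_k)^{n-1} \leq \frac{C'}{t}\sum_k \mu(B(x_k,Cr_k)) \leq \frac{C'\varepsilon}{t}.
\]
Letting $\varepsilon,\delta\searrow 0$ gives $\mathcal H^{n-1}(F_t)=0$, and taking a countable union over $t=1/k$ yields the desired conclusion.

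The only real difficulty I anticipate is this last density step: once the Poincar\'e bound and the geometric reduction to $B(x,Cr)$ are in place, the crux is to convert the condition $\mu(\partial A)=0$ of the Radon measure $\mu=\|Du\|$ into the $\mathcal H^{n-1}$-a.e.\ vanishing of the $(n{-}1)$-dimensional upper density on $\partial A$, for which outer regularity combined with the $5r$-covering lemma is the natural tool.
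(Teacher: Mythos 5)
Your proposal is correct and rests on essentially the same machinery as the paper's proof: the $(1,1)$-Poincar\'e estimate on Whitney cubes giving $\int_{Q_i}|S_{\mathcal W}u-u|\le C\ell(Q_i)\sum_{Q_j\cap Q_i\ne\emptyset}\|Du\|(Q_i\cup Q_j)$, followed by a $5r$-covering argument over the exceptional subset of $\partial A$. The only organizational difference is that you first isolate the clean intermediate bound $\frac{1}{|B(x,r)|}\int_{B(x,r)\cap A}|S_{\mathcal W}u-u|\le C\,\|Du\|(B(x,Cr))/r^{n-1}$ and then reduce to the standard fact that a finite Radon measure vanishing on $\partial A$ has zero upper $(n-1)$-density $\mathcal H^{n-1}$-a.e.\ there (proved via outer regularity), whereas the paper argues by contradiction directly on the set where the limit fails and uses $\|Du\|(B(F,c\varepsilon)\cap A)\searrow 0$; both versions are sound and yield the same conclusion.
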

\begin{proof}
Suppose \eqref{eq:boundaryagain} fails on a set $F\subset \partial A$ with $\mathcal H^{n-1}(F) > 0$. Without loss of generality, we may assume $F$ compact.
By going to a subset of $F$ if needed, we may further assume that there exists a constant $\delta > 0$ so that
\begin{equation*}
\limsup_{r\searrow 0} \frac{1}{|B(x,r)|}\int_{B(x,r)\cap A}|S_{\mathcal W}u(y) - u(y)|\,d y > \delta
\end{equation*}
for every $x \in F$.

Let $\varepsilon>0$. By the $5r$-covering lemma there exists a disjointed countable collection $\{B(x_i,r_i)\}_{i \in I}$ so that $x_i\in F$,
  $r_i<\varepsilon$ for all $i$,
\begin{equation}\label{eq:delta}
|B(x_i,r_i)|\leq \frac{1}{\delta}\int_{B(x_i,r_i)\cap A}|S_{\mathcal W}u(y) - u(y)|\,d y
\end{equation}
and
\[
F \subset \bigcup_{i\in I} B(x_i,5r_i).
\]
Similarly as in the proof of Lemma \ref{lma:BVtoSobo}, we first estimate in a Whitney cube $Q \in \mathcal W$ using the (1,1)-Poincar\'e inequality \eqref{Poincare}
\begin{equation}\label{eq:poincareasalways}
\begin{split}
    \int_{Q}|S_{\mathcal W}u(y) - u(y)|\,d y 
    & =  \int_{Q}\left|\sum_{Q_i \cap Q \ne \emptyset}(u_{Q_i}\psi_i(y) - u(y)\psi_i(y))\right|\,d y\\
    &\le \sum_{Q_i \cap Q \ne \emptyset} \int_Q|u_{Q_i} - u(y)|\,d y\\
    &\le \sum_{Q_i \cap Q \ne \emptyset} \int_{Q\cup Q_i}|u_{Q_i} - u(y)|\,d y\\
    & \le C \ell(Q) \sum_{Q_i \cap Q \ne \emptyset} \|Du\|(Q\cup Q_i).
\end{split}
\end{equation}
By the property (W3) of the Whitney decomposition, we conclude that if $Q \in \mathcal W$ is such that $Q \cap B(x_i,r_i) \ne \emptyset$, we have 
\[
\ell(Q) \le  \dist(Q,\partial A) \le \dist(Q,x_i) < r_i,
\]
and hence 
\[
Q \subset B(x_i,(\sqrt{n}+1)r_i) \subset B(F,(\sqrt{n}+1)\varepsilon).
\]
Similarly, for the same $Q$, if $Q_i \cap Q \ne \emptyset$ for some $Q_i \in \mathcal W$, by (W4), we get
\[
\ell(Q_i) \le 4\ell(Q),
\]
and so
\[
Q_i \subset B(x_i,(5\sqrt{n}+1)r_i) \subset B(F,(5\sqrt{n}+1)\varepsilon).
\]
Now, using the definition of the Hausdorff content, the inequality \eqref{eq:delta}, the estimate \eqref{eq:poincareasalways}, and the above consideration for the cubes $Q$, we get
\begin{align*}
\mathcal H_{5\epsilon}^{n-1}(F) & \le C \sum_{i\in I} r_i^{n-1}
\le C \sum_{i \in I} \frac{|B(x_i,r_i)|}{r_i}\\
& \le C \sum_{i \in I} \frac{1}{\delta r_i}\int_{B(x_i,r_i)\cap A}|S_{\mathcal W}u(y) - u(y)|\,d y\\
& \le C \sum_{Q\in\mathcal{W}}\sum_{i \in I} \frac{1}{\delta \ell(Q)}\frac{\ell(Q)}{r_i}\int_{Q}\chi_{B(x_i,r_i)}(y)|S_{\mathcal W}u(y) - u(y)|\,d y\\
& \le \frac{C}{\delta} \left(\sum_{Q \cap B(F,(\sqrt{n}+1)\varepsilon)\ne \emptyset}
\frac{1}{\ell(Q)}\int_{Q}|S_{\mathcal W}u(y) - u(y)|\,d y\right)\\
& \le \frac{C}{\delta} \left(\sum_{Q \cap B(F,(\sqrt{n}+1)\varepsilon)\ne \emptyset}
\sum_{Q_i \cap Q \ne \emptyset} \|Du\|(Q\cup Q_i)\right)\\
& \le \frac{C}{\delta}\|Du\|(B(F,(5\sqrt{n}+1)\epsilon) \cap A) \searrow 0
\end{align*}
as $\varepsilon \searrow 0$. 
Thus
\[
\mathcal H^{n-1}(F) = 0,
\]
giving a contradiction and concluding the proof.
\end{proof}

With the previous two lemmas we can now prove the main theorem of the section.

\begin{proof}[Proof of Theorem \ref{thm:smoothing}]

Let $S_{\mathcal W}$ be the operator defined in \eqref{eq:SWdef} and suppose that $u \in BV(B)$ is given. We define
\[
S_{B,A}u = u|_{B\setminus A} + S_{\mathcal W}u|_A.
\]

Consider $S_{B,A}u-u\in L^1(B)$ for which, by \eqref{eq:boundaryagain}, we have
\begin{equation}\label{prop.zero.ext.}
\lim_{r\searrow 0}\frac{1}{|B(x,r)|}\int_{B(x,r)\cap A}|S_{B,A}u(y)-u(y)|\,dy =0
\end{equation}
for $\mathcal{H}^{n-1}$-almost every $x \in \partial A$. Observe that $S_{B,A}u-u=0$ on $B\setminus A$.

Let us introduce the superlevel sets $E_t=\{y\in \mathbb R^n:\, S_{B,A}u(y)-u(y)>t\}$ for every $t\in\R$, where $S_{B,A}u-u$ is defined in the whole $\mathbb R^n$ via a zero-extension. We want to show that $\mathcal H^{n-1}(\partial^M E_t\cap \partial A)=0$ for almost every $t\in\R$ and the equality \eqref{eq:boundaryzero} will follow by a simple application of the coarea formula.  We proceed as follows.

In the case that $t<0$, observe that for every $y\in A\setminus E_t$ we have $|S_{B,A}u(y)-u(y)|\geq |t|$, then for $\mathcal H^{n-1}$-almost all $x\in\partial A$, by \eqref{prop.zero.ext.},
\begin{align*}
    \overline{D}(A\setminus E_t,x) & = \limsup_{r\searrow 0}\frac{|A\setminus E_t\cap B(x,r)|}{|B(x,r)|} \\
    & \leq  \limsup_{r\searrow 0}\frac{1}{|t||B(x,r)|}\int_{A\cap B(x,r)} |S_{B,A}u(y)-u(y)|\,dy =0 .
\end{align*}
This, together with the fact that $B\setminus A\subset E_t $, means that the set $E_t$ has density $1$ at $\mathcal H^{n-1}$-almost all points $x\in \partial A$.

If we take $t>0$, for every $y\in E_t$ we have $|S_{B,A}u(y)-u(y)|\geq t$, and then  for $\mathcal H^{n-1}$-almost all $x\in\partial A$, again by \eqref{prop.zero.ext.},
\begin{align*}\overline{D}( E_t,x) & = \limsup_{r\searrow 0}\frac{| E_t\cap B(x,r)|}{|B(x,r)|} \\
& \leq  \limsup_{r\searrow 0}\frac{1}{t |B(x,r)|}\int_{A\cap B(x,r)} |S_{B,A}u(y)-u(y)|\,dy =0 .
\end{align*}
This means, using  $E_t\subset A$, that the set $E_t$ has density $0$ at $\mathcal H^{n-1}$-almost all points $x\in\partial A$.

From these previous observations we  deduce that $\mathcal H^{n-1}(\partial^M E_t\cap \partial A)=0$ for all $t\neq 0$. 
We therefore  obtain \eqref{eq:boundaryzero}, applying the coarea formula,
$$\|D(S_{B,A}u-u)\|(\partial A)=\int^{\infty}_{-\infty} \mathcal{H}^{n-1}(\partial ^M E_t\cap \partial A)\,dt=0. $$

We now combine this with Lemma \ref{lma:BVtoSobo} to obtain \eqref{eq:normineq} and hence also that $S_{B,A}u \in BV(B)$. We get
\begin{align*}
\|D(S_{B,A}u)\|(B) & \le \|Du\|(B) + \|D(S_{B,A}u)-u\|(B)\\
& = \|Du\|(B) + \|D(S_{B,A}u)-u\|(A)\\
& \le \|Du\|(B) + \|D(S_{\mathcal W}u|_A)\|(A) + \|Du\|(A)\\
& \le \|Du\|(B) + C\|Du|_A\|(A) + \|Du\|(A)\\
& \le (C+2)\|Du\|(B)
\end{align*}
and conclude the proof.
 \end{proof}

\subsection{Proof of Theorem \ref{thm:mainRn}}

In this section we will prove Theorem \ref{thm:mainRn} with the help of Theorem \ref{thm:smoothing}.
Recall that we are claiming that for a bounded domain $\Omega\subset\R^n$ the following are equivalent:
\begin{enumerate}
    \item $\Omega$ is a $W^{1,1}$-extension domain.
    \item $\Omega$ is a strong $BV$-extension domain.
    \item $\Omega$ has the strong extension property for sets of finite perimeter.
\end{enumerate}

We will show the equivalence by showing the implications
\[
\text{(1)} \Longrightarrow \text{(3)} \Longrightarrow \text{(2)} \Longrightarrow \text{(1)}.
\]

\begin{proof}[Proof of the implication (1) $\Longrightarrow$ (3)] We start with the assumption that $\Omega$ is a  bounded $W^{1,1}$-extension domain. In particular, it is known that $\Omega$ is also a $L^{1,1}$-extension domain (see \cite{K1990}). That is, there exists an extension operator  $T\colon L^{1,1}(\Omega) \to L^{1,1}(\mathbb R^n)$  with $\|\nabla (Tu)\|_{L^1(\R^n)}\leq \|T\| \|\nabla u\|_{L^1(\Omega)}$ for every $u\in L^{1,1}(\Omega)$. Since $\Omega$ is bounded, after multiplying with a suitable Lipschitz cutoff-function we may assume that $Tu\in L^1(\R^n)$ and still keep the control on the gradient norm.

We claim that $\Omega$ has the strong extension property for sets of finite perimeter. Thus, let $E \subset \Omega$ be a set of finite perimeter in $\Omega$. We need to find a set $\widetilde E \subset \mathbb R^n$ so that (PE1)--(PE3) of Definition \ref{def:strongperi} hold.

Towards this, let $S_{\Omega,\Omega} \colon BV(\Omega) \to W^{1,1}(\Omega)$ be the operator given by 
Theorem \ref{thm:smoothing}.
We now define a function $v \in W^{1,1}(\mathbb R^n)$ by 
\[
v = TS_{\Omega,\Omega}\chi_E.
\]
By truncating the function if needed, we may assume that $0 \le v\le 1$.

 Applying the  coarea formula (Theorem \ref{thm:coarea}) for the function $v$, 
\[
 \int_{0}^1 P(\{v>t\},\mathbb R^n)\, dt= \|Dv\|({\mathbb R^n}) = \int_{\R^n} |\nabla v(y)|\,dy<\infty .
\]
This gives, in particular, that there exists a set $I\subset [0,1]$ 
with $\mathcal H^1(I) \ge \frac12$ for which for every $t \in I$
we have
\begin{equation}\label{control.per.ext.}
\begin{split}
P(\{v>t\},\mathbb R^n) &\le 2\|Dv\|({\mathbb R^n})=2\int_{\R^n} |\nabla v(y)|\,dy \leq 2\|T\|\, \|\nabla S_{\Omega,\Omega} \chi_E\|_{L^{1}(\Omega)}\\
& \leq 2 \|T\|\, C\|D\chi_E\|(\Omega)=2C\|T\|\, P(E,\Omega).
\end{split}
\end{equation}
In the penultimate inequality we are using \eqref{bound.tot.var.}.

By the measure density (Proposition \ref{meas.dens:BV}) we have $|\partial\Omega|=0$.
This together with the fact that $\nabla v \in L^{1}(\mathbb R^n)$, gives
\[
\|D v\|(\partial\Omega) = 0,
\]
and by  \eqref{eq:boundaryzero} 
\[
 \|D(\chi_{E} - v\chi_{\Omega})\|(\partial\Omega) = \|D(\chi_{E} - S_{\Omega,\Omega}\chi_{E})\|(\partial\Omega) =0,
\]
where $\chi_{E} - S_{\Omega,\Omega}\chi_{E}$ is understood to be defined in the whole $\mathbb R^n$ via a zero-extension.
Hence, again by the coarea formula 
\begin{align*}
\int_0^1\mathcal{H}^{n-1}(\partial^M(E \cup (\{v>t\}\setminus \Omega))\cap \partial\Omega)\,dt
& = \|D(\chi_{E} + v\chi_{\mathbb R^n\setminus\Omega})\|(\partial\Omega)\\
&\le \|D v\|(\partial\Omega) + \|D(\chi_{E} - v\chi_{\Omega})\|(\partial\Omega) = 0.
\end{align*}
This gives that for almost every $t \in [0,1]$ we have
\begin{equation}\label{eq:boundaryintersectionzero}
 \mathcal{H}^{n-1}(\partial^M(E \cup (\{v>t\}\setminus \Omega))\cap \partial\Omega) = 0.
\end{equation}

Let us pick $t \in I\subset [0,1]$ so that both \eqref{control.per.ext.} and
\eqref{eq:boundaryintersectionzero} hold, and define
\[
\widetilde E = E \cup (\{v>t\}\setminus \Omega).
\]

Now, it is straightforward that condition (PE1) holds. The equation \eqref{eq:boundaryintersectionzero} gives (PE3), and together with \eqref{control.per.ext.} it also implies 
\begin{align*}
 P(\widetilde E,\mathbb R^n) &= \mathcal H^{n-1}(\partial^M\widetilde E)\\
 &\le \mathcal H^{n-1}((\partial^M E)\cap \Omega)
 +\mathcal H^{n-1}((\partial^M\widetilde E) \cap \partial\Omega)
 +\mathcal H^{n-1}(\partial^M\{v > t\})\\
 &\le
 P(E,\Omega) + 2C\|T\|P(E,\Omega)\end{align*}
proving (PE2).
\end{proof}

\begin{proof}[Proof of the implication (3) $\Longrightarrow$ (2)]
{By assumption $\Omega$ has the strong extension property for sets of finite perimeter, so there exists a constant $C>0$ such that for any set $E\subset\Omega$ of finite perimeter there exists a set $\widetilde E\subset \R^n$ such that (PE1)--(PE3) are satisfied.}

Take a function $u\in BV(\Omega)$ and let $B \supset \overline{\Omega}$ be a large enough ball.
Without loss of generality, we may assume that $u \colon \Omega \to [0,1]$. Let us write $E_t = \{u \ge t\}=\{y\in\Omega:\, u(y)\geq t\}$ for the superlevel sets for each $t$. Since $u \in BV(\Omega)$, by the coarea formula, $P(E_t,\Omega) < \infty$ for almost every $t \in [0,1]$. For these $t$, we select 
$\tilde E_t$ to be a strong perimeter extension of $E_t$.
For convenience, for the remaining $t$ we define $\tilde E_t = E_t$. Notice that these are not strong perimeter extensions of $E_t$. This will not pose a problem for us, since we will not use these values of $t$ in the construction below.

Before going to the actual proof, let us note that
if the strong perimeter extensions $\tilde E_t$ could be chosen so that $(t,x) \mapsto \chi_{\tilde E_t}(x)$ is measurable, by Fubini's theorem we would obtain
\[
\|DTu\| \le \int_0^1 P(\tilde E_t)\,dt
\]
for the function $Tu(x) = \mathcal H^1\{t \in [0,1]\,:\, x \in \tilde E_t\}$.
In order to circumvent the measurability issue, we proceed by defining the extension $Tu$ in a similar way, but as a limit of simple functions $u_m$.

For every $t \in [0,1]$, let us denote by $I_k(t)$ the (half-open) dyadic interval of length $2^{-k}$ containing $t$. 
For almost every $t \in [0,1]$ we then have
\begin{equation}\label{eq:tdensity}
P(E_t,\Omega) \le \limsup_{k \to \infty} 2^k\int_{I_k(t)}P(E_s,\Omega)\,ds.
\end{equation}
For almost every $t \in [0,1]$ we also have 
\[
P(\tilde E_t,\partial\Omega)= 0.
\]
Since $\partial\Omega$ is a compact set, for almost every $t$ we then have
\[
\lim_{k\to \infty}P(\tilde E_t,B(\partial\Omega,2^{-k}))= 0.
\]
Let us write for each $k,m \in \mathbb N$
\[
I_k^m = \{t \in [0,1] \,:\, P(\tilde E_t,B(\partial\Omega,2^{-k})) < 2^{-m} \text{ and \eqref{eq:tdensity} holds}\}.
\]
Notice that the sets $I_k^m$ are not necessarily measurable.
Nevertheless, since $\mathcal H^1$ is a regular outer measure, we have
\[
\mathcal H^1(I_k^m) \nearrow 1, \qquad \text{as }k \to \infty
\]
for every $m \in \mathbb N$.

We define a sequence $(k_m)_{m =1}^\infty \subset \mathbb N $ inductively as follows. First take $k_1 \in \mathbb N$ so that
\[
\mathcal H^1(I_{k_1}^1) > 1- 2^{-1}.
\]
Suppose now that $k_i$ has been defined for all $i < m$. Then we take $k_m \in \mathbb N$ so that
\begin{equation}\label{eq:smallcomplements}
\mathcal H^1\left(\bigcap_{j=i}^mI_{k_j}^j\right) > 1- 2^{-i}
\end{equation}
for all $i \le m$. Notice that this requirement can be obtained 
since \eqref{eq:smallcomplements} is with a strict inequality 
and again by outer regularity, for every $i < m$ we have
\[
\mathcal H^1\left(I_{k}^m \cap \bigcap_{j=i}^{m-1}I_{k_j}^j\right) \to
\mathcal H^1\left(\bigcap_{j=i}^{m-1}I_{k_j}^j\right)
\]
as $k \to \infty$.

Now, for $m \in \mathbb N$ we also take $l_m \in \mathbb N$ for which 
\begin{equation}\label{eq:intervalscover}
    \mathcal{H}^1 (J^m) > 1 - 2^{-m}, 
\end{equation} 
 where
\[
J^m = \left\{t \in [0,1]\,:\, P(E_t,\Omega) \le  2^{l_m+1}\int_{I_{l_m}(t)}P(E_s,\Omega)\,ds\right\}
\]
The index $l_m$ then gives us the scale at which the simple function $u_m$ is constructed.

Let us now construct the function $u_m$ for a given $m \in \mathbb N$.
For each $j \in \{1,\dots, 2^{l_m}\}$ define 
\[
i_j^m = \min\left\{i \,:\, [(j-1)2^{-l_m},j2^{-l_m}) \cap J^m \cap \bigcap_{h=i}^mI_{k_h}^h \ne \emptyset \right\}.
\]
Notice that always $i_j^m\le m+1$ since $[(j-1)2^{-l_m},j2^{-l_m}) \cap J^m \ne \emptyset$.

We then select 
\[
t_j^m \in [(j-1)2^{-l_m},j2^{-l_m}) \cap J^m \cap \bigcap_{h=i_j^m}^mI_{k_h}^h. 
\]

Next, we define
\[
u_m = \sum_{j=1}^{2^{l_m}}2^{-l_m}\chi_{\tilde E_{t_j^m}},
\]
which satisfies $0\leq u_m\leq 1$ and  $u_m\in BV(B)$.  

For every $i \in \{1,\dots,m\}$, let us denote
\[
K_i^m = \left\{j \in \{1, \dots, 2^{l_m}\}\,:\, [(j-1)2^{-l_m},j2^{-l_m}) \cap J^m \cap \bigcap_{h=i}^mI_{k_h}^h = \emptyset \right\}
\]
and
\[
B_i^m = \bigcup_{j \in K_i^m} [(j-1)2^{-l_m},j2^{-l_m}).
\]
Since $J^m$ is measurable, \eqref{eq:intervalscover} gives
$\mathcal H^1\left([0,1] \setminus J_m\right) < 2^{-m}$,
and thus, by \eqref{eq:smallcomplements}
\begin{align*}
\mathcal H^1\left(\bigcap_{h=i}^mI_{k_h}^h \cap J_m\right)
& = \mathcal H^1\left(\bigcap_{h=i}^mI_{k_h}^h \right) - \mathcal H^1\left(\bigcap_{h=i}^mI_{k_h}^h \setminus J_m\right)\\
& \ge \mathcal H^1\left(\bigcap_{h=i}^mI_{k_h}^h \right) - \mathcal H^1\left([0,1] \setminus J_m\right)\\
& > 1 - 2^{-i} - 2^{-m} \ge 1 - 2^{-i+1}.
\end{align*}
Hence, we have
\begin{equation}\label{eq:badestimate}
\mathcal H^1(B_i^m) < 2^{-i+1}.
\end{equation}

For the norm of $Du_m$, by the fact that $t_j^m \in J^m$ for every $j$, we get the estimate
\begin{align*}
\|Du_m\|(\mathbb R^n) &\le \sum_{j=1}^{2^{l_m}}2^{-l_m} P(\tilde E_{t_j^m}, \mathbb R^n) \le C \sum_{j=1}^{2^{l_m}}2^{-l_m} P(E_{t_j^m}, \Omega)\\
&\le C \sum_{j=1}^{2^{l_m}}2\int_{[(j-1)2^{-l_m},j2^{-l_m})} P(E_{s},\Omega)
= 2C\|Du\|(\Omega).
\end{align*}
Hence, there exists a subsequence of $(u_m)_{m=1}^\infty$, which converges in $L^1(B)$ to a function $v\in BV(B)$. For it, we have
\[
\|Dv\|(B) \le \limsup_{m \to \infty}\|Du_m\|(\mathbb R^n) \le 2C\|Du\|(\Omega).
\]
Moreover, clearly $v = u$ on $\Omega$.

In order to estimate $\|Dv\|(\partial\Omega)$
we observe that, for every $i \in \{1,\dots,m\}$, we have, by \eqref{eq:badestimate},
\begin{equation}\label{eq:nearboundarysmall}
\begin{split}
 \|Du_m\|(B(\partial\Omega,2^{-k_i}))
 & \le \sum_{j=1}^{2^{l_m}}2^{-l_m} P(\tilde E_{t_j}, B(\partial\Omega,2^{-k_i}))\\
 &\le \sum_{j\notin K_i^m}2^{-l_m} P(\tilde E_{t_j}, B(\partial\Omega,2^{-k_i})) + 
 \sum_{j\in K_i^m}2^{-l_m} P(\tilde E_{t_j}, \mathbb R^n)\\
 & \le 2^{-i} + C2\int_{B_i^m}P(E_s,\Omega)\,ds\\
 & \le 2^{-i} + C2\delta(2^{-i+1}),
 \end{split}
\end{equation}
where 
\[
\delta(r) = \sup\left\{\int_{A}P(E_s,\Omega)\,ds\,:\,A \subset [0,1], \mathcal H^1(A) = r \right\} \to 0
\]
as $r  \to 0$ by the absolute continuity of the integral.
Since the upper bound in \eqref{eq:nearboundarysmall} goes to zero as $i \to \infty$ independently of $m$, we have
\[
\|Dv\|(\partial\Omega) = 0.
\]
Let us assume that the function $v$ is extended as zero outside $B$. Recall that we have
$$\|Dv\|(B)\leq 2C \|Du\|(\Omega) \;\;\text{and}\;\; \|Dv\|(\partial \Omega)=0.  $$ 
In order to conclude the proof we control the $BV$-norm of the function $v$ in the whole $\R^n$ as follows.

Consider a Lipschitz function $\eta$ which takes the value $1$ on $\Omega$ and has support in $B$. Then one can check that 
$$ \|D(\eta v)\|(\R^n) \leq C \|Dv\|(B) $$ and using the Poincar\'e inequality that
$$\|\eta v\|_{L^1(\R^n)}=\| \eta v\|_{L^1(B)}\leq C\|D(\eta v)\|(B)\leq C \|Dv\|(B) .$$
Therefore we have $\|\eta v\|_{BV(\R^n)}\leq C \|Du\|_{BV(\Omega)}$, where the constant $C$ depends on the constant coming from (PE2), on $|\Omega|$ and on the constant coming from the Poincar\'e inequality.
We then can assure that $T\colon  BV(\Omega)\to BV(\R^n) \colon u \mapsto \eta v$ is an extension operator.

Obviously we still have  $\|D(\eta v)\|(\partial\Omega)=\|Dv\|(\partial\Omega)=0$. 
  Hence $\Omega$ is indeed a strong $BV$-extension domain.
\end{proof}

\begin{proof}[Proof of the implication (2) $\Longrightarrow$ (1)]

We start with a strong $BV$-extension operator $$T\colon BV(\Omega)\to BV(\R^n).$$
In particular, we know that 
\begin{equation}\label{Sing.part.=0}
\|D(Tu)\|(\partial \Omega)=0 
\end{equation}
for every $u\in BV(\Omega)$.

Let $S=S_{\R^n, (\R^n\setminus\overline \Omega)} $ be a Whitney smoothing operator given by Theorem \ref{thm:smoothing}. We assert that the operator $R \colon W^{1,1}(\Omega)\to W^{1,1}(\R^n)$ defined by $Ru(x)=(S\circ T)(u)(x)$ is a $W^{1,1}$-extension operator. 

Observe that $Ru=u$ on $\Omega$ and 
$$\|Ru\|_{BV(\R^n)}\leq C\|Tu\|_{BV(\R^n)}\leq C\|T|\|\, \|u\|_{BV(\Omega)}=C\|T\|\, \|u\|_{W^{1,1}(\Omega)}.$$
To conclude we must check that indeed $Ru\in W^{1,1}(\R^n)$, so that in particular $\|Ru\|_{BV(\R^n)}=\|Ru\|_{W^{1,1}(\R^n)}$. In order to get this let us show that the Radon measure $\|D(Ru)\|$ consists only of its absolutely continuous part, and not of its singular part. Since we already now that $Ru|_{\Omega}$ and $Ru|_{\R^n\setminus \overline\Omega}$ are $W^{1,1}$ functions we merely have to prove that $\|D(Ru)\|(\partial\Omega)=0$.  By the special properties of our smoothing operator given by \eqref{eq:boundaryzero} and by our assumption \eqref{Sing.part.=0}  we have  that $$\|D(Ru)\|(\partial \Omega)=\|D(STu -Tu+Tu)\|(\partial \Omega) \leq \|D(STu-Tu)\|(\partial\Omega)+\|D(Tu)\|(\partial\Omega)=0$$ 
and we are done.
\end{proof}

\section{Further properties of $W^{1,1}$-domains}\label{sec:corollaries}
In this section we prove some corollaries to Theorem \ref{thm:mainRn}.

\begin{corollary}\label{cor:density}
Let $\Omega \subset \mathbb R^n$ be a bounded $W^{1,1}$-extension domain. Then the set of points $x \in \partial \Omega$ with 
$\overline{D}(\Omega,x)>\frac12$ is purely $(n-1)$-unrectifiable.
\end{corollary}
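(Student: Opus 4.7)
The plan is to combine Theorem~\ref{thm:mainRn}, which yields the strong extension property for sets of finite perimeter, with Rademacher's theorem applied to Lipschitz graphs. Arguing by contradiction, suppose that the set
\[
F := \{x \in \partial \Omega \,:\, \overline{D}(\Omega, x) > 1/2\}
\]
is not purely $(n-1)$-unrectifiable. By the observation recalled in the preliminaries relating non-unrectifiability to Lipschitz graphs, after rotating coordinates if necessary there exists a Lipschitz map $f \colon \mathbb R^{n-1} \to \mathbb R$ with $\mathcal H^{n-1}(F \cap \textrm{Graph}(f)) > 0$.

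Define the upper region $U := \{(y',y_n) \in \mathbb R^n : y_n > f(y')\}$ and consider $E := \Omega \cap U$. Since $\Omega$ is bounded and $\partial^M E \cap \Omega$ is contained in $\textrm{Graph}(f) \cap \Omega$ modulo $\mathcal H^{n-1}$-null sets, the set $E$ has finite perimeter in $\Omega$. By Theorem~\ref{thm:mainRn}, $\Omega$ has the strong extension property for sets of finite perimeter, so there exists $\widetilde E \subset \mathbb R^n$ satisfying (PE1)--(PE3); in particular
\[
\mathcal H^{n-1}(\partial^M \widetilde E \cap \partial \Omega) = 0.
\]

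The heart of the argument is to show that $\mathcal H^{n-1}$-a.e.\ $x \in F \cap \textrm{Graph}(f)$ actually lies in $\partial^M \widetilde E$, which will contradict the displayed equality above. By Rademacher's theorem, at $\mathcal H^{n-1}$-a.e.\ such $x$ the map $f$ is differentiable at its base point, so that $D(U,x) = D(\mathbb R^n \setminus U, x) = 1/2$. Fix such an $x$, choose $\delta > 0$ with $\overline{D}(\Omega,x) > 1/2 + 2\delta$, and pick a sequence $r_k \searrow 0$ along which $|\Omega \cap B(x,r_k)| \ge (1/2 + \delta)|B(x,r_k)|$. For $k$ sufficiently large,
\[
|U \cap B(x,r_k)|,\ |(\mathbb R^n \setminus U) \cap B(x,r_k)| \le (1/2 + \delta/2)|B(x,r_k)|,
\]
and then the partition $\Omega \cap B(x,r_k) = (E \cap B(x,r_k)) \cup ((\Omega \setminus E) \cap B(x,r_k))$ (modulo null sets) forces both pieces to have measure at least $(\delta/2)|B(x,r_k)|$. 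Since $\widetilde E \supseteq E$ and $\mathbb R^n \setminus \widetilde E \supseteq \Omega \setminus E$ modulo null sets, both $\overline{D}(\widetilde E, x)$ and $\overline{D}(\mathbb R^n \setminus \widetilde E, x)$ are positive, so $x \in \partial^M \widetilde E$ as desired.

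The only conceptual step is the choice of which set of finite perimeter to extend: intersecting $\Omega$ with the half-space above the chosen Lipschitz graph forces the essential boundary of every strong perimeter extension to intersect the rectifiable portion of $F$ in a set of positive $\mathcal H^{n-1}$-measure, directly contradicting (PE3). The remaining verifications—the finiteness of the perimeter of $E$ in $\Omega$ and the density computations at generic graph points—are routine given the tools already developed in the paper.
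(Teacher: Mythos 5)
Your proof is correct and follows essentially the same strategy as the paper's: intersect $\Omega$ with a half-space $U$ bounded by the offending Lipschitz graph, note that $E=\Omega\cap U$ has finite perimeter in $\Omega$, invoke Theorem~\ref{thm:mainRn} to get a strong perimeter extension $\widetilde E$, and show that $\partial^M\widetilde E$ meets $\partial\Omega$ on a set of positive $\mathcal H^{n-1}$-measure, contradicting (PE3). The only cosmetic difference is in the density step: the paper invokes the De Giorgi density property $D(E,x)=\tfrac12$ at $\mathcal H^{n-1}$-a.e.\ $x\in\partial^M E$ together with a density inequality, whereas you use Rademacher's theorem to get $D(U,x)=\tfrac12$ at a.e.\ graph point and then run an explicit partition estimate showing both $E$ and $\Omega\setminus E$ have upper density at least $\delta/2$ at $x$; both deliver the same conclusion that a.e.\ point of $F\cap\mathrm{Graph}(f)$ lies in $\partial^M\widetilde E$, and your version makes the quantitative use of $\overline D(\Omega,x)>\tfrac12$ somewhat more explicit.
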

\begin{proof}
 If the set
 \[
  F = \left\{x \in \partial\Omega \,:\, \overline{D}(\Omega,x)>\frac12\right\}
 \]
 is not purely $(n-1)$-unrectifiable, there exists a Lipschitz map $f \colon \mathbb R^{n-1} \to \mathbb R$ so that, after a suitable rotation,
\[
\mathcal H^{n-1}(\textrm{Graph}(f) \cap F) > 0.
\]
Notice that 
the set $\mathbb R^n \setminus \textrm{Graph}(f)$ consists of two connected components. Select one of the components that has nonempty intersection with $\Omega$ (actually, both have) and call $E$ its restriction to $\Omega$. Then 
\[
\partial^M E \cap \Omega = \textrm{Graph}(f) \cap \Omega
\]
and so in particular $E$ has finite perimeter in $\Omega$.
Let $\widetilde E \subset \mathbb R^n$ be any set of finite perimeter with $\widetilde E \cap \Omega = E$.
Since
\[
D(E,x) = \frac12
\]
at $\mathcal {H}^{n-1}$-almost every point  $x \in \partial^M E$, and $\overline{D}(\Omega,x)>\frac12$ for every $x \in F$, we have
\[
\mathcal H^{n-1}(F \cap \partial^M  E) = \mathcal H^{n-1}(F \cap \textrm{Graph}(f) ).
\]
Using again the fact that
\[
D(E,x) = \frac12
\]
at $\mathcal {H}^{n-1}$-almost every point  $x \in \partial^M E$, and $\underline{D}(\mathbb R^n \setminus \Omega,x)< \frac12$ for every $x \in F$, we have
\[
0 < \underline{D}(E,x) \le  \underline{D}(\widetilde E,x) \le \underline{D}(\mathbb R^n\setminus  \Omega,x) + \underline{D}(E,x) < 1
\]
for $\mathcal {H}^{n-1}$-almost every point $x \in F \cap \partial^ME$. This means that there exists a set $G \subset F\cap \partial^M E$ with $\mathcal{H}^{n-1}(G)=0$ for which
\[
(F\cap \partial^M E)\setminus G\subset F\cap  \partial^M \widetilde E.
\]
Consequently,
$$ \mathcal H^{n-1}(\partial\Omega \cap \partial^M \widetilde E)  \geq \mathcal H^{n-1}(F \cap \partial^M \widetilde E)\geq \mathcal H^{n-1}(F \cap \partial^M  E) = \mathcal H^{n-1}(F \cap \textrm{Graph}(f) )> 0.$$
Hence $\Omega$ does not have the strong extension property for sets of finite perimeter ((PE3) fails), and so by Theorem \ref{thm:mainRn} it is not a $W^{1,1}$-extension domain.
\end{proof}

The next example shows that even in the plane the conclusion of Corollary \ref{cor:density} is not sufficient to imply that a bounded $BV$-extension domain is a $W^{1,1}$-extension domain.

\begin{example}\label{ex:1}
Let us construct a planar $BV$-extension domain $\Omega$ so that the upper-density of $\Omega$ at all except at  countably many  boundary-points is at most $1/2$, but the domain is not a $W^{1,1}$-extension domain.
We set
\[
\Omega = (-1,1)^2 \setminus \left((\{0\}\times [-1/2,1/2]) \cup \bigcup_{i=2}^\infty E_i \right),
\]
where, for every $i \geq 2$, we define
\begin{align*}
 E_i = \bigcup_{k=0}^{2^i-1}&\bigg( [-2^{-i+1},-2^{-i}-2^{-i-10}] \cup [2^{-i}+2^{-i-10},2^{-i+1}]\\
 &\times[-2^{-1} + k2^{-i},-2^{-1} + (k+1)2^{-i}-2^{-i-10}]\bigg).
\end{align*}
See Figure \ref{fig:example.4.2} for an illustration.
\begin{figure}
     \centering
     \includegraphics[width=0.4\textwidth]{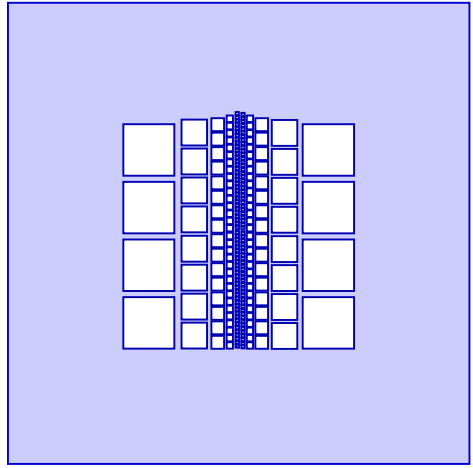}
     \psfrag{O}{$\Omega$}
     \caption{An illustration of the $BV$-extension domain $\Omega \subset \mathbb R^2$ in Example \ref{ex:1} which is not a $W^{1,1}$-extension domain. Components of the complement accumulate on the vertical line segment where the upper-density is less than $\frac12$ at almost every point.
     }
     \label{fig:example.4.2}
 \end{figure}
Now, the upper-density of $\Omega$ is clearly at most $1/2$ at all the points of the boundary $\partial \Omega$ except for the corners of the connected components of $E_i$, and the points $(0,-1/2)$ and $(0,1/2)$, which together form only a countable set. (One could remove balls instead of rectangles to get the upper-density bound for all boundary points.)

The domain $\Omega$ is a $BV$-extension domain because each removed square has a neighbourhood inside $\Omega$ from which the $BV$-function can be extended to the square with a uniform constant.
These neighbourhoods can be taken pairwise disjoint. This will result in an extension operator
\[
T \colon BV(\Omega) \to BV((-1,1)^2 \setminus \{0\}\times[-1/2,1/2]).
\]
The target set clearly admits an extension to $BV(\mathbb R^2)$.

The domain $\Omega$ is not a $W^{1,1}$-extension domain, because the set
$\{0\}\times [-1/2,1/2]$ is not purely $1$-unrectifiable, and this is the set $H$ in the following Corollary \ref{cor:unrectifiabilitynecessity}.
\end{example}

The next corollary to Theorem \ref{thm:mainRn} shows that one direction in Theorem \ref{thm:planar} holds also in higher dimensions.

\begin{corollary}\label{cor:unrectifiabilitynecessity}
Suppose that $\Omega \subset \mathbb R^n$ is a bounded $W^{1,1}$-extension domain. Let $\Omega_i$, for $i \in I$, be the connected components of $\mathbb R^n \setminus \overline{\Omega}$. Then the set 
\[
 H = \partial \Omega \setminus \bigcup_{i \in I}\overline{\Omega_i}
\]
is purely $(n-1)$-unrectifiable.
\end{corollary}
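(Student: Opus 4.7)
I argue by contrapositive. Suppose $H$ is not purely $(n-1)$-unrectifiable; then, up to a rotation, there is a Lipschitz map $f\colon\mathbb R^{n-1}\to\mathbb R$ whose graph $\Gamma$ satisfies $\mathcal H^{n-1}(\Gamma\cap H)>0$. Let $A,B$ denote the two open components of $\mathbb R^n\setminus\Gamma$. For every $x\in\partial\Omega$ Proposition~\ref{meas.dens:BV} gives $\overline D(\Omega,x)>0$, so at each $x\in\Gamma\cap H$ at least one of $\overline D(A\cap\Omega,x)$, $\overline D(B\cap\Omega,x)$ is positive; after relabeling $A$ and $B$ if necessary, we may assume
\[
F := \{x\in\Gamma\cap H : \overline D(A\cap\Omega,x)>0\}
\]
has positive $\mathcal H^{n-1}$-measure. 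Set $E=A\cap\Omega$. Since $\partial^M E\cap\Omega\subset\Gamma\cap\Omega$ has finite $\mathcal H^{n-1}$-measure (by the Lipschitz property of $\Gamma$ and the boundedness of $\Omega$), $E$ has finite perimeter in $\Omega$. By Theorem~\ref{thm:mainRn}, $\Omega$ has the strong extension property for sets of finite perimeter, so there exists $\widetilde E\subset\mathbb R^n$ obeying (PE1)--(PE3). My plan is to contradict (PE3) by showing that $\mathcal H^{n-1}$-a.e.\ point of $F$ lies in $\partial^M\widetilde E$.

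The case I can settle by the density method from Corollary~\ref{cor:density} is $x\in F$ with $x\notin\overline{\bigcup_{i}\Omega_i}$. Here some ball $B(x,r)$ is disjoint from every $\Omega_i$, hence $B(x,r)\cap\Omega^c\subset\partial\Omega$. The measure density condition forces $|\partial\Omega|=0$ (otherwise a Lebesgue density point of $\partial\Omega$ would contradict $\overline D(\Omega,\cdot)\geq c>0$ on $\partial\Omega$), and so $D(\Omega,x)=1$. Combined with $D(A,x)=D(B,x)=\tfrac12$, which holds at $\mathcal H^{n-1}$-a.e.\ point of a Lipschitz graph, this yields $D(A\cap\Omega,x)=D(B\cap\Omega,x)=\tfrac12$. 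Since (PE1) gives $\widetilde E\supseteq A\cap\Omega$ and $\mathbb R^n\setminus\widetilde E\supseteq B\cap\Omega$ modulo Lebesgue-null sets, both $\overline D(\widetilde E,x)$ and $\overline D(\mathbb R^n\setminus\widetilde E,x)$ are at least $\tfrac12$, whence $x\in\partial^M\widetilde E$.

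The main obstacle is the remaining case, where $F\cap\overline{\bigcup_{i}\Omega_i}$ carries positive $\mathcal H^{n-1}$-measure. At such $x$, infinitely many components $\Omega_i$ accumulate on $x$ yet none contains $x$ in its closure (by definition of $H$), and $\Omega$ can even have density zero on one side of $\Gamma$ at $x$; the simple density comparison above breaks down. To handle this situation I would analyze $\widetilde E$ globally through its decomposition into indecomposable components from \cite{ACMM2001}: the requirement $x\notin\partial^M\widetilde E$ would force $\chi_{\widetilde E}$ to take essentially the same value on all $\Omega_i$'s that meet a small ball around $x$ on each side of $\Gamma$, and aggregating such local constraints across all of $F\cap\overline{\bigcup_{i}\Omega_i}$ while preserving the perimeter bound (PE2) should, via an isoperimetric accounting of the transitions forced inside individual $\Omega_i$'s, yield the contradiction. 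Making this propagation quantitative -- showing that the cost of consistently interpolating between the required boundary values on the clustering $\Omega_i$'s along a positive-measure subset of $F$ exceeds the (PE2) bound -- is the delicate step of the argument.
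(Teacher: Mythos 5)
Your setup is the same as the paper's (contrapositive, the set $F$, $E=A\cap\Omega$, invoking Theorem \ref{thm:mainRn} to get a strong perimeter extension $\widetilde E$ and aiming to violate (PE3)), and your treatment of the easy sub-case $x\notin\overline{\bigcup_i\Omega_i}$ is fine. But the proof is incomplete: the entire difficulty of the corollary sits in the case you defer, where components $\Omega_i$ accumulate at $x\in H$ without any single $\overline{\Omega}_i$ containing $x$, and your proposed route through the indecomposable-component decomposition of \cite{ACMM2001} and an ``isoperimetric accounting'' of transitions between the $\Omega_i$'s is only a plan, not an argument. As stated it is not clear it can be carried out: controlling how $\chi_{\widetilde E}$ must behave on infinitely many clustering components, and converting that into a quantitative violation of (PE2), is precisely the step you acknowledge you cannot make precise.

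The paper resolves this case by a purely local cone argument that you are missing. By (PE3), $\mathcal H^{n-1}$-almost every $x\in F$ with $\overline D(\widetilde E,x)>0$ must satisfy $D(\widetilde E,x)=1$; call this set $G$. Fix $x\in G$ and consider the open cone $R_{x,2L}$ opening away from the subgraph $A$ (where $L$ is the Lipschitz constant of $f$). If $R_{x,2L}\cap B(x,r)\cap\Omega=\emptyset$ for some $r>0$, then this open connected set is disjoint from $\overline{\Omega}$, hence contained in a single component $\Omega_i$ with $x\in\partial\Omega_i$ --- contradicting $x\in H$. This is the key observation: membership in $H$ forces $\Omega$ to enter the cone at every scale. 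So there are points $x^i\in R_{x,2L}\cap\Omega$ with $x^i\to x$; the balls $B(x^i,\delta|x^i-x|)$ with $\delta=\tfrac{1}{4(L+1)}$ lie in $R_{x,L}$, hence miss $E=\widetilde E\cap\Omega$, while the measure density of Proposition \ref{meas.dens:BV} guarantees they contain a definite fraction of $\Omega$-measure. This produces a fixed proportion of $B(x,2|x^i-x|)$ outside $\widetilde E$, so $\underline D(\widetilde E,x)<1$, contradicting $x\in G$. This argument needs no case split and no global decomposition of $\widetilde E$; to repair your proof you should replace your second case (and in fact both cases) by it.
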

\begin{proof}
 Supposing $\Omega$ to be a $W^{1,1}$-extension domain, by Theorem \ref{thm:mainRn} we know that it has the strong perimeter extension property. 
 
 Now, towards a contradiction, suppose that $f \colon \mathbb R^{n-1} \to \mathbb R$ is an $L$-Lipschitz map so that 
 \[
 \mathcal H^{n-1}(\textrm{Graph}(f) \cap H) > 0,
 \]
 after a suitable rotation.
 Let $A$ be a component of $\mathbb R^n \setminus \textrm{Graph}(f)$
 such that the set 
 \[
 F = \{ x \in  \textrm{Graph}(f) \cap H \,:\,\overline D(\Omega \cap A, x)> 0\}
 \]
 has positive $\mathcal H^{n-1}$-measure.  By the measure density of $\Omega$ (Proposition \ref{meas.dens:BV}), at least one of the components must satisfy this.
 Without loss of generality, we may assume that 
 \[
 A = \{(y,f(x))\,:\, y < f(x)\}.
 \]

Take $E = \Omega \cap A$ and let $\widetilde E$ be the strong perimeter extension of $E$. Now, since $\mathcal H^{n-1}(\partial^M\widetilde E \cap \partial\Omega) = 0$, the set
\[
G = \{x \in F\,:\, D(\widetilde E, x) = 1\}
\]
has positive $\mathcal H^{n-1}$-measure. Take $x = (x_1,\dots,x_n) \in G$. Since $E \subset A$, which was bounded by a graph of an $L$-Lipschitz map, the set 
\[
 R_{x,L} = \{y = (y_1,\dots, y_n) \in \mathbb R^n \,:\, y_n - x_n>L|(x_1,\dots,x_{n-1}) - (y_1,\dots,y_{n-1})|\}
\]
does not intersect $E$. If there exists a small radius $r>0$ for which 
\[
R_{x,2L} \cap B(x,r) \cap \Omega = \emptyset,
\]
we conclude that there exists a connected component $\Omega_i$ of $\mathbb R^n \setminus \overline{\Omega}$ for which $x \in \partial\Omega_i$ contradicting the fact that $x \in H$. Hence, there exists a sequence of points $x^i \in R_{x,2L} \cap \Omega$ such that $|x^i-x| \to 0$. 
Since $f$ is $L$-Lipschitz, writing $\delta= \frac{1}{4(L+1)}$, we have
\[
B\left(x^i,\delta|x^i-x|\right) \subset R_{x,L} \subset \mathbb R^n \setminus E.
\]
By the measure density (Proposition \ref{meas.dens:BV}), we have 
\[
|B(x^i,\delta|x^i-x|)\cap \Omega| > c|B(x^i,\delta|x^i-x|)|
\]
for all $x_i$.
Thus,
\[
\frac{|B(x,2|x^i-x|)\cap \widetilde E|}{|B(x,2|x^i-x|)|} \le 
1 - \frac{|B(x^i,\delta|x^i-x|)\cap \Omega|}{|B(x,2|x^i-x|)|} < 1-c\left(\frac{\delta}{2}\right)^n < 1
\]
giving that
\[
\underline{D}(\widetilde E,x) < 1,
\]
which contradicts the fact that $x \in G$.
\end{proof}
Let us point out that if in addition we require $\Omega$ to be planar and simply connected in the previous corollary, we would get the stronger fact that $\mathcal{H}^1(\partial \Omega\setminus\bigcup_i \overline{\Omega}_i)=0$.

In the next section we will show that in the planar case the conclusion of Corollary \ref{cor:unrectifiabilitynecessity} is also a sufficient condition for a bounded $BV$-extension domain to be a $W^{1,1}$-extension domain. The following example shows that this is not the case in dimension three.

\begin{example}
Let us construct a bounded $BV$-extension domain $\Omega \subset \mathbb R^3$ which is not a $W^{1,1}$-extension domain so that $\mathbb R^3 \setminus \overline{\Omega}$ consists of only one component $\Omega_0$ for which $\partial \Omega = \partial \Omega_0$. Consequently, in the statement of Corollary \ref{cor:unrectifiabilitynecessity} we have $H = \emptyset$.

Let $C \subset [0,1]^2$ be a Cantor set with $\mathcal H^2(C)>0$ and let 
\[
\Omega = (-1,1)^3 \setminus \left\{(x_1,x_2,x_3)\,:\, |x_3| \le \dist((x_1,x_2),C), (x_1,x_2) \in [0,1]^2\right\}.
\]

The fact that $\mathbb R^3 \setminus \overline{\Omega}$ consists of only one component $\Omega_0$ for which $\partial \Omega = \partial \Omega_0$ is immediate from the construction.

Also, with the same arguments as in the previous two corollaries, we see that 
\[
E = \{(x_1,x_2,x_3) \in \Omega\,:\, x_3 < 0\}
\]
does not have a strong perimeter extension.

In order to see that $\Omega$ is a $BV$-extension domain, take $u\in BV(\Omega)$.  First notice that since the parts 
\[
\Omega_1=\{(x_1,x_2,x_3) \in \Omega \,:\, x_3>0\} \;\text{ and } \;\Omega_2=\{(x_1,x_2,x_3) \in \Omega \,:\, x_3<0\}
\]
have Lipschitz boundaries, similarly to  \cite[Theorem 5.8]{EG2015} we can consider the zero extension of both $u|_{\Omega_1}$ and $u|_{\Omega_2}$ to the whole $\R^3$ and calling them $\tilde u_1$ and $\tilde u_2$ respectively, we have $\tilde u_1,\tilde u_2\in BV(\R^3)$ with

\begin{equation}\label{ex:union.BV.func}
\|D \tilde u_i\|(\R^3)=\|D u\|(\Omega_i)+\int_{\partial \Omega_i} |\text{Tr}_i(u)|\,d\mathcal{H}^2
\end{equation}
for every $i=1,2$. Here
\begin{align*}
    \text{Tr}_i\colon BV( \Omega_i)\to L^1\left(\partial\Omega_i; \mathcal{H}^2\right)
\end{align*}
for $i=1,2$ are bounded linear operators, called the traces, which are defined as
\begin{align*}
    &\text{Tr}_i(u)(x) = \lim_{r\searrow  0}\frac{1}{|B(x,r)\cap \Omega_i|}\int_{B(x,r)\cap\Omega_i}u(y)\,dy
 \end{align*}
for $\mathcal{H}^2$-almost every $x$. Now it is easy to check, following \eqref{ex:union.BV.func},
\begin{align*}
\|D \tilde u_i\|(\R^3) & \leq\|D u\|(\Omega_i)+\int_{\partial\Omega_i} |\text{Tr}_i(u)|\,d\mathcal{H}^1\\
&\leq \|u\|_{BV(\Omega_i)}+ C\|u\|_{BV(\Omega_i)}=(1+C)\|u\|_{BV(\Omega)},    
\end{align*}
for $i=1,2$.
To conclude, we just let our extension operator $T\colon BV(\Omega)\to BV(\R^3)$  be $Tu=\tilde u_1+\tilde u_2$, 
which is the zero extension of $u$ outside $\Omega$.
\end{example}

In the case where $\overline{\Omega} = \mathbb R^n$, the study of extension domains $\Omega$ is the same as the study of closed removable sets.  Notice that by the measure density (Proposition \ref{meas.dens:BV}) the Lebesgue measure of $\partial \Omega$ is zero for a Sobolev or $BV$-extension domain.  We call a set $F \subset \mathbb R^n$ of Lebesgue measure zero a removable set for $BV$, if $BV(\mathbb R^n \setminus F) = BV(\mathbb R^n)$ as sets and $\|Du\|(\mathbb R^n) = \|Du\|(\mathbb R^n\setminus F)$ for every $u \in BV(\mathbb R^n)$. Similarly, we call $F$ removable for $W^{1,1}$, if $W^{1,1}(\mathbb R^n \setminus F)= W^{1,1}(\mathbb R^n)$.
We obtain the following equivalence of removability.

\begin{corollary}
Let $F \subset \mathbb R^n$ be a closed set of Lebesgue measure zero. Then $F$ is removable for $BV$ if and only if $F$ is removable for $W^{1,1}$.
\end{corollary}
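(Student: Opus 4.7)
The plan is to prove the two implications separately, invoking the Whitney smoothing operator of Theorem \ref{thm:smoothing} (applied with $A=B=\Omega:=\mathbb R^n\setminus F$) for the nontrivial direction. Since $F$ is closed with $|F|=0$, it has empty interior, so $\overline{\Omega}=\mathbb R^n$ and $\partial\Omega=F$.

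For the direction $BV$-removable $\Rightarrow$ $W^{1,1}$-removable, take any $u\in W^{1,1}(\Omega)\subset BV(\Omega)$. The hypothesis yields $u\in BV(\mathbb R^n)$ together with $\|Du\|(\mathbb R^n)=\|Du\|(\Omega)=\int_\Omega|\nabla u|\,dx$. The Radon measure $\|Du\|$ on $\mathbb R^n$ thus coincides with the absolutely continuous measure $|\nabla u|\,dx$ on $\Omega$ and must vanish on the null set $F$, hence is absolutely continuous on all of $\mathbb R^n$, giving $u\in W^{1,1}(\mathbb R^n)$. The reverse inclusion $W^{1,1}(\mathbb R^n)\subset W^{1,1}(\Omega)$ is automatic.

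For the converse direction, take $u\in BV(\Omega)$ and set $w:=S_{\Omega,\Omega}u$. Theorem \ref{thm:smoothing} gives $w\in W^{1,1}(\Omega)$ with $\|w\|_{BV(\Omega)}\le C\|u\|_{BV(\Omega)}$, and the zero extension of $u-w$ to $\mathbb R^n$ lies in $BV(\mathbb R^n)$ with $\|D(u-w)\|(F)=0$. By the $W^{1,1}$-removability hypothesis applied to $w$, we also have $w\in W^{1,1}(\mathbb R^n)$, so $\|Dw\|(F)=0$. Writing $u=w+(u-w)$ on $\mathbb R^n$, both summands lie in $BV(\mathbb R^n)$ and their total variations put no mass on $F$; consequently $u\in BV(\mathbb R^n)$ with $\|Du\|(\mathbb R^n)=\|Du\|(\Omega)$. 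For a generic $u\in BV(\mathbb R^n)$, applying the same construction to $u|_\Omega$ produces a $v\in BV(\mathbb R^n)$ agreeing with $u$ almost everywhere, whence $\|Du\|(\mathbb R^n)=\|Dv\|(\mathbb R^n)=\|Du\|(\Omega)$, and the set equality $BV(\mathbb R^n)=BV(\Omega)$ follows.

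The only delicate point I expect is verifying that the zero extension of $u-w$ carries no singular distributional mass across $F$; this is precisely property \eqref{eq:boundaryzero} of Theorem \ref{thm:smoothing}. The remainder is routine bookkeeping that exploits $|F|=0$ to identify functions on $\Omega$ with functions on $\mathbb R^n$ modulo almost-everywhere equivalence.
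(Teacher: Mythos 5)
Your proof is correct. The direction ``$W^{1,1}$-removable $\Rightarrow$ $BV$-removable'' is essentially identical to the paper's argument: apply $S_{\Omega,\Omega}$ from Theorem \ref{thm:smoothing}, use \eqref{eq:boundaryzero} to kill $\|D(u-w)\|(F)$, use the removability hypothesis plus $|F|=0$ to kill $\|Dw\|(F)$, and conclude by the triangle inequality. The other direction, however, takes a genuinely different and more elementary route. The paper first shows that a $BV$-removable set $F$ must be purely $(n-1)$-unrectifiable (by the graph construction of Corollary \ref{cor:unrectifiabilitynecessity}), then observes that each $B(0,R)\setminus F$ consequently has the strong perimeter extension property, and finally invokes Theorem \ref{thm:mainRn} to upgrade to $W^{1,1}$-extension and hence $W^{1,1}$-removability. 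You instead argue directly at the level of measures: for $u\in W^{1,1}(\Omega)$, $BV$-removability gives $u\in BV(\mathbb R^n)$ with $\|Du\|(F)=\|Du\|(\mathbb R^n)-\|Du\|(\Omega)=0$, and since $Du\restriction_\Omega=\nabla u\,\mathcal L^n$ is absolutely continuous, $Du$ is absolutely continuous on all of $\mathbb R^n$, so $u\in W^{1,1}(\mathbb R^n)$. This is shorter and avoids the geometric machinery entirely; what the paper's detour buys is the additional structural fact that $BV$-removable sets are purely $(n-1)$-unrectifiable, which your argument does not produce but which the corollary's statement does not require.
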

\begin{proof}
 Suppose $F$ is removable for $BV$. Then $F$ is purely $(n-1)$-unrectifiable. Otherwise, similarly as in the proof of Corollary \ref{cor:unrectifiabilitynecessity}, we can construct a set $E$ of finite perimeter so that $\mathcal H^{n-1}(\partial^M E\cap F)>0$. Hence, $P(E,\mathbb R^n\setminus F) \ne P(E,\mathbb R^n)$, contradicting the assumption that $F$ is removable for $BV$.
 Now, since $F$ is removable for $BV$, for every radius $R>0$, the set $B(0,R) \setminus F$ is a $BV$-extension domain. Since $F$ is purely $(n-1)$-unrectifiable, $B(0,R) \setminus F$ trivially has the strong perimeter extension property and is thus a $W^{1,1}$-extension domain by Theorem \ref{thm:mainRn}. Consequently, $F$ is removable for $W^{1,1}$. 
 
 Suppose then that $F$ is removable for $W^{1,1}$. Let $u \in BV(\mathbb R^n\setminus F)$. We only need to check that the function $u$ when seeing as a function defined on the whole $\R^n$, satisfies $\|Du\|(F)=0$. With the Whitney smoothing operator $S_{\mathbb R^n \setminus F,\mathbb R^n \setminus F}$  from Theorem \ref{thm:smoothing} we can modify $u$ to be a $W^{1,1}$-function $\tilde u = S_{\mathbb R^n \setminus F,\mathbb R^n \setminus F}u$ on $\R^n\setminus F$ and moreover, by  \eqref{eq:boundaryzero}, 
 $$\| D (\tilde u -u)\|(F)=0,$$
 where $\tilde u$ can be defined as any value on $F$. Since $F$ is removable for $W^{1,1}$, we have $\tilde u \in  W^{1,1}(\mathbb R^n)$. Thus $\|D \tilde u\|(F)=0$ because $|F|=0$ and therefore
 $$\|D u\|(F)\leq \|D( u-\tilde u)\|(F)+\|D \tilde u\|(F)=0, $$
 and we get that $u \in BV(\mathbb R^n)$ with $\|Du\|(\mathbb R^n) = \|Du\|(\mathbb R^n \setminus F)$.
\end{proof}

\section{Characterization of planar $W^{1,1}$-extension domains}\label{sec:planar}
 
 In this section we prove Theorem \ref{thm:planar} using the higher dimensional result stated in Theorem \ref{thm:mainRn}. Since the necessity part of Theorem \ref{thm:planar} holds in the higher-dimensional case by Corollary \ref{cor:unrectifiabilitynecessity}, we only need to prove the sufficiency. We first set some notations and definitions.

We say that $\Gamma\subset\R^2$ is a Jordan curve if $\Gamma=\gamma([a,b])$ for some $a,b\in \R$, $a<b$, and some continuous map $\gamma$, injective on $[a,b)$ and such that $\gamma(a)=\gamma(b)$. Accordingly to the famous Jordan curve theorem any Jordan curve $\Gamma$ splits $\R^2\setminus \Gamma$ in exactly two connected components, a bounded one and an unbounded one that we call $\text{int} (\Gamma)$ and $\text{ext} (\Gamma)$ respectively. We will often talk about rectifiable Jordan curves $J$, for which we mean that $J$ is a Jordan curve and it is $1$-rectifiable. A set $A$ whose boundary $\partial A$ is a Jordan curve is called a Jordan domain.

For technical reasons we also add to the class of Jordan curves the formal ''Jordan'' curves $J_0$  and $J_{\infty}$, whose interiors are $\R^2$ and the empty set respectively and for which we set $\mathcal{H}^{1}(J_{0})=\mathcal{H}^{1}(J_{\infty})=0$.

We say that a set $E\in \R^2$ has a decomposition into other sets $\{E_i\}_{i}$ up to $\mathcal{H}^1$-measure zero sets if 
$$\mathcal{H}^1\left(\left(E\setminus \bigcup_{i} E_i\right)\cup \left( \bigcup_{i} E_i\setminus E\right)\right)=0 $$
and $\mathcal H^1(E_i\cap E_j) = 0$ for every $i \ne j$.

For the particular case of planar sets of finite perimeter we have the following decomposition theorem from \cite[Corollary 1]{ACMM2001}.

\begin{theorem} \label{thm:planardecomposition}
Let $E \subset \mathbb R^2$ have finite perimeter. Then, there exists a unique decomposition of $\partial^ME$ into rectifiable Jordan curves $\{C_i^+, C_k^-\,:\,i,k \in \mathbb N\}$, up to $\mathcal H^1$-measure zero sets, such that
\begin{enumerate}
    \item Given $\text{int}(C_i^+)$, $\text{int}(C_k^+)$, $i \ne k$, they are either disjoint or one is contained
in the other; given $\text{int}(C_i^-)$, $\text{int}(C_k^-)$, $i \ne k$, they are either disjoint or one is
contained in the other. Each $\text{int}(C_i^-)$ is contained in one of the $\text{int}(C_k^+)$.
    \item $P(E,\R^2) = \sum_{i}\mathcal H^1(C_i^+) + \sum_k \mathcal H^1(C_k^-)$.
    \item If $\text{int}(C_i^+) \subset \text{int}(C_j^+)$, $i \ne j$, then there is some rectifiable Jordan curve  $C_k^-$ such that $\text{int}(C_i^+)\subset \text{int}(C_k^-) \subset \text{int}(C_j^+)$. Similarly, if $\text{int}(C_i^-) \subset \text{int}(C_j^-)$, $i \ne j$, then there is some rectifiable Jordan curve  $C_k^+$ such that $\text{int}(C_i^-)\subset \text{int}(C_k^+) \subset \text{int}(C_j^-)$.
    \item Setting $L_j =\{i \,:\, \text{int}(C_i^-)\subset \text{int}(C_j^+)\}$ the sets $Y_j = \text{int}(C_j^+) \setminus \bigcup_{i \in L_j}\text{int}(C_i^-)$ are pairwise disjoint, indecomposable and $E = \bigcup_j Y_j$.
\end{enumerate}

\end{theorem}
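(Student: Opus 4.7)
My plan is to reduce to the indecomposable decomposition already recalled in the excerpt just before the statement: every set of finite perimeter in $\R^n$ splits uniquely, modulo $\mathcal H^1$-null sets, as a countable disjoint union of indecomposable pieces whose perimeters add up. Thus it suffices to understand the essential boundary of a single indecomposable planar set of finite perimeter and then reassemble into the hierarchy required by (1)--(4).

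The core step is to show that for an indecomposable $Y \subset \R^2$ of finite perimeter, $\partial^M Y$ is, up to $\mathcal H^1$-null sets, a disjoint countable union of rectifiable Jordan curves consisting of exactly one outer curve $C^+$ (enclosing $Y$ in the sense that $Y \subset \mathrm{int}(C^+)$ modulo measure zero) together with countably many inner curves $C_k^-$ bounding the ``holes'' of $Y$. The ingredients I would use are: De Giorgi's structure theorem, which yields $1$-rectifiability of $\partial^M Y$ together with a measure-theoretic normal at $\mathcal H^1$-a.e.\ point; blow-up arguments showing that at such points $\partial^M Y$ is locally a Lipschitz graph, so branching cannot occur on a positive $\mathcal H^1$-set; and indecomposability itself, which forbids splitting off a proper positive-measure piece whose perimeter adds. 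Combining these with the Jordan curve theorem in the plane should allow one to patch the local graph pieces into closed rectifiable loops, and to identify each loop either as the boundary of the unbounded essential complement component of $Y$ (giving $C^+$) or as the boundary of a bounded essential complement component (giving one of the $C_k^-$). Applying this to each indecomposable piece $Y_j$ of $E$ produces the candidate family $\{C_j^+, C_k^-\}$, and formula (2) as well as the representation (4) then fall out by construction.

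The nesting (1) is a direct consequence of the Jordan curve theorem: interiors of disjoint Jordan curves are either disjoint or nested, and since the outer curves $C_j^+$ come from disjoint indecomposable pieces they inherit this. For (3), if $\mathrm{int}(C_i^+) \subsetneq \mathrm{int}(C_j^+)$ then $Y_i$ must lie inside a hole of $Y_j$ (otherwise $Y_i$ and $Y_j$ would overlap, contradicting disjointness of the indecomposable decomposition), and the boundary of that hole is one of the $C_k^-$; the symmetric statement for nested $C^-$-curves is obtained by considering the indecomposable component whose outer curve lies in the intermediate region. Uniqueness is forced by (2) and (4) together with uniqueness of the indecomposable decomposition.

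The main obstacle I anticipate is the core step above: upgrading the purely measure-theoretic, $\mathcal H^1$-rectifiable structure of $\partial^M Y$ into an honest topological family of Jordan curves. De Giorgi only provides local Lipschitz graphs up to null sets, while Jordan curves are global topological loops, so some care is needed to patch the local pieces without creating spurious branching, self-intersections, or missing arcs. Indecomposability is what bridges this gap, but executing the patching rigorously is the delicate part, and is precisely the place where I would lean on the planar machinery developed in \cite{ACMM2001}.
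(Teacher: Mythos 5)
The paper does not prove this statement: it is quoted verbatim from Ambrosio--Caselles--Masnou--Morel \cite{ACMM2001} (their Corollary~1) and used as a black box, so there is no internal proof of the paper's to compare against.

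Your sketch does track the overall architecture of the argument in \cite{ACMM2001}: reduce to the unique decomposition into indecomposable components, analyze the essential boundary of a single indecomposable piece, show it splits into one outer rectifiable Jordan curve and countably many inner ones bounding the ``holes'', and then reassemble the nesting hierarchy (1)--(4). However, you are candid that the decisive step --- upgrading the purely measure-theoretic $\mathcal H^1$-rectifiability of $\partial^M Y$ provided by De Giorgi's structure theorem to a genuine countable family of topological Jordan curves --- is not carried out, and that is indeed where essentially all of the work lies. De Giorgi and blow-up give local Lipschitz graph structure at $\mathcal H^1$-a.e.\ boundary point, but they say nothing about global connectivity, closedness of the loops, or absence of branching on a positive-measure set; none of these follow automatically from indecomposability alone. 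In \cite{ACMM2001} this gap is bridged by introducing \emph{holes} and \emph{saturation}: one passes to the saturated set $\mathrm{sat}(Y)$, proves that an indecomposable saturated set (a \emph{simple set}) of finite measure is a rectifiable Jordan domain via a delicate planar-topology argument that exploits indecomposability of both the set and its complement, and then applies this iteratively to the holes of $Y$ to produce the inner curves $C^-_k$. Without the simple-set theorem, or an equivalent substitute establishing the Jordan-curve structure of the boundary, your argument does not close; as written it is an accurate roadmap of what must be proved rather than a proof, and in particular it relies on exactly the external reference the paper itself cites for the result.
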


Since sets of finite perimeter are defined via the total variation of $BV$-functions, they are understood modulo $2$-dimensional measure zero sets. In particular, the last equality in (4) of Theorem \ref{thm:planardecomposition} is modulo measure zero sets.

In order to prove the sufficiency part of Theorem \ref{thm:planar} we will proceed as follows: Starting from a set $E\subset \Omega$ of finite perimeter we first find an extension $E'$ to $\R^2$ using the fact that $\Omega$ is a $BV$-extension domain. Then we  decompose $\partial^M E'$ using Theorem \ref{thm:planardecomposition} and after proving the quasiconvexity of each of the open connected components $\Omega_i$ of $\R^2\setminus \overline{\Omega}$, we will be able to perturb the Jordan curves of the decomposition of $\partial^M E'$ around each $\partial \Omega_i$ so that we get a final set $\widetilde E$ which will be a strong extension of $E$. An application of Theorem \ref{thm:mainRn} will conclude the proof.

We start by presenting a couple of lemmas showing the quasiconvexity of all the connected components of $\R^2\setminus\overline{\Omega}$.

\begin{lemma}\label{lem:quasi.comp.domains} 
Suppose that $\Omega \subset \mathbb R^2$ is a bounded $BV$-extension domain. Then there exists a constant $C>0$ so that for any connected component $\Omega_i$ of $\mathbb R^2 \setminus \overline{\Omega}$, any two points $z,w \in \partial\Omega_i$ can be connected by a curve $\beta \subset \overline{\Omega_i}$ with $\ell(\beta) \le C|z-w|$.
\end{lemma}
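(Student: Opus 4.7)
\medskip

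My plan would be to combine two ingredients: the global quasiconvexity of $\mathbb{R}^2\setminus \Omega$, which is a known consequence of the $BV$-extension property, and a planar-topology localization that pushes a quasiconvex curve into the single component $\overline{\Omega_i}$.

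The first step is to verify that $\mathbb{R}^2\setminus \Omega$ is quasiconvex. Given $p,q\in \mathbb{R}^2\setminus \Omega$ with $|p-q|=d$, I would apply the $BV$-extension operator $T$ to the characteristic function of $\Omega \cap B(p, d/2)$, whose $BV(\Omega)$-norm is controlled by a constant multiple of $d$. By the coarea formula (Theorem \ref{thm:coarea}) the extension $T\chi_{\Omega\cap B(p,d/2)}$ has a positive-measure set of levels $t$ for which the superlevel set has perimeter at most $C d$ in $\mathbb R^2$. Theorem \ref{thm:planardecomposition} decomposes the essential boundary of such a superlevel set into rectifiable Jordan curves; by combining the measure density (Proposition \ref{meas.dens:BV}) applied near $p$ and $q$ with a separation argument, one extracts a rectifiable Jordan curve forcing $p$ and $q$ to lie on the boundary of the same superlevel set component, hence connected inside $\mathbb{R}^2\setminus \Omega$ by a curve of length $\lesssim d$.

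With this global quasiconvexity constant $C_0$ in hand, I would fix $z, w \in \partial\Omega_i$ and take $\gamma \subset \mathbb{R}^2\setminus \Omega$ of length at most $C_0|z-w|$ connecting them. If $\gamma\subset \overline{\Omega_i}$ the claim is immediate. Otherwise I decompose the parameter interval of $\gamma$ into the (at most countably many) maximal open intervals $(a_k,b_k)$ on which $\gamma$ is disjoint from $\overline{\Omega_i}$, noting that $\gamma(a_k),\gamma(b_k)\in\partial\Omega_i$. Each excursion $\gamma|_{[a_k,b_k]}$ lies in $(\mathbb{R}^2\setminus \Omega)\setminus\overline{\Omega_i}$, i.e.\ inside other complementary components (together with the part of $\partial\Omega$ that does not belong to $\partial\Omega_i$). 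I would then replace each excursion by a substitute arc $\beta_k \subset \overline{\Omega_i}$ from $\gamma(a_k)$ to $\gamma(b_k)$ whose length is controlled by $|\gamma(a_k)-\gamma(b_k)|$, and then concatenate the untouched parts of $\gamma$ with the $\beta_k$ to produce the desired curve $\beta$.

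The main obstacle is the construction of $\beta_k$: one must force a short path \emph{within} $\overline{\Omega_i}$ even though the quasiconvex curve obtained from the global argument is free to wander into neighboring components. My approach to this would be to re-run the $BV$-extension/coarea construction of the first step, but applied to the characteristic function of a thin strip inside $\Omega$ adjacent to $\Omega_i$ along the chord $[\gamma(a_k),\gamma(b_k)]$; the planar structure of the complementary components, combined with the measure density of $\Omega$, should guarantee that the Jordan curves extracted from the essential boundary of the extended superlevel set by Theorem \ref{thm:planardecomposition} remain in $\overline{\Omega_i}$ and provide $\beta_k$ with $\ell(\beta_k)\le C|\gamma(a_k)-\gamma(b_k)|\le C\,\ell(\gamma|_{[a_k,b_k]})$. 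Summing over $k$ then yields $\ell(\beta)\le CC_0|z-w|$.
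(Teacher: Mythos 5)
Your first step (quasiconvexity of the full complement $\mathbb{R}^2\setminus\Omega$) is essentially the Koskela--Miranda--Shanmugalingam result and is fine in outline. The genuine gap is in the localization step. When you decompose $\gamma$ into maximal excursions away from $\overline{\Omega_i}$, each excursion has endpoints $\gamma(a_k),\gamma(b_k)\in\partial\Omega_i$, and what you need is a curve inside $\overline{\Omega_i}$ joining them with length $\lesssim|\gamma(a_k)-\gamma(b_k)|$ --- but that is exactly the statement of the lemma for the pair $(\gamma(a_k),\gamma(b_k))$, so the argument is circular unless the substitute arcs $\beta_k$ are produced by an independent mechanism. Your proposed mechanism (extend the characteristic function of a ``thin strip inside $\Omega$ adjacent to $\Omega_i$ along the chord'') does not supply one: the $BV$-extension operator gives no control on \emph{where} in $\mathbb{R}^2\setminus\Omega$ the essential boundary of the extended superlevel set lives, so the Jordan curves extracted via Theorem \ref{thm:planardecomposition} are free to wander into other complementary components, and nothing in the construction confines them to $\overline{\Omega_i}$. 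Moreover the chord $[\gamma(a_k),\gamma(b_k)]$ need not be adjacent to $\Omega$ at all, so the ``thin strip'' is not well defined.

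The paper avoids this entirely by changing the domain rather than localizing the curve: for each fixed $i$ it considers $\Omega'=\mathbb{R}^2\setminus\overline{\Omega}_i$, shows that $\Omega'$ is itself a $BV$-extension domain (using the operator $u\mapsto T(u|_\Omega)|_{\overline{\Omega}_i}+u$, which is well defined because distinct components' closures meet in at most one point), and then runs the argument of \cite[Theorem 1.1]{KMS2010} on $\Omega'$, whose complement is precisely $\overline{\Omega}_i$. The quasiconvexity one obtains is then automatically quasiconvexity of the single component's closure, with a constant depending only on $\|T\|$ and hence uniform in $i$. If you want to salvage your approach, this reduction (or some equivalent device that makes $\overline{\Omega}_i$ the \emph{whole} complement of a $BV$-extension domain) is the missing idea; the excursion-replacement scheme by itself cannot close the argument.
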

 
\begin{proof}
One can essentially follow step by step the proof of \cite[Theorem 1.1]{KMS2010}, once we have taken into account some facts.

\begin{enumerate}
    \item For a given $i$, since $\Omega$ is a $BV$-extension domain, so is $\Omega' = \mathbb R^2\setminus \overline{\Omega}_i$. As an extension operator we can take
\[
T' \colon BV(\Omega') \to BV(\mathbb R^2) \colon 
u \mapsto T(u|_{\Omega})|_{\overline{\Omega}_i} + u,
\]
where $T$ is the extension operator from $BV(\Omega)$ to $BV(\mathbb R^2)$. Let us explain more in detail why  our resulting function $T'u$ is well-defined as a function in $ BV(\R^2)$. Observe that the closures of the different components $\overline{\Omega}_i$ can only intersect between themselves in just one point. That is,
\begin{equation}\label{eq:few.inter.Omega_i}
  \#{\{\partial \Omega_i\cap \partial\Omega_j\}}\leq 1\;\;\;\text{for every}\;\; i\neq j.
\end{equation}
Otherwise we would be losing the connectedness of $\Omega$ or either $\Omega_i$ and $\Omega_j$ are the same component. This means that $\partial\Omega_i\cap \bigcup_{j\neq i} \overline{\Omega}_j $ is a countable set.  Once we are aware of this simple fact it is clear that $T'(u)$ behaves well around $\partial\Omega_i$ and it belongs to $BV(\R^2)$.

Observe that since $\Omega'$ is a $BV$-extension domain there is a constant $C'>0$ for which the property $(PE2)$  of extension of sets of finite perimeter holds. Note that this constant $C'$ only depends on the norm $\|T'\|$, which only depends on $\|T\|$ and which in turn only depends on the constant $C>0$ of the same property $(PE2)$ but now applied to the $BV$-extension domain $\Omega$.

\item We can assume that $\Omega'$ is bounded, and hence also a $BV_l$-extension domain thanks to \cite[Lemma 2.1]{KMS2010}. If $\Omega'$ was not bounded then $\Omega_i$ had to be bounded and we can take a large enough radius $R>0$ so that
$$\Omega_i \subset B(0,R) \;\; \text{and}\;\;\Omega\subset B(0,R)\setminus \overline{\Omega}_i.$$
It is clear that changing $\Omega'$ by $\Omega'\cap B(0,R)$ does not affect the $BV$-extension property.

\end{enumerate}

The proof of \cite[Theorem 1.1]{KMS2010} is made under the assumptions that a set $\Omega'$  is a bounded simply connected $BV_l$-extension domain, reaching as a conclusion that $\R^2\setminus \Omega'$ is quasiconvex.

In the case $\Omega_i$ was unbounded, $\Omega'$ will be a bounded simply connected $BV_l$-extension domain and we apply the previous result directly to show the quasiconvexity of $\overline{\Omega}_i$.

If $\Omega_i$ was bounded, after the modification mentioned above, $\Omega'$ will be a bounded $BV_l$-extension domain. To prove the quasiconvexity of $\overline{\Omega}_i$ in \cite[Theorem 1.1]{KMS2010} the simply connectedness was just used at the following point: when we take two points $z,w\in\partial\Omega_i$ and join them with a line-segment $L_{z,w}$, the set $ \Omega'  \cap L_{z,w}$ consists on the disjoint union of countably many line-segments $L_{z_i,w_i}$, with $z_i,w_i\in\partial\Omega_i$. Now, under the assumption of simply connectedness of $\Omega'$ one can assert that $\Omega' \setminus L_{z_i,w_i}$ has two disjoint connected components. However, in our case this is still true because otherwise  $\Omega_i$ would not be connected.

The previous facts yield that every set $\overline\Omega_i$ is quasiconvex. A careful reading of the proof \cite[Theorem 1.1]{KMS2010} also shows that the constant of quasiconvexity of all these sets is uniformly bounded by a constant $C>0$, independent of $i$. Indeed, the quasiconvexity constant of any set $\overline \Omega_i$  only depends on the constant of the extension property of sets of finite perimeter $(PE2)$ for the $BV$-extension domains $\Omega'=\R^2\setminus \overline \Omega_i$, which, as we already noted, depends only on the constant for the $BV$-extension domain $\Omega$ independently of what $i$ we are fixing.
\end{proof}

Notice that the previous Lemma \ref{lem:quasi.comp.domains} implies, in particular, that if $\Omega$ is a bounded $BV$-extension domain, then all open connected components of $\R^2\setminus \overline{\Omega}$ are Jordan domains.

We record the following general lemma which might be of independent interest. A version of it for quasiconvex sets was proven via conformal maps in \cite{KRZ}. Let us also point out that with the sharp Painleve-length result for a connected set \cite{LPR2020} one could quite easily prove a version of the lemma with a multiplicative constant $2$.
\begin{lemma}\label{lem:quasi.int.Omega_i}
 Let $\Omega$ be a Jordan domain. For every $x,y \in \overline{\Omega}$, every $\varepsilon > 0$ and any rectifiable curve $\gamma \subset \overline{\Omega}$ joining $x$ to $y$ there exists a curve $\sigma \subset \Omega \cup \{x,y\}$ joining $x$ to $y$ so that
my g \[
   \ell(\sigma) \le \ell(\gamma) + \varepsilon.
 \]
\end{lemma}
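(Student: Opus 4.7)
The plan is as follows. Parameterize $\gamma\colon [0,L]\to\overline\Omega$ by arc length and set $A=\gamma^{-1}(\partial\Omega)\subset[0,L]$, which is compact. If $A\subseteq\{0,L\}$ then $\gamma\subset\Omega\cup\{x,y\}$ already and one takes $\sigma=\gamma$, so assume $A$ has points in $(0,L)$. By compactness of $A$ and outer regularity of Lebesgue measure, I cover $A$ by a finite disjoint union of open intervals $U=\bigsqcup_{i=1}^N(s_i,t_i)\subset[0,L]$ with $|U|\le |A|+\varepsilon/2$. The endpoints of each component of $U$ automatically lie in $[0,L]\setminus A$, with the trivial exceptions $s_i=0$ or $t_i=L$ in case $x$ or $y$ lies on $\partial\Omega$; so $\gamma(s_i),\gamma(t_i)\in\Omega\cup\{x,y\}$. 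On the complement $[0,L]\setminus U$ the curve $\gamma$ already lies in $\Omega\cup\{x,y\}$, and I keep it unchanged.

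The central step is to replace $\gamma|_{[s_i,t_i]}$ on each interval $(s_i,t_i)$ by a detour $\sigma_i\subset \Omega\cup\{\gamma(s_i),\gamma(t_i)\}$ joining its endpoints with $\ell(\sigma_i)\le (t_i-s_i)+\varepsilon/(2N)$. By further subdividing if necessary, $\gamma([s_i,t_i])$ can be confined to a small ball $B(z_i,r_i)$ with $z_i\in\overline\Omega$ and $r_i$ comparable to $t_i-s_i$. The detour is then constructed using the Jordan structure of $\Omega$: by Carath\'eodory's theorem, $\Omega$ is homeomorphic to the open unit disk via a map extending to a homeomorphism of the closures, and $\Omega$ is therefore locally connected at every boundary point. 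This provides an arc in $\Omega$ joining $\gamma(s_i)$ and $\gamma(t_i)$ staying close to $z_i$; a sufficiently fine subdivision of $(s_i,t_i)$ then yields the required length bound.

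Concatenating the unchanged parts of $\gamma$ with the detours $\sigma_i$ gives $\sigma\subset\Omega\cup\{x,y\}$ joining $x$ to $y$, and
\[
\ell(\sigma)\le (L-|U|)+\sum_{i=1}^N\Bigl((t_i-s_i)+\tfrac{\varepsilon}{2N}\Bigr)=L+\tfrac{\varepsilon}{2}\le L+\varepsilon.
\]
The main obstacle I expect is the quantitative length control in the detour construction: local connectedness by itself yields only existence of the connecting arc without a bound on its length, so an additional argument combining the Jordan structure (via Carath\'eodory) with the arc-length parameterization of $\gamma$ is needed to ensure the length of the arc within $B(z_i,r_i)$ stays within $\varepsilon/(2N)$ of $t_i-s_i$.
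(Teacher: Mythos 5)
There is a genuine gap, and you have in fact located it yourself: the ``central step'' of replacing $\gamma|_{[s_i,t_i]}$ by a detour $\sigma_i\subset\Omega\cup\{\gamma(s_i),\gamma(t_i)\}$ with $\ell(\sigma_i)\le (t_i-s_i)+\varepsilon/(2N)$ is never proved, and it is not a technicality --- it \emph{is} the lemma, localized. Local connectedness of $\overline\Omega$ at boundary points (Carath\'eodory) gives an arc in $\Omega$ joining $\gamma(s_i)$ to $\gamma(t_i)$ inside a small neighbourhood, but it gives no bound whatsoever on the \emph{length} of that arc: for a Jordan domain with a wild boundary (e.g.\ snowflake-type or positive-area boundary) two points of $\Omega$ at Euclidean distance $r$ that are joined by an arc of length $\approx r$ in $\overline\Omega$ may only be joinable inside $\Omega$ by arcs that are enormously longer, and this failure of comparability persists at every scale. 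Hence ``a sufficiently fine subdivision'' does not help: after subdividing, each piece presents exactly the same problem again, so the reduction is circular. The outer decomposition of $[0,L]$ into intervals covering $\gamma^{-1}(\partial\Omega)$ is fine, but it only repackages the statement; no new leverage is gained.

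The idea your proposal is missing, and which the paper's proof is built on, is that one should not look for a new arc in $\Omega$ from scratch but should \emph{perturb $\gamma$ itself}: the paper constructs a locally Lipschitz direction field $v(t)\in\mathbb S^1$ and a threshold $\varepsilon(t)>0$ with $\gamma(t)+hv(t)\in\Omega$ for $0<h<\varepsilon(t)$ (using minimality of $\gamma$, the two boundary arcs $T_1,T_2$ cut out by the first and last boundary contact, and local convexity of the set $K_t$ to get normal directions varying in a Lipschitz way), and then sets $\sigma(t)=\gamma(t)+\delta(t)v(t)$ for a carefully built function $\delta$ vanishing at the endpoints. Because $\sigma$ is a $C^0$-small perturbation of $\gamma$ with $\int_0^{\ell(\gamma)}|(\delta v)'|\,dt<\varepsilon$, the length estimate $\ell(\sigma)\le\ell(\gamma)+\varepsilon$ comes out of the derivative bound directly, which is exactly the quantitative control your approach cannot extract from local connectedness. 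To repair your argument you would need to supply this (or an equivalent conformal-map estimate with explicit length control), at which point the interval decomposition becomes unnecessary.
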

\begin{proof}
Without loss of generality, we may assume that $\gamma \colon [0,\ell(\gamma)] \to \mathbb R^2$ minimizes the length of curves joining $x$ to $y$ in $\overline{\Omega}$, $\gamma(0) = x$, $\gamma(\ell(\gamma)) = y$, and that $\gamma$ has unit speed.

If $\gamma((0,\ell(\gamma))) \cap \partial\Omega = \emptyset$,
we are done. Suppose this is not the case and define
\[
s_1 = \min \{t \in [0,\ell(\gamma)] \,:\, \gamma(t) \in \partial\Omega\}
\]
and
\[
s_2 = \max \{t \in [0,\ell(\gamma)] \,:\, \gamma(t) \in \partial\Omega\}
\]
If $s_1=s_2$, by minimality the curve $\gamma$ is the concatenation of line-segments $[x,\gamma(s_1)]$ and $[\gamma(s_1), y]$. In this case, for small $r \in (0,\varepsilon /(2\pi))$, the curve $\gamma$ divides $B(\gamma(s_1),r)$ into two parts so that one of them is a subset of $\Omega$. Thus, we may replace part of $\gamma$ by an arc of the circle $S(\gamma(s_1),r)$, and we are done. 

We are then left with the more substantial case where $s_1 < s_2$.
Since $\partial\Omega$ is a Jordan loop, the set $\partial \Omega \setminus \gamma(\{s_1,s_2\})$ consists of two connected components $T_1$ and $T_2$.

We will show that $\gamma$ can be slightly pushed away from $\partial\Omega$ in directions that change in a locally Lipschitz way in $(0,\ell(\gamma))$. Namely, we assert that there exist functions
\begin{align*}
    &\varepsilon\colon (0,\ell(\gamma))\to (0,1),\\
    &v\colon(0,\ell(\gamma))\to \mathbb{S}^1,
\end{align*}
so that $\varepsilon(\cdot)$ and $v(\cdot)$ are locally Lipschitz continuous and satisfy $\gamma(t)+hv(t)\in\Omega$ for all $0<h<\varepsilon(t)$ and $t \in (0,\ell(\gamma))$.

In order to show this, let $t \in (0,\ell(\gamma))$. If $\gamma(t) \in \Omega$, then with $\varepsilon_t = \frac12\dist(\gamma(t),\partial\Omega)$ we have $\gamma(s) + hv \in \Omega$ for all $s\in (t-\varepsilon_t, t+\varepsilon_t)\cap(0,\ell(\gamma))$,  $v \in \mathbb S^1$ and $0 < h <\varepsilon_t$. 

Suppose then that $t \in \gamma^{-1}(\partial\Omega) \cap (0,\ell(\gamma))$.
Without loss of generality we may assume that $\gamma(t) \in T_1$. 
The concatenation of $\gamma|_{[s_1,s_2]}$ with $T_2$ forms a closed  loop $\alpha$ so that one of the components $\tilde\Omega$ of its complement is contained in $\Omega$, and $\gamma(t) \in \partial\tilde\Omega$.
Now, let 
$r_t = \dist(\gamma(t),T_2)$. 
Then, by minimality of $\gamma$, the set $\gamma \cap B(\gamma(t),r_t)$
is contained on the boundary of a convex set $K_t = B(\gamma(t),r_t) \setminus \tilde\Omega$ with non-empty interior.
Consequently, there exists a constant $\varepsilon_t>0$ 
so that for any $t-\varepsilon_t < \tau_1 < \tau_2 < t +\varepsilon_t$ for which the outer normal vectors $w_1$ and $w_2$ to $K_t$ exist at $\gamma(\tau_1)$ and $\gamma(\tau_2)$ respectively, there is a Lipschitz map $[\tau_1,\tau_2] \to \mathbb S^1\colon t \mapsto v_{\tau_1,\tau_2}(t)$ so that
$v_{\tau_1,\tau_2}(\tau_1) = w_1$, $v_{\tau_1,\tau_2}(\tau_2) = w_2$, and 
$\gamma(t)+hv_{\tau_1,\tau_2}(t)\in\Omega$ for all $0<h<\varepsilon_t$ and $\tau_1 \le t \le \tau_2$.

Write $I \subset (0,\ell(\gamma))$ to be the points $t$ where a normal direction to $\gamma$ exists at $\gamma(t)$.
Now, cover $(0,\ell(\gamma))$ with the intervals $(t-\varepsilon_t,t+\varepsilon_t)\cap(0,\ell(\gamma))$ and then take a subcover $\{U_i=(\overline{t}_i-\varepsilon_{\overline{t}_i},\overline{t}_i+\varepsilon_{\overline{t}_i})\}_{i\in\mathbb Z}$ that is finite for compact subsets of $(0,\ell(\gamma))$, and so that every $t\in(0,\ell(\gamma))$ belongs to at most two intervals $U_i$. Assume the intervals $U_i$ are in order, that is $U_i$ only intersects $U_{i-1}$ and $U_{i+1}$. 
By dividing into smaller intervals if needed, we may also assume that if $\gamma(U_i)\cap \partial \Omega \ne \emptyset$ and
$\gamma(U_{i+1})\cap \partial \Omega \ne \emptyset$, then 
$\gamma(U_i\cup U_{i+1})\cap \partial \Omega \subset T_j$ for $j =1$ or $j=2$.
This allows us to select the normal directions $w \in \mathbb S^1$ in a way so that they agree for the intervals $U_i$ and $U_{i+1}$ at the points $t \in I \cap U_i\cap U_{i+1}$.
Notice that for $i$ for which $\gamma(U_i)\cap \partial\Omega = \emptyset$ we have to make a choice between two opposite directions.

We will then have  subset $I \subset (0,\ell(\gamma))$ with $\mathcal H^1((0,\ell(\gamma)) \setminus I) = 0$, and an open covering $\{U_i\}_i$ of $(0,\ell(\gamma))$ of multiplicity at most two, where $U_i$ are intervals, so that
\begin{itemize}
    \item for every $i \in \mathbb Z$ there exists a constant $\varepsilon_i>0$ so that for every $\tau_1,\tau_2 \in I \cap  U_i$, $\tau_1 < \tau_2$, there is a Lipschitz map $[\tau_1,\tau_2] \to \mathbb S^1\colon t \mapsto v_{\tau_1,\tau_2}(t)$ so that 
$\gamma(t)+hv_{\tau_1,\tau_2}(t)\in\Omega$ for all $0<h<\varepsilon_i$ and $\tau_1 \le t \le \tau_2$,
\item if $v_{\tau_1,\tau_2}$ and $v_{\tau_2,\tau_3}$ have been defined as above, $v_{\tau_1,\tau_2}(\tau_2) = v_{\tau_2,\tau_3}(\tau_2)$. 
\end{itemize}

    For each $i\in \mathbb Z$ we will now fix a $t_i \in I \cap  U_i \cap U_{i+1}$, and define  $v(t) = v_{t_i,t_{i+1}}(t)$ on $[t_i,t_{i+1}]$. 
A locally Lipschitz choice for $\varepsilon$ can be given by defining
\[
\varepsilon(t) = \frac{t_{i+1}-t}{t_{i+1}-t_i}\min(\varepsilon_i,\varepsilon_{i+1})  + \frac{t-t_{i}}{t_{i+1}-t_i}\min(\varepsilon_{i+1},\varepsilon_{i+2})
\]
when $t \in [t_i,t_{i+1}]$.

Let $i_0 \in \N$ be such that $i_0 > \frac{2}{s_2-s_1} \geq \frac{2}{\ell(\gamma)}$. Then, for any $i \ge i_0$, the function
$v(\cdot)$ is Lipschitz in $[1/i, \ell(\gamma)-1/i]$ and $\eta_i := \min_{t \in [1/i, \ell(\gamma)-1/i]}\varepsilon(t) > 0$. Hence, if we define
\[
\delta_i(t) = \begin{cases}
\eta_i\frac{\min\{|\ell(\gamma) - 1/i-t|,|1/i-t|\}}{\ell(\gamma) - 2/i}, & \text{if } t\in [1/i, \ell(\gamma)-1/i],\\
0, & \text{otherwise}
\end{cases},
\]
we have $|\delta'_i(t)|=\frac{\eta_i}{\ell(\gamma)-2/i}$ for all $t\in (1/i, \ell(\gamma)-1/i)\setminus \{\ell(\gamma)/2\}$, and also  $|\delta_i(t)|\leq \eta_i/2\leq \varepsilon(t)/2$ for every $t$. Now if we let
\[
L_i = \int_{0}^{\ell(\gamma)}|(\delta_i(t)v(t))'|\,dt < \infty,
\] 
defining
\[
\delta(t) = \sum_{i= i_0}^\infty\frac{2^{-i-1}\min(\varepsilon,1)}{1+L_i}\delta_i(t),
\]
we get a function $\delta \colon [0,\ell(\gamma)] \to \mathbb R$ such that $\delta(0)=\delta(\ell(\gamma))=0$ and $0 < \delta(t) < \varepsilon(t)$ for all $t \in (0,\ell(\gamma))$. Note that the function $\delta$ is continuous as a limit of  an absolutely and uniformly convergent series of continuous
functions, and it is differentiable except on $\ell(\gamma)/2$.
Thus, $\sigma \colon [0,\ell(\gamma)] \to \mathbb R^2$ defined by
\[
\sigma(t) = \gamma(t) + \delta(t)v(t)
\]
is a curve joining $x$ and $y$, $\sigma((0,\ell(\gamma))) \subset \Omega$ and
\begin{align*}
\ell(\sigma) &= \int_{0}^{\ell(\gamma)}|\sigma'(t)|\,dt \le \int_{0}^{\ell(\gamma)}|\gamma'(t)|\,dt + \int_{0}^{\ell(\gamma)}|(\delta(t)v(t))'|\,dt
\\
& \le \ell(\gamma) + \sum_{i=i_0}^\infty\frac{2^{-i-1}\varepsilon}{1+L_i}\int_{0}^{\ell(\gamma)}|(\delta_i(t)v(t)7)'|\,dt < \ell(\gamma) + \varepsilon,
\end{align*}
finishing the proof.
\end{proof}

The next lemma, together with Theorem \ref{thm:planardecomposition}, are the key tools for our proof of the sufficiency part of Theorem \ref{thm:planar}, that we will show afterwards.

\begin{lemma}\label{lma:planarpushing} 
 Let $\Omega \subset \mathbb R^2$ be a bounded $BV$-extension domain  and $\Omega_i$ the open connected components of $\R^2\setminus \overline{\Omega}$. Suppose that the set $H=\partial \Omega\setminus \bigcup_i \overline{\Omega}_i $ is purely $1$-unrectifiable and
 let $E \subset \mathbb R^2$ be a Jordan domain with $\partial E$ rectifiable. Then there exists a  set $\widetilde E \subset \mathbb R^2$ of finite perimeter so that
 \begin{itemize}
     \item[(i)] $E \cap \Omega = \widetilde E \cap \Omega$,
     \item[(ii)] $\mathcal H^1(\partial^M \widetilde E) \le C \mathcal H^1(\partial^M E)$, and
     \item[(iii)] $\mathcal H^{1}(\partial^M \widetilde E \cap \partial \Omega) = 0$,
 \end{itemize}
 where the constant $C$ is absolute.
\end{lemma}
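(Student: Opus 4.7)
The plan is to modify the rectifiable Jordan curve $\partial E$ by pushing each of its ``visits'' to a complement component $\overline{\Omega_i}$ inward into $\Omega_i$ via Lemma~\ref{lem:quasi.int.Omega_i}, and then to let $\widetilde E$ be (essentially) the region bounded by the modified curve. Two preliminary observations drive this: parametrizing $\partial E$ by arc length as $\gamma\colon \mathbb S^1 \to \R^2$, since $\gamma$ is $1$-Lipschitz and $H$ is purely $1$-unrectifiable, we have $\mathcal H^1(\gamma(\mathbb S^1)\cap H)=0$, so $\partial E$ can only meet $\partial\Omega$ non-trivially inside the $\overline{\Omega_i}$; and by Lemma~\ref{lem:quasi.comp.domains} each $\Omega_i$ is a Jordan domain, which makes Lemma~\ref{lem:quasi.int.Omega_i} applicable inside it.

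For each $i$, I would decompose the closed set $\gamma^{-1}(\overline{\Omega_i})\subset \mathbb S^1$ into its maximal closed arcs $\{I_{i,j}\}_j$. For each non-degenerate $I_{i,j}=[a_{i,j},b_{i,j}]$ maximality forces $\gamma(a_{i,j}),\gamma(b_{i,j})\in\partial\Omega_i$. Applying Lemma~\ref{lem:quasi.int.Omega_i} inside the Jordan domain $\Omega_i$ to $\gamma|_{I_{i,j}}$ with a tolerance $\varepsilon_{i,j}$, chosen so that $\sum_{i,j}\varepsilon_{i,j} \le \mathcal H^1(\partial E)$, produces a curve
\[
\sigma_{i,j} \subset \Omega_i \cup \{\gamma(a_{i,j}),\gamma(b_{i,j})\}
\]
joining the same endpoints with $\ell(\sigma_{i,j}) \le \ell(\gamma|_{I_{i,j}}) + \varepsilon_{i,j}$. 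Splicing each $\sigma_{i,j}$ in place of $\gamma|_{I_{i,j}}$ yields a continuous closed curve $\widetilde\gamma\colon \mathbb S^1 \to \R^2$ of length at most $2\mathcal H^1(\partial E)$, whose image meets $\partial\Omega$ only in $(\gamma(\mathbb S^1)\cap H)\cup\bigcup_{i,j}\{\gamma(a_{i,j}),\gamma(b_{i,j})\}$, a set of $\mathcal H^1$-measure zero.

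I would then define $\widetilde E$ by requiring $\widetilde E\cap\Omega = E\cap\Omega$ and, in each $\Omega_i$, picking as $\widetilde E\cap\Omega_i$ the union of those components of $\Omega_i\setminus\widetilde\gamma(\mathbb S^1)$ whose ``side'' agrees with the $E$-side of the corresponding original visit. With this definition, (i) is tautological; (ii) follows because $\partial^M\widetilde E$ is contained in $\widetilde\gamma(\mathbb S^1)$ up to an $\mathcal H^1$-null set, so $\mathcal H^1(\partial^M\widetilde E) \le \ell(\widetilde\gamma) \le 2\mathcal H^1(\partial^M E)$; and (iii) is immediate from $\mathcal H^1(\widetilde\gamma(\mathbb S^1)\cap\partial\Omega)=0$.

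The main obstacle I expect is making the matching of components in each $\Omega_i$ precise. In general $\widetilde\gamma$ will self-intersect, since Lemma~\ref{lem:quasi.int.Omega_i} does not produce simple curves and distinct visits to the same $\Omega_i$ may be pushed on top of each other, so the components of $\Omega_i\setminus\widetilde\gamma(\mathbb S^1)$ need not biject with the original ones. I would handle this by first subdividing each visit so that $\gamma|_{I_{i,j}}$ is simple, then choosing $\varepsilon_{i,j}$ so small that $\sigma_{i,j}$ stays in a tubular neighborhood of $\gamma|_{I_{i,j}}$ inside $\overline{\Omega_i}$, and finally using the resulting homotopy relative to endpoints between them to transport the $E$-status region by region. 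An alternative would be to bypass $\widetilde\gamma$ altogether: apply Theorem~\ref{thm:planardecomposition} to $E\cap\overline{\Omega_i}$ to decompose it into indecomposable pieces bounded by rectifiable Jordan curves, and push each such Jordan piece inward separately.
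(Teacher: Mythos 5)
Your overall strategy---reroute each visit of $\partial E$ to $\overline{\Omega_i}$ into the interior of $\Omega_i$ via Lemmas \ref{lem:quasi.comp.domains} and \ref{lem:quasi.int.Omega_i}, and use pure unrectifiability of $H$ to dispose of $\partial E\cap H$---is the same as the paper's. But there is a genuine gap in the very first step: you decompose the closed set $\gamma^{-1}(\overline{\Omega_i})\subset\mathbb S^1$ into its \emph{maximal closed arcs} and only reroute along those. A closed subset of $\mathbb S^1$ need not be covered by its non-degenerate connected components: $\gamma^{-1}(\overline{\Omega_i})$ can contain a totally disconnected part of positive measure whose image is a positive-$\mathcal H^1$-measure subset of $\partial\Omega_i$. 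Concretely, let $\Omega=B(0,2)\setminus\overline{Q}$ with $Q=(-1,1)^2$, so that $\Omega_1=Q$ and $\partial\Omega_1$ contains the segment $[-1,1]\times\{1\}$; let $C\subset[-1,1]$ be a fat Cantor set and let $\partial E$ near this segment be the graph of $1+h$ for a Lipschitz $h\ge0$ vanishing exactly on $C$, with $E$ above the graph. Then locally $\gamma^{-1}(\overline{\Omega_1})=\{h=0\}=C$ has no non-degenerate components, your construction leaves $\partial E$ untouched near $C\times\{1\}$, and $\partial^M\widetilde E\cap\partial\Omega$ keeps positive measure, so (iii) fails. Your claim that $\widetilde\gamma(\mathbb S^1)$ meets $\partial\Omega$ only in $(\partial E\cap H)$ plus countably many endpoints is therefore false in general.

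The paper's selection of arcs is designed exactly to avoid this: it does \emph{not} take components of the preimage. For $\mathcal H^1$-a.e.\ $t\in f^{-1}(\partial^M E\cap\partial\Omega_i)$ a Lebesgue-density argument produces small arcs $B(t,r)$ with $\mathcal H^1(f(B(t,r))\cap\partial\Omega_i)\ge\tfrac12\mathcal H^1(f(B(t,r)))$, and a Vitali covering extracts disjoint arcs $I_{i,j}$ of this type whose images cover $\partial^M E\cap\partial\Omega_i$ up to a null set. The \emph{entire} visit $f(I_{i,j})$---including any Cantor-type contact set---is then capped off by one curve $\gamma_{i,j}\subset\Omega_i$ joining its first and last contact points, while the subarcs of $f(I_{i,j})$ that stray outside $\overline{\Omega_i}$ are restored by further shortcuts $\gamma_{i,j,k}\subset\Omega_i$; the modification is written explicitly as $\widetilde E_{i,j}=E\cup\bigl(F_{i,j}\setminus\bigcup_k E_{i,j,k}\bigr)$, which also sidesteps your second worry about reading $\widetilde E$ off a self-intersecting closed curve. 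The density property $\ge\tfrac12$ is then what makes the added lengths summable, since $\ell(\gamma_{i,j})\le(C+1)|z_{i,j}-w_{i,j}|\le(C+1)\mathcal H^1(f(I_{i,j}))\le 2(C+1)\mathcal H^1(f(I_{i,j})\cap\partial\Omega_i)$ and the sets $f(I_{i,j})\cap\partial\Omega_i$ are essentially disjoint. Your $\varepsilon$-budget bookkeeping for lengths is fine as far as it goes, but it cannot substitute for this coverage step; nor does your proposed alternative via Theorem \ref{thm:planardecomposition} help, since the same Cantor-type contact persists for each Jordan piece.
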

\begin{proof}
Consider the at most countably many components $\{\Omega_i\}_i$ of $\R^2 \setminus \overline{\Omega}$. For each $i$ we want to modify the set $E$ in $\overline{\Omega_i}$ to get some $\widetilde E\subset\R^2$  with $\partial \widetilde E$ rectifiable so that 
 \begin{equation}\label{eq:boundarytozero}
  \mathcal{H}^1(\partial^M \widetilde E \cap \partial \Omega_i) = 0
 \end{equation}
 and
 \begin{equation}\label{eq:boundary.int.controled}
 \mathcal{H}^1(\partial^M \widetilde E \cap \Omega_i) \le C \mathcal{H}^1(\partial^M E \cap \overline{\Omega_i}) .
 \end{equation}
 Let us show how to conclude the proof of the lemma after assuming these facts. Since we are not changing the set $E$ inside $\Omega$ the property (i) is clear. To check (ii) let us first write
 $$\mathcal{H}^1(\partial ^M \widetilde E)= \mathcal{H}^1(\partial ^M \widetilde E\cap\Omega)+\mathcal{H}^1(\partial ^M \widetilde E\cap\partial \Omega)  + \mathcal{H}^1(\partial ^M \widetilde E\cap (\R^n\setminus \overline{\Omega})).$$
 We will estimate each of these terms separately. For the first one is clear that $\mathcal{H}^1(\partial ^M \widetilde E\cap\Omega)=\mathcal{H}^1(\partial ^M E\cap\Omega)$. For the second one we use the fact that $\partial \widetilde E$ is rectifiable, that $\partial \Omega\setminus \bigcup_i \overline{\Omega}_i$ is purely $1$-unrectifiable and \eqref{eq:boundarytozero},
 \begin{align}\label{eq:mes.zero.in.boundary}
     \mathcal{H}^1(\partial ^M \widetilde E\cap\partial \Omega)&=
     \mathcal{H}^1\left(\partial ^M \widetilde E\cap\left[\partial \Omega  \setminus \bigcup_i \overline{\Omega}_i\right]\right)+
     \mathcal{H}^1\left(\partial ^M \widetilde E\cap\left[\partial \Omega  \cap\bigcup_i \overline{\Omega}_i\right]\right) \nonumber \\ 
     &=\mathcal{H}^1\left(\bigcup_i(\partial ^M \widetilde E\cap \partial \Omega  \cap\overline{\Omega}_i)\right) \nonumber \\
     &\leq \sum_i \mathcal{H}^1(\partial^M \widetilde E \cap \partial \Omega_i)=0.
 \end{align}
 For the third term we use \eqref{eq:boundary.int.controled} to get
 $$\mathcal{H}^1(\partial ^M \widetilde E\cap (\R^n\setminus \overline{\Omega})) \leq \sum_i\mathcal{H}^1(\partial^M \widetilde E\cap \Omega_i)\leq C \sum_i \mathcal{H}^1(\partial^M E\cap \overline{\Omega_i}) .$$
All these estimates together yield
$$ \mathcal{H}^1(\partial ^M \widetilde E)\leq \mathcal{H}^1(\partial ^M E\cap\Omega)  +C \sum_i \mathcal{H}^1(\partial^M E\cap \overline{\Omega_i}) .$$
Since $\{x\in\partial \Omega_i:\, x\in \partial \Omega_j \;\;\text{for some}\;\; j\neq i\}$ is at most countable by \eqref{eq:few.inter.Omega_i}, we conclude that
$$ \mathcal{H}^1(\partial ^M \widetilde E)\leq C\mathcal{H}^1(\partial ^M E),$$
proving (ii). Finally (iii) has already been shown in \eqref{eq:mes.zero.in.boundary}.

We now move to prove how to modify $E$ inside each set $\overline{\Omega}_i$ in order to get \eqref{eq:boundarytozero} and \eqref{eq:boundary.int.controled}.

If $\mathcal H^1(\partial^ME\cap \partial \Omega_i) = 0$, we may skip this $i$ and move to the next. Let us thus assume $\mathcal H^1(\partial^ME\cap \partial \Omega_i) > 0$.
Let $f \colon \mathbb S^1 \to \partial E$ be a parameterisation 
of the boundary by a homeomorphism.
By the Lebesgue density theorem, for almost every $t \in f^{-1}(\partial^ME\cap \partial \Omega_i)$ there exists a $r_t>0$
so that for all $0 < r < r_t$
\begin{equation}\label{eq:balllarge}
 \mathcal H^1\left(f(B(t,r))\cap \partial\Omega_i\right) \ge \frac12 \mathcal H^1(f(B(t,r))).
 \end{equation}
 By the Vitali covering lemma, we then find a disjointed collection $\{B(t_j,r_j)\}_j$ so that \eqref{eq:balllarge} holds for each of the balls and
 \[
 \mathcal H^1\left((\partial^ME\cap \partial \Omega_i) \setminus \bigcup_j f(B(t_j,r_j)\right) = 0.
 \]
 Now, we define $I_{i,j} = \overline{B(t_j,r_j)} \cap\mathbb S^1$ for each $j$ and obtain a collection $\{I_{i,j}\}_j$ of closed arcs in $\mathbb S^1$
 whose interiors are pairwise disjoint,

 \[
 \mathcal H^1\left((\partial^ME\cap \partial \Omega_i) \setminus \bigcup_j f(I_{i,j})\right) = 0
 \]
 and
 \[
 \mathcal H^1\left(f(I_{i,j})\cap \partial\Omega_i\right) \ge \frac12 \mathcal H^1(f(I_{i,j}))
 \]
 for every $j$.
 \begin{figure}
     \centering
     \includegraphics[width=0.75\textwidth]{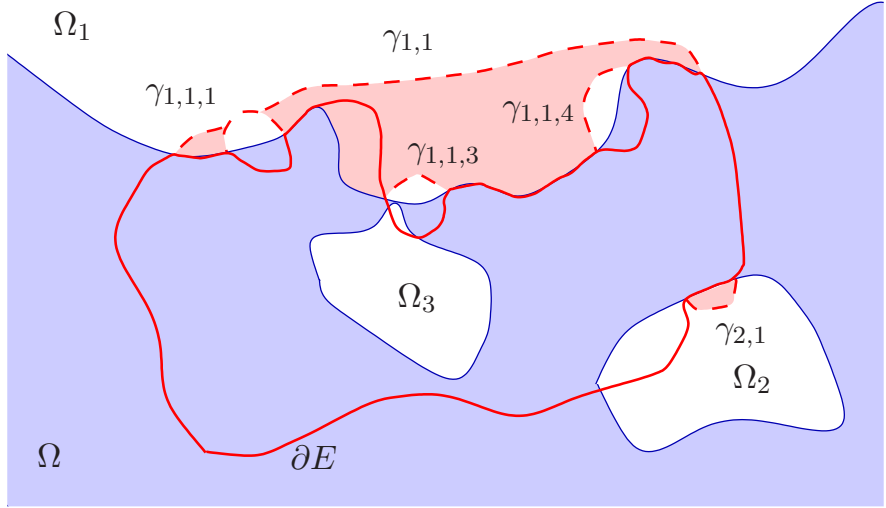}
     \caption{An illustration of the construction in Lemma \ref{lma:planarpushing}. The boundary $\partial E$ intersect the boundaries $\partial \Omega_1$ and $\partial\Omega_2$ in a set of positive $\mathcal H^1$-measure. The modification of $E$ inside $\Omega_1$ consists of the added set bounded by $\gamma_{1,1}$ from which three sets have been removed, bounded by $\gamma_{1,1,1}$, $\gamma_{1,1,3}$, and $\gamma_{1,1,4}$, respectively. The modification inside $\Omega_2$ consists of only one added part bounded by $\gamma_{2,1}$.}
     \label{fig:perturbation}
 \end{figure}
 
For the next argument we have $i,j$ fixed. The set $f(I_{i,j}) \setminus \partial \Omega_i$ consists of at most countably many open curves $\{\alpha_{i,j,k}\}_{k}$. For each $k $ for which $\alpha_{i,j,k} \cap \Omega_i = \emptyset$, we use Lemma \ref{lem:quasi.comp.domains} to find a curve $\beta_{i,j,k} \subset \overline{\Omega_i}$ such that $\ell(\beta_{i,j,k}) \le C|z_{i,j,k}-w_{i,j,k}|$ where $z_{i,j,k}$ and $w_{i,j,k}$ are the endpoints of $\alpha_{i,j,k}$. Now, for $\varepsilon=|z_{i,j,k}-w_{i,j,k}| $, Lemma \ref{lem:quasi.int.Omega_i} provides us with another curve $\gamma_{i,j,k}\subset \Omega_i\cup \{z_{i,j,k},w_{i,j,k}\}$ so that
\begin{equation}
    \ell(\gamma_{i,j,k})\leq \ell(\beta_{i,j,k})+|z_{i,j,k}-w_{i,j,k}|\leq (C+1)|z_{i,j,k}-w_{i,j,k}|.
\end{equation}
The curves $\alpha_{i,j,k}$ and $\gamma_{i,j,k}$ enclose a bounded subset that we call $E_{i,j,k} \subset \mathbb R^2$. 
Similarly, if we let $z_{i,j}$ be the first, and $w_{i,j}$ the last point of $f(I_{i,j}) \cap \partial \Omega_i$  we again use Lemmas \ref{lem:quasi.comp.domains} and \ref{lem:quasi.int.Omega_i}  to connect $z_{i,j}$ to $w_{i,j}$ with a curve $\gamma_{i,j} \subset \Omega_i \cup\{z_{i,j},w_{i,j}\}$ so that 
\begin{equation}
\ell(\gamma_{i,j}) \le (C+1)|z_{i,j}-w_{i,j}|.
\end{equation}
Let $F_{i,j}$ be the bounded set enclosed by $\partial \Omega_i$ (from $z_{i,j}$ to $w_{i,j}$) and by $\gamma_{i,j}$. Now, we will modify $E$ by considering
 $$\widetilde E_{i,j}=E\cup \left( F_{i,j} \setminus \bigcup_{k} E_{i,j,k} \right) .$$
 See Figure \ref{fig:perturbation} for an illustration of the modification.
 
 Repeating this process for all $i$ with $\mathcal H^1(\partial^ME\cap \partial \Omega_i) > 0$ and all $j$ we can finally define
 $$\widetilde E=\bigcup_{i,j}\widetilde E_{i,j} .$$
 Let us check that the properties $\eqref{eq:boundarytozero}$ and $\eqref{eq:boundary.int.controled}$ hold.
 Firstly, observing that we did not modified $\partial E$ outside the arcs $f(I_{i,j})$,
 \begin{align*}
     \mathcal{H}^1(\partial^M \widetilde E\cap \partial\Omega_i)&=\mathcal{H}^1\left((\partial^M \widetilde E\cap \partial \Omega_i)\setminus\bigcup_j f(I_{i,j})\right)\\
     &\qquad +\sum_j\mathcal{H}^1(\partial^M \widetilde E \cap \partial\Omega_i\cap f(I_{i,j})  )\\
     &=\mathcal{H}^1\left((\partial^M  E\cap \partial \Omega_i)\setminus\bigcup_j f(I_{i,j})\right)\\
     &\qquad+\sum_j \left(\mathcal{H}^{1}(\partial^M F_{i,j} \cap \partial\Omega_i) +\sum_k  \mathcal{H}^{1}(\partial^M E_{i,j,k} \cap \partial\Omega_i)\right) \\
     &=0,
 \end{align*}
 which gives us \eqref{eq:boundarytozero}.
 Secondly,
 \begin{align*}
     \mathcal{H}^1(\partial^M \widetilde E \cap \Omega_i) &\leq \mathcal{H}^1(\partial^M E\cap\Omega_i)+\sum_j\left(    
     \mathcal{H}^1(\gamma_{i,j})+\sum_k \mathcal{H}^1(\gamma_{i,j,k})\right) \\
     &\leq \mathcal{H}^1(\partial^M E\cap\Omega_i)+\sum_j\left(    
     (C+1)|z_{i,j}-w_{i,j}|+\sum_k (C+1)|z_{i,j,k}-w_{i,j,k}|\right) \\
     &\leq  \mathcal{H}^1(\partial^M E\cap\Omega_i)+\sum_j 2C \mathcal{H}^1(f(I_{i,j})) \\
     &\leq \mathcal{H}^1(\partial^M E\cap\Omega_i)+\sum_j 2(C+1) \mathcal{H}^1(f(I_{i,j})\cap \partial \Omega_i) \\
     &\le C \mathcal{H}^1(\partial^M E \cap \overline{\Omega}_i)
 \end{align*}
 proving \eqref{eq:boundary.int.controled}. 
 \end{proof}

\begin{proof}[Proof of Theorem \ref{thm:planar}]
One direction is proven in Corollary \ref{cor:unrectifiabilitynecessity}. Thus we only need to prove the converse. Thus, assume that 
$\Omega \subset \mathbb R^2$ is a bounded $BV$-extension domain and  that the set $H=\partial \Omega\setminus \bigcup_i \overline{\Omega}_i $ is purely $1$-unrectifiable, where $\Omega_i$ are the open connected components of $\R^2\setminus \overline{\Omega}$.

 We will show that $\Omega$ has the strong extension property for sets of finite perimeter and hence, by Theorem \ref{thm:mainRn}, $\Omega$ will be a $W^{1,1}$-extension domain. Using the fact that $\Omega$ is a bounded $BV$-extension domain if we let $E \subset \Omega$ be a set of finite perimeter in $\Omega$ then there exists an extension  $E'$  to $\mathbb R^2$ so that $P(E',\mathbb R^2) \le C P(E,\Omega)$. This extension can be obtained for instance by the Maz'ya and Burago result \cite[Section 9.3]{mazya}.  
 
 Let now $\{C_i^+, C_k^-\,:\,i,k \in \mathbb N\}$ be the rectifiable Jordan curves of Theorem \ref{thm:planardecomposition} for the set $E'$. By applying Lemma \ref{lma:planarpushing},  each Jordan domain $\text{int}(C_i^+)$ can be replaced by a  set $\widetilde E_i^+$ so that
 $\widetilde E_i^+ \cap \Omega = \text{int}(C_i^+) \cap \Omega$, $\mathcal H^1(\partial^M \widetilde E_i^+) \le C \mathcal H^1(C_i^+)$,
 and $\mathcal H^1(\partial^M\widetilde E_i^+ \cap \partial \Omega) = 0$.
 Similarly, each   $\text{int}(C_k^-)$ can be replaced by a set $\widetilde E_k^-$ so that
 $\widetilde E_k^- \cap \Omega = \text{int}(C_k^-) \cap \Omega$, $\mathcal H^1(\partial ^M\widetilde E_k^-) \le C \mathcal H^1(C_k^-)$, and
 $\mathcal H^1(\partial^M\widetilde E_k^- \cap \partial \Omega) = 0$.

 Now, 
 \[
 E = E' \cap \Omega = \left(\bigcup_{i}\text{int}(C_i^+) \setminus \bigcup_k \text{int}(C_k^-) \right) \cap \Omega = \left(\bigcup_{i}\widetilde E_i^+ \setminus \bigcup_k \widetilde E_k^- \right) \cap \Omega,
 \]
holds modulo a measure zero set. Thus, the set
 \[
 \widetilde E = \left(\bigcup_{i}\widetilde E_i^+ \setminus \bigcup_k \widetilde E_k^- \right)
 \]
 is an extension of $E$ to $\mathbb R^2$,
 and
 \begin{align*}
 P(\widetilde E,\mathbb R^2) & \le \sum_{i}\mathcal H^1(\partial^M\widetilde E_i^+) + \sum_k \mathcal H^1(\partial^M\widetilde E_k^-)\\
 & \le \sum_{i}C\mathcal H^1(C_i^+) + \sum_k C\mathcal H^1(C_k^-)\\
 & = CP(E',\mathbb R^2) \le C P(E,\Omega).
 \end{align*}
 Since,
 \[
 \mathcal H^1(\partial^M \widetilde E \cap \partial \Omega) \le \sum_i
 \mathcal H^1(\partial^M \widetilde E_i^+ \cap \partial \Omega) + 
 \sum_k
 \mathcal H^1(\partial^M \widetilde E_k^- \cap \partial \Omega) = 0,
 \]
 the set $\widetilde E$ is the strong extension of $E$ that we had to find.
\end{proof}

\section*{Acknowledgements}
The authors thank Panu Lahti for several comments on an earlier version of this paper.

\end{document}